\author{A.A. Vasil'eva\footnote{E-mail address: vasilyeva\_nastya@inbox.ru}}
\title{An embedding theorem for weighted Sobolev classes on a John domain:
case of weights that are functions of a distance to a certain
$h$-set}
\date{}
\begin{document}

\maketitle

\newenvironment{Biblio}{%
                  \renewcommand{\refname}{\footnotesize REFERENCES}%
                  }

\renewcommand{\le}{\leqslant}
\renewcommand{\ge}{\geqslant}
\newcommand{\sgn}{\mathrm {sgn}\,}
\newcommand{\inter}{\mathrm {int}\,}
\newcommand{\dist}{\mathrm {dist}}
\newcommand{\supp}{\mathrm {supp}\,}
\newcommand{\R}{\mathbb{R}}
\renewcommand{\C}{\mathbb{C}}
\newcommand{\Z}{\mathbb{Z}}
\newcommand{\N}{\mathbb{N}}
\newcommand{\Q}{\mathbb{Q}}
\theoremstyle{plain}
\newtheorem{Trm}{Theorem}
\newtheorem{trma}{Theorem}
\newtheorem{Def}{Definition}
\newtheorem{Cor}{Corollary}
\newtheorem{Lem}{Lemma}
\newtheorem{Rem}{Remark}
\newtheorem{Sta}{Proposition}
\renewcommand{\proofname}{\bf Proof}
\renewcommand{\thetrma}{\Alph{trma}}

\section{Introduction}

Let $\Omega \subset \R^d$ be a bounded domain (an open connected
set), and let $g$, $v:\Omega\rightarrow \R_+$ be measurable
functions. For each measurable vector-valued function $\varphi:\
\Omega\rightarrow \R^m$, $\varphi=(\varphi_k) _{1\le k\le m}$, and
for each $p\in [1, \, \infty]$ we put
$$
\|\varphi\|_{L_p(\Omega)}= \Big\|\max _{1\le k\le m}|\varphi _k |
\Big\|_p.
$$
Let $\overline{\beta}=(\beta _1, \, \dots, \, \beta _d)\in
\Z_+^d:=(\N\cup\{0\})^d$, $|\overline{\beta}| =\beta _1+
\ldots+\beta _d$. For any distribution $f$ defined on $\Omega$ we
write $\displaystyle \nabla ^r\!f=\left(\partial^{r}\! f/\partial
x^{\overline{\beta}}\right)_{|\overline{\beta}| =r}$ (here partial
derivatives are taken in the sense of distributions), and denote
by $m_r$ the number of components of the vector-valued
distribution $\nabla ^r\!f$. Set
$$
W^r_{p,g}(\Omega)=\left\{f:\ \Omega\rightarrow \R\big| \; \exists
\varphi :\ \Omega\rightarrow \R^{m_r}\!:\ \| \varphi \|
_{L_p(\Omega)}\le 1, \, \nabla ^r\! f=g\cdot \varphi\right\}
$$
\Big(we denote the corresponding function $\varphi$ by
$\displaystyle\frac{\nabla ^r\!f}{g}$\Big),
$$
\| f\|_{L_{q,v}(\Omega)}{=}\| f\|_{q,v}{=}\|
fv\|_{L_q(\Omega)},\qquad L_{q,v}(\Omega)=\left\{f:\Omega
\rightarrow \R| \; \ \| f\| _{q,v}<\infty\right\}.
$$
We call the set $W^r_{p,g}(\Omega)$ a weighted Sobolev class.

For properties of weighted Sobolev spaces and their
generalizations, see the books \cite{triebel, kufner, turesson,
edm_trieb_book, triebel1, edm_ev_book} and the survey paper
\cite{kudr_nik}. Sufficient conditions for boundedness and
compactness of embeddings of weighted Sobolev spaces into weighted
$L_q$-spaces were obtained by Kudryavtsev \cite{kudrjavcev},
Ne\v{c}as \cite{j_necas}, Kufner \cite{kuf_cz, kufner_69, kufner},
Yakovlev \cite{g_yakovlev}, Triebel \cite{triebel}, Lizorkin and
Otelbaev \cite{liz_otel}, Gurka and Opic \cite{gur_opic1,
gur_opic2, gur_opic3}, Besov \cite{besov1, besov2, besov3,
besov4}, Antoci \cite{antoci}, Gol’dshtein and Ukhlov
\cite{gold_ukhl}, and other authors. Notice that in these papers
weighted Sobolev classes were defined as $W^r_{p,g}(\Omega)\cap
L_{p,w}(\Omega)$ for some weight $w$, or as $\cap _{l=0}^r
W^l_{p,g_i}(\Omega)$ for some different weight functions $g_i$.

For a Lipschitz domain $\Omega$, a $k$-dimensional manifold
$\Gamma\subset \Omega$, and for weights depending only on distance
from $x$ to $\Gamma$, the following results were obtained. The
case $r=1$, $p=q$ was considered in papers of Ne\v{c}as
\cite{j_necas} (the case of power weights and $\Gamma=\partial
\Omega$), Kufner \cite{kuf_cz} (weights are powers of distance
from a fixed point), Yakovlev \cite{g_yakovlev} (weights depend on
distance to $k$-dimensional manifold), Kadlec and Kufner
\cite{kadl1, kadl2} (here weights are powers with a logarithmic
factor, $\Gamma=\partial \Omega$), Kufner \cite{kufner_69} (here
weights are arbitrary functions of distance from $\partial
\Omega$). For $p=q$, $r\in \N$, $\Gamma=\partial \Omega$ and for
power type weights, the embedding theorem was obtained by El Kolli
\cite{el_kolli}. By using Banach space interpolation, Triebel
\cite{trieb_mat_sb} extended this result to the case $p\le q$. For
$p=q$, $r=1$, a $k$-dimensional manifold $\Gamma$ and general
weights Kufner and Opic \cite{kuf_op} obtained some sufficient
conditions for compactness of embeddings. For $p>q$, $r\in \N$,
for an arbitrary $k$-dimensional manifold $\Gamma$ and power type
weights the criterion of the embedding was obtained in\cite{jain1,
jain2, jain3}. In addition, in \cite{jain3} for $r=1$ the
criterion was obtained for arbitrary functions depending on
distance from the manifold $\Gamma$.

Notice that for $p\ge q$ in the proof of embedding theorems
two-weighted Hardy-type inequalities were applied.

In \cite{gur_opic1} sufficient conditions for the embedding were
obtained for $r=1$ and general weights. The norm in the weighted
Sobolev space was defined by $\|f\|_{g,w}=\left\|\frac{\nabla
f}{g}\right\|_{L_p(\Omega)}+\|wf\|_{L_p(\Omega)}$. The idea of the
proof was the following. First the Besikovic covering of $\Omega$
was constructed, then for each ball of this covering the Sobolev
embedding theorem was applied. After that the obtained estimates
were summarized. Here it was essential to use the second weight
$w$, which satisfied rather tight restricti\-ons. If the boundary
$\partial \Omega$ is Lipshitz and weight functions are powers of
distance from $\partial \Omega$, then it is possible to take more
weak restrictions on $w$. To this end, the other method of proof
is used (employing the Hardy inequality). In \cite{gur_opic2}
embedding theorems were obtained for a H\"{o}lder domain $\Omega$
and power type weights depending on distance from $\partial
\Omega$.

It is also worth noting the paper \cite{j_lehrback}, where the
result on embedding of $\mathaccent'27 W^1_{p,g}(\Omega)$ into
$L_{p,v}(\Omega)$ was obtained for $r=1$, $p=q$ and weights that
are powers of the distance from the irregular boundary of
$\partial \Omega$.

In the present paper, we consider a John domain $\Omega$, an
$h$-set $\Gamma\subset \partial \Omega$ and weight functions
depending on distance from $\Gamma$ (their form will be written
below).

Let $X$, $Y$ be sets, $f_1$, $f_2:\ X\times Y\rightarrow \R_+$. We
write $f_1(x, \, y)\underset{y}{\lesssim} f_2(x, \, y)$ (or
$f_2(x, \, y)\underset{y}{\gtrsim} f_1(x, \, y)$) if, for any
$y\in Y$, there exists $c(y)>0$ such that $f_1(x, \, y)\le
c(y)f_2(x, \, y)$ for each $x\in X$; $f_1(x, \,
y)\underset{y}{\asymp} f_2(x, \, y)$ if $f_1(x, \, y)
\underset{y}{\lesssim} f_2(x, \, y)$ and $f_2(x, \,
y)\underset{y}{\lesssim} f_1(x, \, y)$.

For $x\in \R^d$ and $a>0$ we shall denote by  $B_a(x)$ the closed
Euclidean ball of radius $a$ in $\R^d$ centered at the point $x$.

Let $|\cdot|$ be an arbitrary norm on $\R^d$, and let $E, \,
E'\subset \R^d$, $x\in \R^d$. We set
$$
{\rm diam}_{|\cdot|}\, E=\sup \{|y-z|:\; y, \, z\in E\}, \;\; {\rm
dist}_{|\cdot|}\, (x, \, E)=\inf \{|x-y|:\; y\in E\},
$$
$$
{\rm dist}_{|\cdot|}\, (E', \, E)=\inf \{|x-y|:\; x\in E, \, y\in
E'\}.
$$

\begin{Def}
\label{fca} Let $\Omega\subset\R^d$ be a bounded domain, and let
$a>0$. We say that $\Omega \in {\bf FC}(a)$ if there exists a
point $x_*\in \Omega$ such that, for any $x\in \Omega$, there
exists a curve $\gamma _x:[0, \, T(x)] \rightarrow\Omega$ with the
following properties:
\begin{enumerate}
\item $\gamma _x\in AC[0, \, T(x)]$, $|\dot \gamma _x|=1$ a.e.,
\item $\gamma _x(0)=x$, $\gamma _x(T(x))=x_*$,
\item $B_{at}(\gamma _x(t))\subset \Omega$ for any $t\in [0, \, T(x)]$.
\end{enumerate}
\end{Def}

\begin{Def}
We say that $\Omega$ satisfies the John condition (and call
$\Omega$ a John domain) if $\Omega\in {\bf FC}(a)$ for some $a>0$.
\end{Def}
For a bounded domain, the John condition is equivalent to the
flexible cone condition (see the definition in \cite{besov_il1}).
Reshetnyak in the papers \cite{resh1, resh2} constructed the
integral representation for functions defined on a John domain
$\Omega$ in terms of their derivatives of order $r$. This integral
representation yields that in the case $\frac rd-\left(\frac
1p-\frac 1q\right)_+>0$ the class $W^r_p(\Omega)$ is compactly
embedded in the space $L_q(\Omega)$ (i.e., the conditions of the
compact embedding are the same as for $\Omega=[0, \, 1]^d$).

\begin{Rem}
\label{rem2} If $\Omega\in {\bf FC}(a)$ and a point $x_*$ is such
as in Definition \ref{fca}, then
\begin{align}
\label{diam_dist} {\rm diam}_{|\cdot|}\, \Omega
\underset{d,a,|\cdot|}{\lesssim} {\rm dist}_{|\cdot|}\, (x_*, \,
\partial \Omega).
\end{align}
\end{Rem}

Denote by $\mathbb{H}$ the set of all non-decreasing positive
functions defined on $(0, \, 1]$.

We introduce the concept of $h$-set according to
\cite{m_bricchi1}.
\begin{Def}
\label{h_set} Let $\Gamma\subset \R^d$ be a compact set, and let
$h\in \mathbb{H}$. We say that $\Gamma$ is an $h$-set if there
exists a finite measure $\mu$ on $\R^d$ such that $\supp
\mu=\Gamma$ and $\mu(B_t(x))\asymp h(t)$ for each $x\in \Gamma$,
$t\in (0, \, 1]$.
\end{Def}
Notice that the measure $\mu$ is non-negative.

The concept of $h$-sets for functions $h$ of a special type
appeared earlier (see papers of Edmunds, Triebel and Moura
\cite{tr_fract, et1, et2, s_moura}). In these and some other
papers (see, for example, \cite{m_bricchi, triebel_fractal,
i_piotr, i_piotr1, caet_lop}) properties of the operator ${\rm
tr}|_{\Gamma}$ in Besov and Triebel -- Lizorkin spaces and its
composition with the operator $(\Delta)^{-1}$ were studied. Here
${\rm tr}|_{\Gamma}$ is the operator of restriction on the $h$-set
$\Gamma$. In \cite{har_piot} Besov spaces with Muckenhoupt weights
were studied; weight functions depending on the distance from a
certain $h$-set were considered as examples.

In the sequel we suppose that
\begin{align}
\label{def_h} h(t)=t^{\theta}\Lambda(t), \;\;\; 0\le \theta<d,
\end{align}
where $\Lambda:(0, \, +\infty)\rightarrow (0, \, +\infty)$ is an
absolutely continuous function such that
\begin{align}
\label{yty} \frac{t\Lambda'(t)}{\Lambda(t)} \underset{t\to+0}{\to}
0.
\end{align}

Let $\Omega\in {\bf FC}(a)$ be a bounded domain, and let
$\Gamma\subset \partial \Omega$ be an $h$-set. In the sequel for
convenience we suppose that $\Omega\subset \left[-\frac 12, \,
\frac 12\right]^d$ (the general case can be reduced to this case).
Let $1<p\le \infty$, $1\le q<\infty$, $r\in \N$, $\delta:=r+\frac
dq-dp>0$, $g(x)=\varphi_g({\rm dist}_{|\cdot|}(x, \, \Gamma))$,
$v(x)=\varphi_v({\rm dist}_{|\cdot|}(x, \, \Gamma))$,
\begin{align}
\label{ghi_g0} \varphi_g(t)=t^{-\beta_g}\Psi_g(t), \;\;
\varphi_v(t)=t^{-\beta_v} \Psi_v(t),
\end{align}
with absolutely continuous functions $\Psi_g$, $\Psi_v$ such that
\begin{align}
\label{psi_cond} \frac{t\Psi'_g(t)}{\Psi_g(t)}
\underset{t\to+0}{\to}0, \;\; \frac{t\Psi'_v(t)}{\Psi_v(t)}
\underset{t\to+0}{\to}0;
\end{align}
in addition, we suppose that
\begin{align}
\label{muck} -\beta_vq+d-\theta>0.
\end{align}
Also we assume that
\begin{align}
\label{beta} \text{a) }\beta_g+\beta_v<\delta-\theta\left(\frac
1q-\frac 1p\right)_+ \quad\text{ or b) }\beta_g+\beta_v=\delta-
\theta\left(\frac 1q-\frac 1p\right)_+.
\end{align}
In the case b) we suppose that
\begin{align}
\label{phi_g} \Lambda(t)=|\log t|^\gamma\tau(|\log t|), \;\;
\Psi_g(t)=|\log t|^{-\alpha_g} \rho_g(|\log t|),\;\;
\Psi_v(t)=|\log t|^{-\alpha_v} \rho_v(|\log t|),
\end{align}
functions $\rho_g$, $\rho_v$, $\tau$ are absolutely continuous,
\begin{align}
\label{ll} \lim \limits _{y\to +\infty}\frac{y\tau'(y)}{\tau(y)}=
\lim \limits _{y\to +\infty}\frac{y\rho_g'(y)}{\rho_g(y)}=\lim
\limits _{y\to +\infty}\frac{y\rho_v'(y)}{\rho_v(y)}=0,
\end{align}
\begin{align}
\label{g0ag} \gamma<0\text{ and }\alpha:=\alpha_g+\alpha_v
>(1-\gamma)\left(\frac 1q-\frac 1p\right)_+.
\end{align}
It is easy to show that the functions $\Lambda$, $\Psi_g$ and
$\Psi_v$ satisfy (\ref{yty}) and (\ref{psi_cond}).
\begin{Rem}
\label{ste} If functions $\Psi_g$ and $\Psi_v$ (respectively
$\rho_g$ and $\rho_v$) satisfy (\ref{psi_cond}) (respectively
(\ref{ll})), then their product and each degree of these functions
satisfies the similar condition.
\end{Rem}

Denote $$\beta=\beta_g+\beta_v, \quad \rho(y)=\rho_g(y)\rho_v(y),
\quad \Psi(y)=\Psi_g(y)\Psi_v(y),$$ $\mathfrak{Z}=(r,\, d, \, p,
\, q, \, \beta_g, \, \beta_v, \, \theta, \, \Lambda, \, \Psi_g, \,
\Psi_v, \, a)$.

Let ${\cal P}_{r-1}(\R^d)$ be the space of polynomials on $\R^d$
of degree not exceeding $r-1$. For a measurable set $E\subset
\R^d$, we put ${\cal P}_{r-1}(E)= \{f|_E:\, f\in {\cal
P}_{r-1}(\R^d)\}$.
\begin{Trm}
\label{main} For any function $f\in \operatorname{span}\,
W^r_{p,g}(\Omega)$ there exists a polynomial $Pf\in {\cal
P}_{r-1}(\Omega)$ such that
$$
\|f-Pf\|_{L_{q,v}(\Omega)}\underset{\mathfrak{Z}}{\lesssim}
\left\|\frac{\nabla^rf}{g}\right\|_{L_p(\Omega)}.
$$
Here the mapping $f\mapsto Pf$ can be extended to a linear
continuous operator $P:L_{q,v}(\Omega)\rightarrow {\cal
P}_{r-1}(\Omega)$.
\end{Trm}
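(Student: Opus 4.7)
The plan is to realize $Pf$ as a polynomial projection determined by a fixed ball deep inside $\Omega$, to bound $f-Pf$ pointwise by a Riesz-type potential of $\nabla^r f$ via the John condition, and then to reduce the weighted $L_q$-estimate to a summation over dyadic shells around $\Gamma$ whose geometry is controlled by the $h$-set property of $\Gamma$.

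First I would fix a ball $B_*=B_\rho(x_*)\subset\Omega$ with $\rho\asymp {\rm dist}_{|\cdot|}(x_*,\partial\Omega)$; by Remark~\ref{rem2} such a $\rho$ is also $\asymp {\rm diam}_{|\cdot|}\,\Omega$. Since $\Gamma\subset\partial\Omega$, both $v$ and $1/g$ are bounded on $B_*$. Define $Pf$ as the polynomial of degree $<r$ obtained by $L_2(B_*)$-projection, equivalently, by integration of $f$ against a fixed dual basis of ${\cal P}_{r-1}$ supported in $B_*$. This $P$ is a bounded linear map $L_{q,v}(\Omega)\to {\cal P}_{r-1}(\Omega)$ and reproduces polynomials of degree $<r$.

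Next I would invoke Reshetnyak's integral representation on John domains (\cite{resh1,resh2}): for every $x\in\Omega$, using the curve $\gamma_x$ and the flexible tube $T_x=\bigcup_{t\in[0,T(x)]}B_{at}(\gamma_x(t))$, one obtains
$$
|f(x)-Pf(x)|\lesssim \int_{T_x}\frac{|\nabla^r f(y)|}{|x-y|^{d-r}}\,dy.
$$
Writing $\nabla^r f=g\cdot\varphi$ with $\|\varphi\|_{L_p(\Omega)}\le 1$ then reduces the theorem to the boundedness $L_p(\Omega)\to L_q(\Omega,v^q\,dx)$ of the integral operator $K\varphi(x)=v(x)\int_\Omega |x-y|^{r-d}g(y)\varphi(y)\chi_{T_x}(y)\,dy$. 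I would then dyadically decompose $\Omega=\bigsqcup_{k\ge 0}\Omega_k$ with $\Omega_k=\{x\in\Omega:2^{-k-1}<{\rm dist}_{|\cdot|}(x,\Gamma)\le 2^{-k}\}$ and cover each shell by Whitney-type cubes of side $\asymp 2^{-k}$. The $h$-set hypothesis forces $\Gamma$ to be covered by $\asymp h(2^{-k})^{-1}$ balls of radius $2^{-k}$, so $|\Omega_k|\lesssim 2^{-k(d-\theta)}/\Lambda(2^{-k})$, while \eqref{ghi_g0} and \eqref{psi_cond} give $g\asymp \varphi_g(2^{-k})$ and $v\asymp \varphi_v(2^{-k})$ on cubes of $\Omega_k$. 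After a standard chaining argument along $T_x$, the operator $K$ is essentially a weighted double sum over scales $(j,k)$ of classical local Sobolev--Poincar\'e operators. Minkowski's inequality when $p\le q$, or a discrete two-weight Hardy inequality on $\mathbb{N}$ when $p>q$ (the latter being the tool referenced in the introduction), reduces the bound to convergence of a scalar series whose critical exponent is $\delta-\beta-\theta(1/q-1/p)_+$.

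In case (a) this series is a geometric progression with ratio bounded away from $1$, so convergence is immediate. The main obstacle is case (b), the critical exponent, where the leading dyadic powers cancel and only the slowly varying factors $\Lambda,\Psi_g,\Psi_v$ --- encoded in \eqref{phi_g} and governed by \eqref{ll} with $\gamma<0$ --- decide convergence. Here I would substitute those expressions, carry out Abel summation to transfer the logarithmic weights, and verify that the resulting series converges precisely under the hypothesis $\alpha>(1-\gamma)(1/q-1/p)_+$ of \eqref{g0ag}. Performing this borderline calculation uniformly across the regimes $p\le q$ and $p>q$, while correctly tracking the $h$-set codimension $\theta$, is the step I expect to be the most technically delicate.
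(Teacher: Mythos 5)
Your high-level outline is pointing in the right direction --- Reshetnyak's representation, reduction to a polynomial-free estimate, geometric control via the Whitney decomposition and the $h$-set covering numbers, and a weighted Hardy inequality at the discrete level --- and your estimate $|\Omega_k|\lesssim 2^{-k(d-\theta)}/\Lambda(2^{-k})$ is the correct consequence of the $h$-set hypothesis. But there is a genuine gap in the central step, and it is precisely the gap that the technical core of the paper (Sections 4--6) is built to fill.

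You reduce the problem to ``a weighted double sum over scales $(j,k)$'' and then to ``a discrete two-weight Hardy inequality on $\mathbb{N}$.'' The difficulty is that the index set is not $\mathbb{N}$: the Whitney decomposition of a John domain $\Omega$, partitioned by distance to $\Gamma\subset\partial\Omega$, has the combinatorial structure of a \emph{tree}, and for each $x$ the flexible tube $T_x$ traces out a single root-to-leaf path in that tree. Summing over $x$ therefore forces one to prove a two-weight Hardy inequality \emph{on a tree}, not on a chain; the condition for such an inequality (the Evans--Harris--Pick criterion, Theorem~\ref{cr_har}, or its discrete analogue Lemma~\ref{hardy_cr}) is a supremum over all rooted subtrees $\mathcal{D}$, which is strictly stronger than a one-parameter Muckenhoupt-type condition. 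If the tree were regular, the concavity/symmetrization device in Lemma~\ref{p_eq_q_hardy} would collapse the tree problem to a sequence problem with the multiplicities $n_k$ built into the weights, and Andersen--Heinig (Theorem~\ref{hardy_diskr}) would finish; in effect this is what your ``scalar series'' reduction tacitly assumes. But the tree coming from a general John domain with $\Gamma$ only an $h$-set is \emph{not} regular --- the only control one has is the upper bound on branching from Lemma~\ref{litt}. Bridging this gap is exactly what Lemma~\ref{reduct} (the two-sided comparison $S_{\mathcal{D}}\le B_{\mathcal{D}}\le 2 S_{\mathcal{D}}$ valid only for small $\sigma$) and Lemma~\ref{symmetr} (embedding an arbitrary rooted subtree into a regular majorant tree $\hat{\mathcal{A}}$ so that $S_{\mathcal{D}}\lesssim S_{\hat{\mathcal{D}}}$, $Q_{\mathcal{D}}\lesssim Q_{\hat{\mathcal{D}}}$) accomplish. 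Without something playing that role, the argument does not close, and in the critical case~(b) the failure is not merely cosmetic: whether the borderline series converges depends on how the branching mass is distributed, which the tree Hardy criterion tests over all subtrees and a one-dimensional Hardy condition cannot see.

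A secondary, smaller omission: you do not say how to pass from $r=d$ (pure Hardy-type operator, kernel $|x-y|^{0}$) to general $r$. The paper handles this by splitting $G_x$ into $G_x^1=\{|x-y|\ge 2\,\mathrm{dist}(x,\Gamma)\}$, where $|x-y|\asymp\mathrm{dist}(y,\Gamma)$ so the kernel factor $|x-y|^{r-d}$ is absorbed into a modified weight exponent $\beta_{\tilde g}=\beta_g+d-r$, and $G_x^2$, where the integral is local and Adams' trace/Sobolev inequality (Theorem~\ref{adams_etc}) applies on each block, followed by H\"older in $p,q$. Your phrase ``classical local Sobolev--Poincar\'e operators'' gestures at this, but the splitting and the change of exponent are needed to make the weighted estimate work uniformly in the scale.

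In short: the chaining-to-$\mathbb{N}$ reduction is the missing idea; the problem is genuinely a Hardy inequality on an irregular tree, and the reduction to a regular tree (and only then to a sequence inequality) is the substance of the proof.
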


Later we shall give a more general formulation of this theorem. It
can be used in problems on estimating of approximation of the
class $W^r_{p,g}(\Omega)$ by piecewise polynomial functions in the
space $L_{q,v}(\Omega)$ and in problems on estimating of
$n$-widths.

We may assume that the norm on $\R^d$ is given by
$$
|(x_1, \, \dots, \, x_d)|=\max _{1\le i\le d} |x_i|.
$$

The paper is organized as follows. In Sections 2 and 3, we give
necessary notations and formulate the results which will be
required in the sequel.  In Section 4, we describe the domain
$\Omega$ in terms of a tree ${\cal T}$ (see \cite{vas_john}) and
construct a special partition of this tree. In Section 5, the
discrete weighted Hardy-type inequality on a combinatorial tree is
obtained for $p=q$. If the tree is regular, i.e., the number of
vertices that follow the given vertex depends only on the distance
between this vertex and the root of the tree, then we employ some
convexity arguments and reduce the problem to the proof of a
Hardy-type inequality for sequences. The tree which was
constructed in Section 4 is not regular in general; however, it
satisfies some more weak condition of regularity. For such trees
it is possible to reduce the problem to the case of regular trees.
To this end, a discrete analogue for theorem of Evans -- Harris --
Pick \cite{ev_har_pick} is proved. At this step, some quantity
$B_{{\cal D}}$ emerges; it is defined for subtrees ${\cal D}$ and
can be calculated recursively. Under some conditions on weights,
we prove that $B_{{\cal D}}$ can be estimated by some more simple
quantity $S_{{\cal D}}$. Then for any subtree ${\cal D}$ we
construct a subtree $\hat{\cal D}$ in some regular tree $\hat{\cal
A}$, such that $S_{{\cal D}}$ can be estimated from above by
$S_{\hat{\cal D}}$. In Section 6, the discrete Hardy-type
inequality on a tree is proved for $p\ne q$. To this end, the
problem is reduced to consider the cases $p=q$ and $p=\infty$;
here the H\"{o}lder inequality is applied. In Section 7, the
embedding theorem is proved. The problem is reduced to considering
the case $r=d$ and employing the discrete Hardy-type inequality on
a tree.

Embedding theorems and related results for function classes on
metric and combinatorial trees were studied by different authors.
Naimark and Solomyak \cite{naim_sol} obtained Hardy-type
inequalities on regular metric trees. For a weighted summation
operator (i.e., a Hardy-type operator) on a combinatorial tree
acting from $l_2$ into $l_\infty$, Lifshits and Linde
\cite{lifs_m, l_l, l_l1} obtained estimates of entropy numbers. In
\cite{e_h_l, ev_har_lang, solomyak} Evans, Harris, Lang and
Solomyak obtained estimates of approximation numbers for weighted
Hardy-type operators on metric trees. Also it is worth noting
results of Evans and Harris \cite{evans_har} on embeddings of
Sobolev classes on ridged domains into Lebesgue spaces; here the
definition of a ridged domain was given in terms of metric trees.

\section{Notation}
In what follows $\overline{A}$ (${\rm int}\, A$, ${\rm mes}\,A$,
${\rm card}\,A$, respectively) be, respectively, the closure
(interior, Lebesgue measure, cardinality) of $A$. If a set $A$ is
contained in some subspace $L\subset \R^d$ of dimension  $(d-1)$,
then we denote by ${\rm int}_{d-1}A$ the interior of $A$ with
respect to the induced topology on the space $L$. We say that sets
$A$, $B\subset \R ^d$ do not overlap if $A\cap B$ is a Lebesgue
nullset. For a convex set $A$ we denote by $\dim A$  the dimension
of the affine span of the set $A$.

A set $A\subset \R^d$ is said to be a parallelepiped if there are
$s_j\le t_j$, $1\le j\le d$, such that
$$
\prod _{j=1}^d (s_j, \, t_j)\subset A\subset\prod _{j=1}^d [s_j,
\, t_j].
$$
If $t_j-s_j=t_1-s_1$ for any $j=1, \, \dots , \, d$, then a
parallelepiped is referred to as a cube.

Let ${\cal K}$ be a family of closed cubes in $\R^d$ with axes
parallel to coordinate axes. For a cube $K \in {\cal K}$ and $s\in
\Z_+$ we denote by $\Xi _s(K)$ the set of $2^{sd}$ closed
non-overlapping cubes of the same size that form a partition of
$K$, and write $\Xi(K):=\bigcup_{s\in \Z_+} \Xi _s(K)$. We
generally consider that these cubes are close (except the proof of
Lemma \ref{nu_st}).

We recall some definitions from graph theory. Throughout, we
assume that the graphs have neither multiple edges nor loops.

Let $\Gamma$ be a graph containing at most countable number of
vertices. We shall denote by ${\bf V}(\Gamma)$ and by ${\bf
E}(\Gamma)$ the set of vertices and the set of edges of $\Gamma$,
respectively. Two vertices are called {\it adjacent} if there is
an edge between them. We shall identify pairs of adjacent vertices
with edges that connect them. Let $\omega_i\in {\bf V}(\Gamma)$,
$1\le i\le n$. The sequence $(\omega_1, \, \dots, \, \omega_n)$ is
called a {\it path}, if the vertices $\omega_i$ and $\omega_{i+1}$
are adjacent for any $i=1, \, \dots , \, n-1$. We say that a graph
is {\it connected} if any two vertices are connected by a finite
path. A connected graph is a {\it tree} if it has no cycles.

Let $({\cal T}, \, \omega_0)$ be a tree with a distinguished
vertex (or a root) $\omega_0$. We introduce a partial order on
${\bf V}({\cal T})$ as follows: we say that $\omega'>\omega$ if
there exists a path $(\omega_0, \, \omega_1, \, \dots , \,
\omega_n, \, \omega')$ such that $\omega=\omega_k$ for some $k\in
\overline{0, \, n}$. In this case, we set $\rho_{{\cal T}}(\omega,
\, \omega') =n+1-k$ and call this quantity the distance between
 $\omega$ and $\omega'$. In addition, we set
$\rho_{{\cal T}}(\omega, \, \omega)=0$. If $\omega'>\omega$ or
$\omega'=\omega$, then we write $\omega'\ge \omega$ and denote
$[\omega, \, \omega']:= \{\omega''\in {\bf V}({\cal T}):\omega \le
\omega''\le \omega'\}$ (this set of vertices is called a segment).
This partial order on ${\cal T}$ induces a partial order on its
subtree.

For any $j\in \Z_+$ we set
$$
\label{v1v}{\bf V}_j(\omega):={\bf V}_j ^{{\cal T}}(\omega):=
\{\omega'>\omega:\; \rho_{{\cal T}}(\omega, \, \omega')=j\}.
$$
Given $\omega\in {\bf V}({\cal T})$, we denote by ${\cal
T}_\omega=({\cal T}_\omega, \, \omega)$ a subtree of ${\cal T}$
with the set of vertices
\begin{align}
\label{vpvtvpv} \{\omega'\in {\bf V}({\cal T}):\omega'\ge
\omega\}.
\end{align}

Let ${\cal G}$ be a subgraph in ${\cal T}$. Denote by ${\bf
V}_{\max} ({\cal G})$ and by ${\bf V}_{\min}({\cal G})$,
respectively, the sets of maximal and minimal vertices in ${\cal
G}$.

Let ${\bf W}\subset {\bf V}({\cal T})$. We say that ${\cal
G}\subset {\cal T}$ is a {\it maximal subgraph} on the set of
vertices ${\bf W}$ if ${\bf V}({\cal G})={\bf W}$ and if any two
vertices $\xi'$, $\xi''\in {\bf W}$ that are adjacent in ${\cal
T}$ are also adjacent in ${\cal G}$.

We need the concept of a metric tree. Let $(\cal{T}, \, \omega_*)$
be a tree with a finite set of vertices, and let $\Delta:{\bf
E}({\cal{T}})\rightarrow 2^{\R}$ be a mapping such that for any
$\lambda\in {\bf E}({\cal{T}})$ the set
$\Delta(\lambda)=[a_\lambda, \, b_\lambda]$ is a non-trivial
segment. Then the pair $\mathbb{T}=({\cal T}, \, \Delta)$ is
called a metric tree. A point on the edge $\lambda$ of the metric
tree $\mathbb{T}$ is a pair $(t, \, \lambda)$, $t\in [a_\lambda,
\, b_\lambda]$, $\lambda\in {\bf E}({\cal{T}})$ (if $\omega'\in
{\bf V}_1(\omega)$, $\omega''\in {\bf V}_1(\omega')$,
$\lambda=(\omega, \, \omega')$, $\lambda'=(\omega', \, \omega'')$,
then we set $(b_\lambda, \, \lambda)=(a_{\lambda'}, \,
\lambda')$). The distance between two points of $\mathbb{T}$ is
defined as follows: if $(\omega_0, \, \omega_1, \, \dots, \,
\omega_n)$ is a path in the tree $\mathbb{T}$, $n\ge 2$,
$\lambda_i=(\omega_{i-1}, \, \omega_i)$, $x=(t_1, \, \lambda_1)$,
$y=(t_n, \, \lambda_n)$, the we set
$$
|y-x|_{\mathbb{T}}=|b_{\lambda_1}-t_1|+\sum \limits _{i=2}^{n-1}
|b_{\lambda_i}-a_{\lambda_i}| +|t_n-a_{\lambda_n}|;
$$
if $x=(t', \, \lambda)$, $y=(t'', \, \lambda)$, then
$|y-x|_{\mathbb{T}}=|t'-t''|$.

We say that $(t', \, \lambda')\le(t'', \, \lambda'')$ if
$\lambda'\le\lambda''$ and $t'\le t''$ in the case
$\lambda'=\lambda''$. If $(t', \, \lambda')\le (t'', \,
\lambda'')$ and $(t', \, \lambda')\ne(t'', \, \lambda'')$, then we
write $(t', \, \lambda')<(t'', \, \lambda'')$. If $a$, $x\in
\mathbb{T}$, $a\le x$, then we set $[a, \, x]=\{y\in \mathbb{T}:\,
a\le y\le x\}$.

A subset $\mathbb{A}=\{(t, \, \lambda):\; \lambda\in {\bf
E}(\mathbb{T}), \; t\in A_\lambda\}$ is said to be measurable, if
$A_\lambda$ is measurable for any $\lambda\in {\bf
E}(\mathbb{T})$. The Lebesgue measure of $\mathbb{A}$ is defined
by
$$
|\mathbb{A}|=\sum \limits _{\lambda\in {\bf E}(\mathbb{T})}
|A_\lambda|.
$$
A function $f:\mathbb{A}\rightarrow \R$ is said to be integrable
if $f_\lambda:=f|_{\{(t, \, \lambda):\, t\in A_\lambda\}}$ is
integrable for any $\lambda\in {\bf E}(\mathbb{T})$ and the sum
$\sum \limits _{\lambda \in {\bf E}(\mathbb{T})} \int \limits
_{A_\lambda} |f_\lambda(t)|\, dt$ is finite. In this case, we set
$$
\int \limits _{\mathbb{A}} f(x)\, dx=\sum \limits _{\lambda \in
{\bf E}(\mathbb{T})} \int \limits _{A_\lambda} f_\lambda(t)\, dt.
$$

Let $\mathbb{D}\subset \mathbb{T}$ be a connected subset. Denote
by ${\cal T}_{\mathbb{D}}$ the maximal subtree in $\cal{T}$ such
that for any $\lambda \in {\bf E}({\cal T}_{\mathbb{D}})$ the set
$\Delta(\lambda)\cap \mathbb{D}$ is a non-trivial segment. Set
$\Delta_{\mathbb{D}}(\lambda) =\Delta(\lambda) \cap \mathbb{D}$,
$\lambda \in {\bf E}({\cal T})$. Then $({\cal T}_{\mathbb{D}}, \,
\Delta_{\mathbb{D}})$ is a metric tree, which will be identified
with the set $\mathbb{D}$ and which will be called a metric
subtree in $\mathbb{T}$.

Let $\mathbb{D}$ be a metric subtree in $\mathbb{T}$. A point
$t\in \mathbb{D}$ is said to be maximal if $x\in
\mathbb{T}\backslash \mathbb{D}$ for any $x>t$.

\section{Preliminary results}
Let $\Delta$ be a cube with a side of length $2^{-m}$, $m\in \Z$.
Set ${\bf m}(\Delta)=m$. In particular, if $\Delta \in
\Xi\left(\left[-\frac 12, \, \frac 12\right]^d\right)$, then
$\Delta \in \Xi _{{\bf m}(\Delta)}\left(\left[-\frac 12, \, \frac
12\right]^d\right)$.

We shall need Whitney's covering theorem (see, e.g., \cite[p.
562]{leoni1}).
\begin{trma}
\label{whitney} Let $\Omega\subset \left[-\frac 12, \, \frac
12\right]^d$ be an open set. Then there exists a family of closed
pairwise non-overlapping cubes $\Theta(\Omega)=
\{\Delta_j\}_{j\in\N}\subset \Xi\left(\left[-\frac 12, \, \frac
12\right]^d\right)$ with the following properties:
\begin{enumerate}
\item $\Omega=\cup _{j\in \N}\Delta_j$;
\item ${\rm dist}\, (\Delta_j, \, \partial \Omega)\underset{d}{\asymp} 2^{-{\bf m}(\Delta_j)}$;
\item for any $j\in \N$
\begin{align}
\label{card_sopr} {\rm card}\, \{i\in \N:\dim (\Delta_i\cap
\Delta_j)=d-1\}\le 12^d;
\end{align}
\item if $\dim (\Delta_i\cap \Delta_j)=d-1$, then
\begin{align}
\label{bfm} {\bf m}(\Delta_j)-2\le {\bf m}(\Delta_i)\le {\bf
m}(\Delta_j)+2.
\end{align}
\end{enumerate}
\end{trma}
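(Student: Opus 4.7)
The statement is the classical Whitney decomposition adapted to the ambient cube $[-1/2,1/2]^d$; the cited reference is Leoni's book, so the plan is to reproduce the standard argument. My strategy is to first select a ``right-sized'' dyadic cube at each point of $\Omega$ using the distance-to-boundary function, then extract a maximal subfamily to ensure non-overlapping, and finally verify the four enumerated properties from the selection rule.

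The first step is the scale choice. For each $x\in \Omega$, set $d(x)={\rm dist}(x,\partial \Omega)$ and choose the unique integer $k(x)\in \Z$ with $2\sqrt{d}\cdot 2^{-k(x)}<d(x)\le 4\sqrt{d}\cdot 2^{-k(x)}$. Let $Q_x$ be the unique cube of $\Xi_{k(x)}\left(\left[-\tfrac12,\tfrac12\right]^d\right)$ containing $x$. Since $\operatorname{diam}Q_x=\sqrt{d}\,2^{-k(x)}<d(x)/2$, the cube $Q_x$ is contained in $\Omega$ and its distance to $\partial\Omega$ is comparable to $2^{-k(x)}$ with constants depending only on $d$. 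Let $\mathcal{F}$ be the collection of all such cubes $Q_x$ as $x$ ranges over $\Omega$.

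The second step is to pass to a non-overlapping family. Two dyadic cubes either do not overlap or one contains the other, so let $\Theta(\Omega)$ be the set of \emph{maximal} elements of $\mathcal{F}$ with respect to inclusion. This family is pairwise non-overlapping and, because each $x\in\Omega$ is contained in $Q_x\in\mathcal F$ and hence in the (finite, since scales are bounded below by $\log_2 d(x)$ finiteness of subdivisions) maximal dyadic ancestor of $Q_x$ still in $\mathcal{F}$, the family covers $\Omega$. Moreover, passing to a larger ancestor only enlarges the cube by a bounded factor before leaving the selection rule, so property~2, ${\rm dist}\,(\Delta,\partial\Omega)\underset{d}{\asymp}2^{-{\bf m}(\Delta)}$, survives in $\Theta(\Omega)$.

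For property~4, suppose $\dim(\Delta_i\cap\Delta_j)=d-1$. Pick $x\in \Delta_i$ and $y\in\Delta_j$ on the shared face. Then $|d(x)-d(y)|\le |x-y|\le \sqrt{d}\max(2^{-{\bf m}(\Delta_i)},2^{-{\bf m}(\Delta_j)})$. Combined with property~2, this gives $2^{-{\bf m}(\Delta_i)}\asymp 2^{-{\bf m}(\Delta_j)}$, and because side lengths are powers of $2$, the bound ${\bf m}(\Delta_j)-2\le{\bf m}(\Delta_i)\le{\bf m}(\Delta_j)+2$ follows after tuning the constants in the selection rule. Property~3 then reduces to a geometric volume count: a cube adjacent to $\Delta_j$ has side at most $4$ times that of $\Delta_j$ and at least $1/4$ times it, so all neighbors lie in a bounded concentric dilate of $\Delta_j$ and are pairwise non-overlapping of side $\ge 2^{-{\bf m}(\Delta_j)-2}$; the volume bound yields at most $12^d$ of them.

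The only step that requires care is the interplay between the constants in the scale selection and in properties~2 and~4: one must choose the thresholds $2\sqrt{d}$ and $4\sqrt{d}$ so that both the containment $Q_x\subset\Omega$ holds and adjacent cubes are forced to differ in generation by at most $2$. Everything else is bookkeeping on dyadic cubes, so I expect no serious obstacle beyond matching these numerical constants.
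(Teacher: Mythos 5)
The paper does not prove this theorem; it is quoted from Leoni's book (\cite[p.~562]{leoni1}), so there is no in-paper argument to compare against. Your construction is the standard Whitney decomposition, and its skeleton is sound: the scale rule $2\sqrt{d}\,2^{-k(x)}<d(x)\le 4\sqrt{d}\,2^{-k(x)}$ forces $Q_x\subset\Omega$ with $\operatorname{dist}(Q_x,\partial\Omega)\asymp_d 2^{-k(x)}$, the maximal dyadic cubes of $\mathcal F$ form a non-overlapping cover of $\Omega$ (the chains terminate because $d(x)\le\sqrt d$ bounds the generations from below), and for any $z$ in a shared face one gets $\sqrt d\,2^{-m_i}\le d(z)\le 5\sqrt d\,2^{-m_i}$ and likewise for $m_j$, so the ratio $2^{|m_i-m_j|}\le 5$ forces $|m_i-m_j|\le 2$. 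One small remark: the sentence about ``passing to a larger ancestor'' is superfluous, since every maximal element of $\mathcal F$ is itself some $Q_x$ and therefore already satisfies the distance comparison.

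The one genuine gap is in property~3. The volume count as you describe it does not yield $12^d$. A neighbour of side up to $4s$ sharing a face with $\Delta_j$ (of side $s$) may protrude $4s$ beyond $\Delta_j$, so the enclosing cube has side $9s$; dividing by the minimal volume $(s/4)^d$ gives $36^d$, not $12^d$. To actually obtain the bound in the statement, count by faces instead of by volume: the sets $\Delta_i\cap\Delta_j$ for the cubes $\Delta_i$ meeting a fixed face $F$ of $\Delta_j$ are pairwise non-overlapping $(d-1)$-cubes of side at least $s/4$ contained in $F$, so there are at most $4^{d-1}$ of them per face, hence at most $2d\cdot 4^{d-1}\le 12^d$ neighbours in total. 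With that repair the argument is complete.
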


Andersen and Heinig in \cite{and_hein, hein1} proved discrete
analogues of the two-weighted Hardy-type inequality. We formulate
a particular case of their result, which will be used in the
sequel.
\begin{trma}
\label{hardy_diskr} Let $1\le p\le q<\infty$, and let
$\{u_n\}_{n\in \Z}$, $\{w_n\}_{n\in \Z}$ be nonnegative sequences
such that
$$
C:=\sup _{m\in \Z}\left(\sum \limits _{n=m}^\infty
w_n^q\right)^{1/q}\left( \sum \limits _{n=-\infty}^m
u_n^{p'}\right)^{1/p'}<\infty.
$$
Then, for any sequence $\{a_n\}_{n\in\Z}$,
\begin{align}
\label{a_h1} \left(\sum \limits _{n=-\infty}^\infty\left|w_n\sum
\limits _{k=-\infty}^n
u_ka_k\right|^q\right)^{1/q}\underset{p,q}{\lesssim} C\left( \sum
\limits _{n\in \Z}|a_n|^p\right)^{1/p}.
\end{align}
\end{trma}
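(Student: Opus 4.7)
The plan is to prove this standard discrete two-weight Hardy inequality by reducing it to the classical continuous Muckenhoupt Hardy inequality. By monotone convergence one may assume $\{a_n\}$ is non-negative and finitely supported. Associate to the three sequences the step functions on $\R$ defined by $\tilde u(t) := u_n$, $\tilde w(t) := w_n$, $\tilde a(t) := a_n$ for $t \in [n,n+1)$, $n \in \Z$. For any $T \in \R$, setting $m := \lfloor T \rfloor$,
$$
\left(\int_T^\infty \tilde w^q\, dt\right)^{1/q}\left(\int_{-\infty}^T \tilde u^{p'}\, dt\right)^{1/p'} \le \left(\sum \limits_{n=m}^\infty w_n^q\right)^{1/q} \left(\sum \limits_{n=-\infty}^m u_n^{p'}\right)^{1/p'} \le C,
$$
because the first factor is non-increasing and the second non-decreasing in $T$, so both are controlled at the integer endpoints. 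Thus the continuous Muckenhoupt constant of $\tilde u, \tilde w$ is at most $C$.

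Applying the continuous one-dimensional Muckenhoupt Hardy inequality for the pair $(p,q)$ with $1 \le p \le q < \infty$ (classical, with implied constant depending only on $p,q$) gives
$$
\left(\int_{\R} \tilde w(t)^q \Bigl|\int_{-\infty}^t \tilde u(s)\tilde a(s)\, ds\Bigr|^q dt\right)^{1/q} \underset{p,q}{\lesssim} C \cdot \|\tilde a\|_{L_p(\R)} = C \Bigl(\sum_n a_n^p\Bigr)^{1/p}.
$$
For $t \in [n,n+1)$ we have $\int_{-\infty}^t \tilde u \tilde a \ge A_{n-1}$ where $A_n := \sum_{k \le n} u_k a_k$, so the left side majorises $(\sum_n w_n^q A_{n-1}^q)^{1/q}$. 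To pass from $A_{n-1}$ to $A_n = A_{n-1} + u_n a_n$, the convexity bound $A_n^q \le 2^{q-1}(A_{n-1}^q + u_n^q a_n^q)$ reduces the task to controlling the diagonal sum $\sum_n w_n^q u_n^q a_n^q$. Taking $m = n$ in the hypothesis and keeping only the diagonal terms in both factors yields $w_n u_n \le C$, whence
$$
\sum_n (w_n u_n a_n)^q \le C^q \sum_n a_n^q \le C^q \Bigl(\sum_n a_n^p\Bigr)^{q/p}
$$
by the trivial embedding $\ell_p \hookrightarrow \ell_q$ on counting measure for $p \le q$. Combining gives the claim.

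The only mildly delicate step is the integer-sup reduction in the first display, which is straightforward thanks to the monotonicity of the two Muckenhoupt factors in $T$; no other difficulty arises, since the continuous Muckenhoupt theorem is invoked as a black box. The genuine obstacle emerges only if one insists on a self-contained discrete proof avoiding the continuous theorem: the standard alternative is a dyadic-block decomposition along the levels $n_j := \min\{n : A_n > 2^j\}$, giving $A_n \asymp 2^j$ on the block $I_j := \{n_j, \dots, n_{j+1}-1\}$; the Muckenhoupt hypothesis at $m = n_j$ together with H\"older's inequality $2^j < A_{n_j} \le (\sum_{k \le n_j} u_k^{p'})^{1/p'}(\sum_{k \le n_j} a_k^p)^{1/p}$ yields the block estimate $\sum_{n \in I_j} w_n^q A_n^q \lesssim C^q (\sum_{k \le n_j} a_k^p)^{q/p}$, and the main technical effort is then to sum over $j$ and obtain $\|a\|_p^q$ on the right, exploiting the geometric doubling of $A_{n_j}$ together with $q \ge p$ to telescope the partial $\ell_p$-norms.
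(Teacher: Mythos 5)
The paper does not prove Theorem~\ref{hardy_diskr}: it is stated as a cited particular case of results of Andersen and Heinig \cite{and_hein, hein1}, so there is no in-paper proof to compare against. That said, your proof is correct, and the reduction to the continuous one-dimensional Muckenhoupt--Bradley Hardy inequality is a clean route. The three ingredients all check out: (i) the step-function transfer of the Muckenhoupt condition works because for $T\in[m,\,m+1)$ one has simultaneously $\int_T^\infty \tilde w^q \le \sum_{n\ge m} w_n^q$ and $\int_{-\infty}^T \tilde u^{p'} \le \sum_{n\le m} u_n^{p'}$, so the product of the factors at any real $T$ is dominated by the discrete constant; (ii) the continuous inequality with constant depending only on $(p,q)$ then controls $\bigl(\sum_n w_n^q A_{n-1}^q\bigr)^{1/q}$ where $A_n=\sum_{k\le n}u_ka_k$; (iii) the shift from $A_{n-1}$ to $A_n$ is handled by the elementary pointwise bound $w_nu_n\le C$ (taking $m=n$ and keeping only the diagonal terms) together with $\ell_p\hookrightarrow\ell_q$ for $p\le q$. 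One small remark: when $p=1$ the exponent $p'=\infty$ should be read as a supremum, $\bigl(\sum_{n\le m}u_n^{p'}\bigr)^{1/p'}=\sup_{n\le m}u_n$, under which the bound $w_nu_n\le C$ and the transfer to the continuous $L_\infty$-norm of $\tilde u$ go through unchanged. Your closing sketch of a direct dyadic-block proof along level sets $n_j=\min\{n: A_n>2^j\}$ is in the spirit of the original Andersen--Heinig argument and would give a self-contained discrete proof, but the continuous-reduction route you chose to carry out in full is equally rigorous and somewhat shorter.
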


Evans, Harris and Pick in \cite{ev_har_pick} proved a criterion
for boundedness of a two-weighted Hardy-type operator on a metric
tree.

Let $\mathbb{T}=({\cal T}, \, \Delta)$ be a metric tree, $x_0\in
\mathbb{T}$, and let $u$, $w:\mathbb{T}\rightarrow \R_+$ be
measurable functions. We set $\mathbb{T}_{x_0}=\{x\in
\mathbb{T}:\; x\ge x_0\}$,
$$
I_{u,w,x_0}f(x)=w(x)\int \limits_{x_0}^x u(t)f(t)\, dt.
$$

Denote by ${\cal J}_{x_0}={\cal J}_{x_0}(\mathbb{T})$ a family of
metric subtrees $\mathbb{D}\subset \mathbb{T}$ with the following
properties:
\begin{enumerate}
\item $x_0$ is a minimal vertex in $\mathbb{D}$;
\item if $x\in \partial \mathbb{D}\backslash \{x_0\}$, then $x$ is
a maximal point in $\mathbb{D}$.
\end{enumerate}

For $\mathbb{D}\in {\cal J}_{x_0}$, we set
$$
\alpha_{\mathbb{D}}=\inf \left\{\|f\|_{L_p(\mathbb{T})}:\, \int
\limits _{x_0}^t |f(x)|u(x)\, dx=1\text{ for any }t\in
\partial \mathbb{D}\right\}.
$$
\begin{trma}
\label{cr_har} Let $1\le p\le q\le \infty$. Then the operator
$I_{u,w,x_0}:L_p(\mathbb{T}_{x_0})\rightarrow
L_q(\mathbb{T}_{x_0})$ is bounded if and only if
$$
C_{u,w}:=\sup _{\mathbb{D}\in {\cal
J}_{x_0}}\frac{\|w\chi_{\mathbb{T}_{x_0}\backslash \mathbb{D}}\|
_{L_q(\mathbb{T})}}{\alpha _{\mathbb{D}}}<\infty.
$$
Moreover, $C_{u,w}\le \|I_{u,w}\| _{L_p(\mathbb{T}_{x_0})
\rightarrow L_q(\mathbb{T}_{x_0})}\le 4C_{u,w}$.
\end{trma}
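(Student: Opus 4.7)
The plan is to reduce the claimed weighted Sobolev--Poincar\'e inequality to a discrete weighted Hardy-type inequality on the combinatorial tree ${\cal T}$ that encodes a Whitney decomposition of $\Omega$, and then invoke the tree Hardy inequality developed in Sections 5--6. First, I would apply Theorem \ref{whitney} to decompose $\Omega$ into Whitney cubes $\{\Delta_j\}_{j\in\N}$ and organize them as vertices $\omega$ of a rooted tree ${\cal T}$ (the construction in Section 4, cf.\ \cite{vas_john}), with root $\omega_0$ taken near $x_*$ and $\omega'>\omega$ iff $\Delta_{\omega'}$ lies behind $\Delta_\omega$ along the John chain toward $\omega_0$. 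The John property together with (\ref{bfm}) makes adjacent cubes have comparable sidelengths, while (\ref{card_sopr}) bounds the local branching. On each cube, or a fixed enlargement $\tilde\Delta_\omega$, the classical unweighted Sobolev--Poincar\'e inequality supplies an averaged Taylor polynomial $P_\omega f\in{\cal P}_{r-1}$ with
\[
\|f - P_\omega f\|_{L_q(\Delta_\omega)} \lesssim 2^{-\delta{\bf m}(\Delta_\omega)}\,\|\nabla^r f\|_{L_p(\tilde\Delta_\omega)}.
\]

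Second, set $Pf:=P_{\omega_0}f$ and telescope along the unique path $\omega_0=\xi_0<\xi_1<\dots<\xi_n=\omega$ in ${\cal T}$, so that on $\Delta_\omega$
\[
f - Pf = (f-P_\omega f) + \sum_{k=1}^n (P_{\xi_k}f - P_{\xi_{k-1}}f).
\]
Each difference $P_{\xi_k}f - P_{\xi_{k-1}}f\in{\cal P}_{r-1}$ is a polynomial living on two comparable overlapping cubes, so the finite-dimensionality of ${\cal P}_{r-1}$ gives $\|P_{\xi_k}f-P_{\xi_{k-1}}f\|_{L_\infty(\Delta_\omega)}\lesssim 2^{-\delta{\bf m}(\Delta_{\xi_k})}\|\nabla^r f\|_{L_p(\tilde\Delta_{\xi_k})}$. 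Pulling out the weight via $g\asymp\varphi_g({\rm dist}(\Delta_\xi,\Gamma))$ on $\tilde\Delta_\xi$, which follows from (\ref{psi_cond}) and (\ref{ghi_g0}), one obtains the pointwise bound
\[
|f(x) - Pf(x)| \lesssim \sum_{\xi\in[\omega_0,\omega]} 2^{-\delta{\bf m}(\Delta_\xi)}\,\varphi_g({\rm dist}(\Delta_\xi,\Gamma))\cdot a_\xi, \quad x\in\Delta_\omega,
\]
where $a_\xi := \|\nabla^r f/g\|_{L_p(\tilde\Delta_\xi)}$, and the collection $\{a_\xi\}$ has $\ell_p$-norm controlled by $\|\nabla^r f/g\|_{L_p(\Omega)}$ thanks to (\ref{card_sopr}).

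Third, raising to the power $q$, multiplying by $v^q$, and integrating over $\Omega=\bigsqcup_\omega\Delta_\omega$ recasts $\|(f-Pf)v\|_{L_q(\Omega)}$ as a discrete weighted Hardy-type quantity on ${\cal T}$:
\[
\Bigl(\sum_{\omega\in{\bf V}({\cal T})} w_\omega^q \Bigl|\sum_{\xi\le\omega} u_\xi a_\xi\Bigr|^q\Bigr)^{1/q} \underset{\mathfrak{Z}}{\lesssim} \Bigl(\sum_{\omega}a_\omega^p\Bigr)^{1/p},
\]
with $w_\omega$, $u_\omega$ built from $\varphi_v$, $\varphi_g$, the scale $2^{-{\bf m}(\Delta_\omega)}$, and the $h$-set mass of a shell around $\Delta_\omega$ (coming from $\mu(B_t(x))\asymp h(t)$). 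Hypothesis (\ref{muck}) gives local $\mu$-integrability of $v^q$ across $\Gamma$; hypothesis (\ref{beta}) balances the decay $2^{-\delta{\bf m}}$ against the combined power $\beta_g+\beta_v$, correcting for the extra gap $\theta(1/q-1/p)_+$ produced by the $h$-set dimension $\theta$ from (\ref{def_h}); in the critical case b) of (\ref{beta}), the logarithmic correction (\ref{phi_g}) together with condition (\ref{g0ag}) on $\gamma,\alpha$ supplies the summability. These are precisely the hypotheses needed for the tree Hardy inequalities of Sections 5--6, and the non-regularity of ${\cal T}$ is absorbed by the Evans--Harris--Pick-type reduction to a regular tree sketched in the Introduction.

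Finally, the reduction to $r=d$ announced in Section 7 is effected by rewriting $f-P_{\omega_0}f$ through the Reshetnyak integral representation along John curves, which absorbs excess differentiations into the kernel and matches the shape of a first-order summation operator on ${\cal T}$. Linearity of $f\mapsto Pf$ follows from taking $P_{\omega_0}$ to be the averaged Taylor polynomial on a fixed ball $B\subset\Omega$ centered at $x_*$: since ${\rm dist}(x_*,\Gamma)>0$ by (\ref{diam_dist}), the weight $v$ is bounded above and below on $B$, so the bounded linear operator $L_1(B)\to{\cal P}_{r-1}$ extends continuously to $L_{q,v}(\Omega)\to{\cal P}_{r-1}(\Omega)$. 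The main obstacle will be verifying the Hardy hypothesis $C_{u,w}<\infty$ for the concrete tree weights generated by $\varphi_g,\varphi_v,\Lambda$: the $h$-set mass $h({\rm dist}(\Delta_\omega,\Gamma))$ has to be estimated against the power balance of (\ref{beta}) and, in case b), against the logarithmic terms $|\log t|^\gamma$, $|\log t|^{-\alpha_g}$, $|\log t|^{-\alpha_v}$ of (\ref{phi_g}). This is the bookkeeping that both forces the two-case form of (\ref{beta}) and explains why (\ref{g0ag}) enters in exactly the strict form stated.
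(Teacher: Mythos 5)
Your proposal does not address the statement in question. Theorem \ref{cr_har} is the Evans--Harris--Pick criterion for boundedness of the weighted Hardy operator $I_{u,w,x_0}:L_p(\mathbb{T}_{x_0})\to L_q(\mathbb{T}_{x_0})$ on a \emph{metric tree} $\mathbb{T}$, characterized via the supremum over the family ${\cal J}_{x_0}$ of subtrees of the ratio $\|w\chi_{\mathbb{T}_{x_0}\backslash\mathbb{D}}\|_{L_q}/\alpha_{\mathbb{D}}$, together with the two-sided norm bound $C_{u,w}\le\|I_{u,w}\|\le 4C_{u,w}$. What you have sketched instead is the strategy for the paper's \emph{main} result, Theorem \ref{main}: Whitney decomposition, telescoping of averaged Taylor polynomials, Reshetnyak integral representation, reduction to $r=d$, and verification of the hypotheses (\ref{muck}), (\ref{beta}), (\ref{g0ag}) for the resulting discrete tree weights. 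None of that constitutes a proof of the metric-tree Hardy criterion; the tree-Hardy machinery is a \emph{tool} used downstream, not the content of Theorem \ref{cr_har}.

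Note also that the paper itself supplies no proof of Theorem \ref{cr_har}: it is labeled as a ``Theorem B'' style cited result, attributed explicitly to Evans, Harris and Pick \cite{ev_har_pick}, and invoked as a black box inside Lemma \ref{hardy_cr}. A correct blind proof would need to establish (i) the lower bound $C_{u,w}\le\|I_{u,w}\|$ by testing the operator on near-extremizers $f$ of the variational problem defining $\alpha_{\mathbb{D}}$ for each $\mathbb{D}\in{\cal J}_{x_0}$, and (ii) the upper bound $\|I_{u,w}\|\le 4C_{u,w}$, typically via a stopping-time partition of $\mathbb{T}_{x_0}$ into nested subtrees in ${\cal J}_{x_0}$ chosen at dyadic levels of $t\mapsto\int_{x_0}^t|f|u\,dx$, followed by summation of the contributions of the annular regions between successive subtrees. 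Your proposal contains neither ingredient and should be regarded as a proof of the wrong statement.
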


The quantity $\alpha_{\mathbb{D}}$ is calculated recursively. The
following theorem is also proved in \cite{ev_har_pick}.
\begin{trma}
\label{rekurs} Let $\mathbb{D}\in {\cal J}_{x_0}$,
$\mathbb{D}=\cup _{j=0}^m \mathbb{D}_j$, $\mathbb{D}_0=[x_0, \,
y_0]$, $x_0<y_0$, $\mathbb{D}_j\in {\cal J}_{y_0}$, $1\le j\le m$,
$\mathbb{D}_i\cap \mathbb{D}_j=\{y_0\}$, $i\ne j$. Then
$$
\frac{1}{\alpha_{\mathbb{D}}}=\left\|\left(\alpha_{\mathbb{D}_0}^{-1},
\, \left\|(\alpha_{\mathbb{D}_i})_{i=1}^m\right\|^{-1}
_{l_p^m}\right)\right\|_{l_{p'}^2}.
$$
\end{trma}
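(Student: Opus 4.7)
My plan is to establish the identity by decomposing any admissible $f$ for $\alpha_{\mathbb{D}}$, applying the definition of $\alpha$ piecewise, and then running a one-variable optimization in the scalar $A:=\int_{x_0}^{y_0}|f_0|u\,dx$.

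For the lower bound, take any $f$ with $\int_{x_0}^t|f(x)|u(x)\,dx=1$ for every $t\in\partial\mathbb{D}$, write $f_0=f|_{\mathbb{D}_0}$, $f_i=f|_{\mathbb{D}_i}$ for $i=1,\dots,m$, and set
$$A:=\int_{x_0}^{y_0}|f_0(x)|u(x)\,dx\in[0,1].$$
Any maximal point $t$ of $\mathbb{D}$ distinct from $y_0$ lies in some $\mathbb{D}_i$ with $i\ge 1$, and the path $[x_0,t]$ splits through $y_0$; hence $\int_{y_0}^t|f_i|u\,dx=1-A$ for every maximal $t\in\mathbb{D}_i$. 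Consequently $(1-A)^{-1}f_i$ is admissible for $\alpha_{\mathbb{D}_i}$, so $\|f_i\|_{L_p(\mathbb{D}_i)}\ge(1-A)\alpha_{\mathbb{D}_i}$, and analogously $\|f_0\|_{L_p(\mathbb{D}_0)}\ge A\alpha_{\mathbb{D}_0}$. Summing $p$-th powers (treating $A\in\{0,1\}$ by direct inspection) gives
$$\|f\|_{L_p(\mathbb{D})}^p\ge A^p\alpha_{\mathbb{D}_0}^p+(1-A)^p\sum_{i=1}^m\alpha_{\mathbb{D}_i}^p.$$

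For the matching upper bound, fix $\varepsilon>0$ and pick near-extremal admissible $g_j$ for each $\alpha_{\mathbb{D}_j}$ with $\|g_j\|_{L_p}\le\alpha_{\mathbb{D}_j}+\varepsilon$. For any $A\in[0,1]$, define $f=Ag_0$ on $\mathbb{D}_0$ and $f=(1-A)g_i$ on $\mathbb{D}_i$: this $f$ is automatically admissible for $\alpha_{\mathbb{D}}$ (the contributions $A$ and $1-A$ add to $1$ along every path to a maximal point), and it attains $\|f\|_p^p=A^p\|g_0\|_p^p+(1-A)^p\sum_i\|g_i\|_p^p$. Optimizing both bounds in $A$ reduces the proof to the elementary identity
$$\min_{A\in[0,1]}\bigl[A^pa^p+(1-A)^pb^p\bigr]=\bigl(a^{-p'}+b^{-p'}\bigr)^{-p/p'}\qquad(a,b>0,\;1<p<\infty),$$
which follows from the first-order condition $A/(1-A)=(b/a)^{p'}$ together with $p+p'=pp'$. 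Substituting $a=\alpha_{\mathbb{D}_0}$, $b=\|(\alpha_{\mathbb{D}_i})_{i=1}^m\|_{l_p^m}$, taking the $p'/p$-th power, and sending $\varepsilon\to 0$ yields $\alpha_{\mathbb{D}}^{-p'}=\alpha_{\mathbb{D}_0}^{-p'}+\|(\alpha_{\mathbb{D}_i})_{i=1}^m\|_{l_p^m}^{-p'}$, which is precisely the stated formula after taking $p'$-th roots.

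The endpoints $p=1$ and $p=\infty$ (forcing $p'=\infty$ and $p'=1$ respectively) require a minor adaptation: instead of a derivative, one balances a maximum (e.g.\ for $p=\infty$, set $A\alpha_{\mathbb{D}_0}=(1-A)\max_i\alpha_{\mathbb{D}_i}$) and uses the convention $\|\cdot\|_{l_\infty}$, $\|\cdot\|_{l_1}$ in the outer norm; the degenerate cases $\alpha_{\mathbb{D}_j}\in\{0,\infty\}$ are absorbed by the standard conventions $1/0=\infty$, $1/\infty=0$. I do not anticipate a serious obstacle: the tree structure enters only through the identification of the decomposed admissibility constraints, and the remaining step is a convex optimization in the single scalar $A$.
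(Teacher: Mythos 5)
The paper does not actually prove Theorem \ref{rekurs}: the sentence preceding it says the result ``is also proved in \cite{ev_har_pick},'' so there is no in-paper argument to compare against. That said, your proof is correct and is essentially the expected one for this recursive identity. You split the $L_p$-norm additively over the measure-disjoint pieces $\mathbb{D}_j$ (implicitly restricting $f$ to $\mathbb{D}$, which only helps in the lower bound and is needed so that $\|f\|_p^p=\sum_j\|f_j\|_p^p$), reduce the admissibility constraints to the single free scalar $A=\int_{x_0}^{y_0}|f_0|u$ (forced into $[0,1]$ because $u|f|\ge 0$ and $\int_{x_0}^{y_0}\le\int_{x_0}^t=1$ for each maximal $t$), and then solve the one-variable convex minimization $\min_{A\in[0,1]}\bigl[A^pa^p+(1-A)^pb^p\bigr]=(a^{-p'}+b^{-p'})^{-p/p'}$. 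The exponent arithmetic $p'(p-1)=p$, hence $pp'=p+p'$, checks out, and the inner $\ell_p^m$ norm correctly aggregates the branch terms precisely because the $p$-th power of the norm is additive over the branches. Your treatment of the endpoints $p\in\{1,\infty\}$ (linearity gives $\min(a,b)$; balancing gives $ab/(a+b)$) also matches the formula under the usual $\ell_{p'}$ conventions. No gap.
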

Notice that if $x_0=(t', \, \lambda)$, $\lambda\in {\bf
E}(\mathbb{T})$, $t'\in \Delta(\lambda)$, and $y_0$ is such as in
Theorem \ref{rekurs}, then $y_0$ is a right end of
$\Delta(\lambda)$.

The following theorem is proved in \cite{sobol38, adams, adams1};
see also \cite[p. 51]{mazya1} and \cite[p. 566]{leoni1}.
\begin{trma}
\label{adams_etc} Let $1<p<q<\infty$, $d\in \N$, $r>0$, $\frac
rd+\frac{1}{q}-\frac{1}{p}=0$. Then the operator
$$
Tf(x)=\int \limits _{\R^d} f(y)|x-y|^{r-d}\, dy
$$
is bounded from $L_p(\R^d)$ in $L_q(\R^d)$.
\end{trma}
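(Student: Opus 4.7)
The plan is to prove Theorem \ref{adams_etc} by the classical maximal-function splitting argument, which yields a pointwise estimate of $Tf(x)$ in terms of the Hardy--Littlewood maximal function $Mf(x)$ and the quantity $\|f\|_p$; the conclusion then follows from the $L_p$-boundedness of $M$.

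Concretely, for each $x\in\R^d$ and $R>0$ (to be chosen later as a function of $x$ and $f$) I would split
$$
Tf(x)=\int_{|y-x|<R}f(y)|x-y|^{r-d}\,dy+\int_{|y-x|\ge R}f(y)|x-y|^{r-d}\,dy=:A_R(x)+B_R(x).
$$
For $A_R(x)$, I would decompose the inner ball into dyadic annuli $\{R2^{-k-1}\le|y-x|<R2^{-k}\}$, bound $|x-y|^{r-d}$ on each annulus by $(R2^{-k-1})^{r-d}$ (using $r<d$), and bound $\int_{|y-x|<R2^{-k}}|f|$ by $c_d(R2^{-k})^dMf(x)$. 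Summing the geometric series $\sum_{k\ge 0}2^{-kr}$, which converges because $r>0$, gives
$$
|A_R(x)|\lesssim_{d,r} R^rMf(x).
$$
For $B_R(x)$, H\"older's inequality with exponent $p$ followed by a polar-coordinate computation gives
$$
|B_R(x)|\le\|f\|_p\Bigl(\int_{|y-x|\ge R}|y-x|^{(r-d)p'}\,dy\Bigr)^{1/p'}\lesssim_{d,p} R^{r-d/p}\|f\|_p,
$$
the tail integral converging precisely when $(r-d)p'+d<0$, i.e.\ $r<d/p$, which is forced by $r=d/p-d/q$ and $q<\infty$.

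Next I would balance the two bounds by choosing $R^{d/p}=\|f\|_p/Mf(x)$ (the case $Mf(x)=0$ being trivial); the hypothesis $r/d=1/p-1/q$ yields the scaling identity $rp/d=1-p/q$, so that the two bounds coincide and produce
$$
|Tf(x)|\lesssim_{d,p,q}\|f\|_p^{1-p/q}(Mf(x))^{p/q}.
$$
Raising to the $q$-th power, integrating in $x$, and applying the Hardy--Littlewood maximal theorem $\|Mf\|_p\lesssim_p\|f\|_p$ (which requires $p>1$) gives $\|Tf\|_q\lesssim\|f\|_p^{1-p/q}\|Mf\|_p^{p/q}\lesssim\|f\|_p$, as required.

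The arithmetic is essentially forced once the two splitting estimates are in hand, so the main obstacle is structural rather than technical: one must verify that the single scaling relation $r/d+1/q-1/p=0$ together with $1<p<q<\infty$ simultaneously delivers the two convergence conditions $r>0$ (needed for the dyadic series in $A_R$) and $r<d/p$ (needed for the tail integral in $B_R$), and that the same relation forces the exponents in the balancing step to collapse into $1-p/q$ and $p/q$. Both verifications are immediate, and the only genuinely external input is the maximal inequality, which is classical.
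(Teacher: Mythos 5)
The paper does not give its own proof of this theorem; it is stated as Theorem~\ref{adams_etc} with citations to Sobolev, Adams, and the books of Maz'ya and Leoni. Your argument is the standard Hedberg maximal-function proof of the Hardy--Littlewood--Sobolev inequality, and it is correct in all details: the dyadic near-part bound $|A_R(x)|\lesssim_{d,r}R^rMf(x)$ uses $r>0$ for the geometric series; the H\"older far-part bound $|B_R(x)|\lesssim_{d,p,r}R^{r-d/p}\|f\|_p$ uses $r<d/p$, which is indeed forced by $r=d/p-d/q$ and $q<\infty$; the balancing choice $R^{d/p}=\|f\|_p/Mf(x)$ together with $rp/d=1-p/q$ yields the pointwise Hedberg inequality $|Tf(x)|\lesssim_{d,p,q}\|f\|_p^{1-p/q}\bigl(Mf(x)\bigr)^{p/q}$; and the Hardy--Littlewood maximal theorem (valid since $p>1$) then gives $\|Tf\|_q\lesssim\|f\|_p$. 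Since the paper supplies no proof to compare against, the only remark worth adding is that this route is arguably the most elementary one available, requiring only the maximal inequality as external input, whereas the cited sources proceed via rearrangements, interpolation, or capacity arguments.
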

Reshetnyak \cite{resh1, resh2} constructed the integral
representation for smooth functions defined on a John domain
$\Omega$ in terms of their derivatives of order $r$. We shall use
the following form of his result (see also \cite{vas_john}).
\begin{trma}
\label{reshteor} Let $\Omega\in {\bf FC}(a)$, let the point $x_*$,
the curves $\gamma_x$ and the numbers $T(x)$ be such as in
Definition \ref{fca}, and let $R_0={\rm
dist}_{\|\cdot\|_{l_2^d}}\, (x_*, \, \partial \Omega)$, $r\in \N$.
Then there exist measurable functions
$H_{\overline{\beta}}:\Omega\times \Omega\rightarrow \R$,
$\overline{\beta}=(\beta_1, \, \dots, \, \beta_d)\in \Z_+^d$,
$|\overline{\beta}|=r$, such that the inclusion ${\rm supp}\,
H_{\overline{\beta}}(x, \, \cdot)\subset \cup _{t\in [0, \,
T(x)]}B_{at}(\gamma_x(t))$ and the inequality
$|H_{\overline{\beta}}(x, \, y)| \underset{a,d,r}
{\lesssim}|x-y|^{r-d}$ hold for any $x\in \Omega$. Moreover, the
following representation holds:
$$
f(x)=\sum \limits _{|\overline{\beta}|=r}\int \limits _\Omega
H_{\overline{\beta}}(x, \, y) \frac{\partial ^r f(y)}{\partial
^{\overline{\beta}} y} \, dy, \quad f\in C^\infty(\Omega), \quad
f|_{B_{R_0/2}(x_*)}=0.
$$
\end{trma}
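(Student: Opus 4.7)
The plan is to combine Reshetnyak's integral representation (Theorem~\ref{reshteor}) with a Whitney-type decomposition of $\Omega$ (Theorem~\ref{whitney}) organized into a tree ${\cal T}$ encoding the John geometry, and then to reduce the weighted estimate to a discrete two-weighted Hardy inequality on ${\cal T}$ that generalizes Theorems~\ref{hardy_diskr}--\ref{rekurs} to the trees that arise here.

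First I would construct the polynomial $Pf$. Since $\Gamma\subset\partial\Omega$, $\mathrm{dist}_{|\cdot|}(x_*,\Gamma)$ is bounded below by a positive constant, so both $g$ and $v$ are comparable to constants on the ball $B_{R_0/2}(x_*)$. Fix a smooth nonnegative bump $\phi$ supported in $B_{R_0/2}(x_*)$ with $\int\phi=1$ and set
$$Pf(x)=\sum_{|\overline\beta|\le r-1}\frac{(-1)^{|\overline\beta|}}{\overline\beta!}\int\phi^{(\overline\beta)}(y)\,f(y)(x-y)^{\overline\beta}\,dy,$$
the averaged Taylor polynomial, written so that it makes sense for $f\in L_{q,v}(\Omega)$. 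The local boundedness of $v^{-1}$ on $\mathrm{supp}\,\phi$, together with the finite-dimensionality of ${\cal P}_{r-1}$, yields the continuity of $P:L_{q,v}(\Omega)\to{\cal P}_{r-1}(\Omega)$. For $f\in C^\infty(\Omega)$ an integration by parts turns this into the usual Sobolev identity that makes $f-Pf$ vanish in an averaged sense on $B_{R_0/2}(x_*)$, whereupon Theorem~\ref{reshteor} applied to $f-Pf$ gives the pointwise bound
$$|f(x)-Pf(x)|\underset{\mathfrak Z}{\lesssim}\int_{{\cal U}(x)}|x-y|^{r-d}|\nabla^r f(y)|\,dy,\qquad{\cal U}(x)=\bigcup_{t\in[0,T(x)]}B_{at}(\gamma_x(t)).$$

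Next I would discretize. Using Theorem~\ref{whitney} I decompose $\Omega$ into Whitney cubes $\Delta_j$ and build a tree ${\cal T}$ as in Section~4, following \cite{vas_john}: vertices correspond to the $\Delta_j$, the root is the cube containing $x_*$, and every John curve $\gamma_x$ is tracked by a path from the root to the vertex $\omega(x)$ containing $x$. By (\ref{card_sopr}) and (\ref{bfm}) the tube ${\cal U}(x)$ is covered, with bounded multiplicity depending only on $a$ and $d$, by the cubes $\Delta$ whose vertices lie on the segment $[\omega_0,\omega(x)]$. On each such $\Delta$, Hölder's inequality combined with (\ref{ghi_g0}), (\ref{def_h}) and the comparison $\mathrm{dist}(\Delta,\Gamma)\gtrsim 2^{-\mathbf m(\Delta)}$ gives
$$\int_\Delta|x-y|^{r-d}|\nabla^r f(y)|\,dy\underset{\mathfrak Z}{\lesssim} u_\Delta\,a_\Delta,\qquad a_\Delta:=\Bigl\|\tfrac{\nabla^r f}{g}\Bigr\|_{L_p(\Delta)},$$
with a scalar $u_\Delta$ built from $2^{-\mathbf m(\Delta)}$, $\Psi_g$, $\Lambda$ and the exponents $\beta_g,\theta,r$. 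Summing over the path and integrating the resulting estimate against $v$ then reduces the desired inequality to a discrete Hardy-type bound
$$\Bigl\|\Bigl(w_\omega\sum_{\omega'\le\omega}u_{\omega'}a_{\omega'}\Bigr)_{\omega\in{\bf V}({\cal T})}\Bigr\|_{l_q}\underset{\mathfrak Z}{\lesssim}\|(a_\omega)\|_{l_p},$$
in which $w_\omega$ is built from $\Psi_v,\Lambda,\beta_v,\theta$ and from the number of Whitney cubes contained in the $2^{-\mathbf m(\omega)}$-neighborhood of $\Gamma$, controlled by $h$ via the $h$-set property of $\Gamma$ (this is where assumption (\ref{muck}) will be needed to keep $w_\omega$ summable near $\Gamma$).

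The hard part will be proving this discrete Hardy inequality, since ${\cal T}$ is not regular: the branching at a vertex depends on the local geometry of $\Gamma$. The paper addresses this in Sections~5--6, first establishing the inequality on regular trees for $p=q$ by a convexity reduction to Theorem~\ref{hardy_diskr}, and then transferring it to the irregular ${\cal T}$ via a discrete analogue of Theorem~\ref{cr_har} and the recursion of Theorem~\ref{rekurs} (through the quantities $B_{\cal D}$ and $S_{\cal D}$ advertised in the introduction, where $S_{\cal D}$ for our ${\cal D}$ is majorized by the analogous quantity on a suitable regular auxiliary tree $\hat{\cal A}$); the case $p\ne q$ is then recovered by interpolating between $p=q$ and $p=\infty$ with Hölder's inequality. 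The most delicate bookkeeping will lie in the critical case (\ref{beta})(b), where the logarithmic weights $\rho_g,\rho_v,\tau$ in (\ref{phi_g}) and the strict bound (\ref{g0ag}) on $\alpha$ are precisely what make the tree Hardy constants finite. Finally, two easier reductions are needed at the geometric level: for $r>d$ the kernel $|x-y|^{r-d}$ is bounded and the estimate is elementary, while $r<d$ is reduced to the borderline case $r=d$ by applying Theorem~\ref{adams_etc} locally before the discretization, as anticipated in the outline of Section~7.
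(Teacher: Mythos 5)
Your proposal does not address the statement you were asked to prove. The statement is Theorem~\ref{reshteor} itself, Reshetnyak's integral representation: the existence of kernels $H_{\overline\beta}$ with ${\rm supp}\, H_{\overline\beta}(x,\cdot)\subset \bigcup_{t}B_{at}(\gamma_x(t))$, the pointwise bound $|H_{\overline\beta}(x,y)|\lesssim |x-y|^{r-d}$, and the reproducing formula for $f\in C^\infty(\Omega)$ vanishing on $B_{R_0/2}(x_*)$. What you have written instead is an outline of the proof of Theorem~\ref{main} (the weighted embedding theorem): you construct $Pf$, discretize $\Omega$ by Whitney cubes, set up the discrete Hardy inequality on the tree, and so on. Indeed your very first sentence, and again the displayed pointwise bound on $|f(x)-Pf(x)|$, invoke Theorem~\ref{reshteor} as an already-established tool, so read as a proof of Theorem~\ref{reshteor} your argument is circular.

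Note also that the paper does not contain a proof of Theorem~\ref{reshteor}; it is stated as a lettered background theorem, attributed to Reshetnyak \cite{resh1,resh2} with the specific form taken from \cite{vas_john}. A genuine proof would have to construct the kernels from the John geometry directly. The standard route is to write, for each $x$, a one-step averaged Taylor representation of $f$ around a ball $B_{at_0}(\gamma_x(t_0))$ near $x$, push the centre along the John curve $\gamma_x$ through the chain of overlapping balls $B_{at}(\gamma_x(t))$ until reaching $B_{R_0/2}(x_*)$ (where $f$ vanishes, killing the boundary term), and telescope; the resulting kernel is supported in $\bigcup_t B_{at}(\gamma_x(t))$ by construction, and the estimate $|H_{\overline\beta}(x,y)|\lesssim |x-y|^{r-d}$ follows from the fact that the point $y$ contributes only when $|y-\gamma_x(t)|\le at$, for which $t\asymp |x-y|$, together with a counting argument over the chain. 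None of this is present in your write-up, so as a proof of Theorem~\ref{reshteor} there is a complete gap.
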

The proof of the following lemma is straightforward and will be
omitted.
\begin{Lem}
\label{sum_lem} Let $\Phi:(0, \, +\infty) \rightarrow (0, \,
+\infty)$, $\rho:(0, \, +\infty) \rightarrow (0, \, +\infty)$ be
absolutely continuous functions and let $\lim \limits_{t\to
+0}\frac{t\Phi'(t)}{\Phi(t)}=0$, $\lim \limits_{y\to
+\infty}\frac{y\rho'(y)}{\rho(y)}=0$. Then for any $\varepsilon
>0$
$$
t^\varepsilon\underset {\varepsilon} {\lesssim} \Phi(t)
\underset{\varepsilon}{\lesssim} t^{-\varepsilon}, \quad \text{if}
\quad t\in (0, \, 1],\quad t^{-\varepsilon} \underset
{\varepsilon} {\lesssim} \rho(t) \underset {\varepsilon}
{\lesssim} t^\varepsilon, \quad \text{if} \quad t\in [1, \,
\infty).
$$

Let $\sigma\in \R$, $\mu<-1$. Then for any sequence
$\{k_j\}_{j=0}^l\subset \Z_+$ such that $k_0<k_1<\dots<k_l$, the
following estimates hold:
$$
\sum \limits _{j=0}^l 2^{\sigma
k_j}\Phi(2^{-k_j})\underset{\sigma,\Phi}{\lesssim} 2^{\sigma
k_0}\Phi(2^{-k_0}), \text{ if }\sigma <0,
$$
$$
\sum \limits _{j=0}^l 2^{\sigma
k_j}\Phi(2^{-k_j})\underset{\sigma,\Phi}{\lesssim} 2^{\sigma
k_l}\Phi(2^{-k_l}), \text{ if }\sigma >0,
$$
$$
\sum \limits _{j=0}^l k_j^\mu \rho(k_j)
\underset{\mu,\rho}{\lesssim} k_0^{1+\mu}\rho(k_0).
$$
\end{Lem}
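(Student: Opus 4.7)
The plan is to reduce everything to a single elementary observation: the hypothesis $\lim_{t\to +0} t\Phi'(t)/\Phi(t)=0$ is exactly the statement that the logarithmic derivative $\frac{d}{d(\log t)}\log \Phi(t) = t\Phi'(t)/\Phi(t)$ tends to $0$. Hence $\log \Phi(e^s)$ is a sublinear function of $s$ as $s\to -\infty$, which gives quantitative bounds like $|\log \Phi(t)-\log \Phi(s)|\le \varepsilon|\log(s/t)|$ for $s,t$ small. The analogous statement for $\rho$ (with $y\to +\infty$) is identical after the change of variables $y=e^s$, $s\to +\infty$.

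For the pointwise estimates (first part), I would fix $\varepsilon>0$, choose $t_0\in(0,1]$ so that $|u\Phi'(u)/\Phi(u)|<\varepsilon$ for $u\in(0,t_0]$, and for $t\in(0,t_0]$ write
\[
\log \frac{\Phi(t)}{\Phi(t_0)} = \int_{t_0}^t \frac{\Phi'(u)}{\Phi(u)}\,du = -\int_t^{t_0}\frac{u\Phi'(u)}{\Phi(u)}\,\frac{du}{u},
\]
whose modulus is at most $\varepsilon\log(t_0/t)$. This yields $t^{\varepsilon}\Phi(t_0)t_0^{-\varepsilon}\le \Phi(t)\le t^{-\varepsilon}\Phi(t_0)t_0^\varepsilon$ on $(0,t_0]$. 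On the compact interval $[t_0,1]$ the positive absolutely continuous function $\Phi$ is bounded above and below by positive constants, so the same two-sided bound extends to $(0,1]$ after absorbing constants depending on $\varepsilon$ and $\Phi$. The argument for $\rho$ on $[1,\infty)$ is word-for-word the same with the roles of $0$ and $\infty$ swapped.

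For the geometric sums I would upgrade the pointwise estimate into a comparison: for any $\varepsilon>0$ and $t\le s$ in $(0,1]$,
\[
\Bigl(\tfrac{s}{t}\Bigr)^{-\varepsilon}\underset{\varepsilon,\Phi}{\lesssim}\frac{\Phi(t)}{\Phi(s)}\underset{\varepsilon,\Phi}{\lesssim}\Bigl(\tfrac{s}{t}\Bigr)^{\varepsilon},
\]
obtained by the same integral computation with $t_0$ replaced by $s$ (handling the "non-small" regime by a covering by finitely many pieces). For $\sigma<0$ I pick $\varepsilon=-\sigma/2$, apply this to the pair $(2^{-k_j},2^{-k_0})$ with $k_j\ge k_0$, and obtain $2^{\sigma k_j}\Phi(2^{-k_j})\lesssim 2^{\sigma k_0}\Phi(2^{-k_0})\cdot 2^{(\sigma+\varepsilon)(k_j-k_0)}$. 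Since $k_j\ge k_0+j$ and $\sigma+\varepsilon=\sigma/2<0$, summing over $j$ yields a convergent geometric series bounded by its first term, giving the desired inequality. For $\sigma>0$ the role of the reference point is shifted from $k_0$ to $k_l$ and the exponent $\sigma-\varepsilon=\sigma/2>0$ produces a geometric sum dominated by the last term.

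The final sum is treated analogously with $\rho$ at infinity. Choose $\varepsilon\in(0,-\mu-1)$ (possible since $\mu<-1$), so that $\mu+\varepsilon<-1$. The comparison estimate yields $\rho(k_j)\lesssim \rho(k_0)(k_j/k_0)^\varepsilon$ for $k_j\ge k_0\ge 1$, and hence
\[
\sum_{j=0}^l k_j^\mu \rho(k_j)\underset{\mu,\rho}{\lesssim}\rho(k_0)k_0^{-\varepsilon}\sum_{j=0}^l k_j^{\mu+\varepsilon}\underset{\mu,\rho}{\lesssim}\rho(k_0)k_0^{-\varepsilon}\sum_{n=k_0}^\infty n^{\mu+\varepsilon}\underset{\mu,\rho}{\lesssim}k_0^{1+\mu}\rho(k_0),
\]
using that $\{k_j\}$ are strictly increasing integers $\ge k_0$ and estimating the tail sum by an integral. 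There is no real obstacle here; the only thing that needs care is the implicit standing assumption $k_0\ge 1$ so that $k_0^\mu$ makes sense, which is how the lemma will be applied, and the bookkeeping of absorbing $\varepsilon$-dependent constants into the $\underset{\sigma,\Phi}{\lesssim}$ and $\underset{\mu,\rho}{\lesssim}$ notation.
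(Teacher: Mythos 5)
The paper does not prove this lemma (it explicitly says ``the proof is straightforward and will be omitted''), so there is no argument of the author's to compare against. Your proof is correct and is the natural one: integrating the logarithmic derivative $t\Phi'(t)/\Phi(t)$ gives the two-sided $t^{\pm\varepsilon}$ bounds and the two-point comparison $(s/t)^{-\varepsilon}\underset{\varepsilon,\Phi}{\lesssim}\Phi(t)/\Phi(s)\underset{\varepsilon,\Phi}{\lesssim}(s/t)^{\varepsilon}$, and choosing $\varepsilon$ small relative to $|\sigma|$ (resp.\ $-\mu-1$) turns the sums into a geometric series (resp.\ a convergent $p$-series tail) dominated by the correct endpoint term; your remark that $k_0\ge 1$ is implicitly assumed (otherwise $\rho(k_0)$ and $k_0^\mu$ are undefined) is a fair and necessary observation.
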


\section{Construction of the partition of the tree}
Let $\Theta\subset \Xi\left(\left[-\frac 12, \, \frac
12\right]^d\right)$ be a set of non-overlapping cubes.

\begin{Def}
Let ${\cal G}$ be a graph, and let $F:{\bf V}({\cal G})
\rightarrow \Theta$ be a one-to-one mapping. We say that $F$ is
consistent with the structure of the graph ${\cal G}$ if the
following condition holds: for any adjacent vertices $\xi'$,
$\xi''\in {\bf V}({\cal G})$ the set $\Gamma _{\xi', \xi''}
:=F(\xi')\cap F(\xi'')$ has dimension $d-1$.
\end{Def}

Let $({\cal T}, \, \xi_*)$ be a tree, and let $F:{\bf V}({\cal T})
\rightarrow \Theta$ be a one-to-one mapping consistent with the
structure of the tree ${\cal T}$. For any adjacent vertices
$\xi'$, $\xi''$, we set $\mathaccent '27 \Gamma
_{\xi',\xi''}=\inter _{d-1}\Gamma _{\xi',\xi''}$, and for each
subtree ${\cal T}'$ of ${\cal T}$, we put
\begin{align}
\label{def_dom_by_tree} \Omega _{{\cal T'},F}=\left(\cup _{\xi\in
{\bf V}({\cal T'})}\inter F(\xi)\right) \cup \left(\cup
_{(\xi',\xi'')\in {\bf E}({\cal T'})}\mathaccent '27 \Gamma
_{\xi',\xi''}\right).
\end{align}
For $\xi \in {\bf V}({\cal T})$, $\Delta=F(\xi)$, denote
$m_\xi={\bf m}(\Delta)$, $\Omega_{\le \Delta}=\Omega_{[\xi_*, \,
\xi],F}$.

Let $\Theta(\Omega)$ be a Whitney covering of $\Omega$ (see
Theorem \ref{whitney}). The following lemma is proved in
\cite{vas_john}.
\begin{Lem}
\label{cor_omega_t} Let $\Omega\subset \left[-\frac 12, \, \frac
12\right]^d$, $\Omega\in {\bf FC}(a)$. Then there exist a tree
${\cal T}$ and a one-to-one mapping $F:{\bf V}({\cal T})
\rightarrow \Theta(\Omega)$ consistent with the structure of
${\cal T}$ and which satisfies the following properties:
\begin{enumerate}
\item for any subtree ${\cal T}'$ of ${\cal T}$,
\begin{align}
\label{fghjkl} \Omega_{{\cal T}',F}\in {\bf FC}(b_*), \text{ where
}b_*=b_*(a, \, d)>0;
\end{align}
\item if $x\in F(\xi)$, then a curve $\gamma_x$ from Definition \ref{fca}
can be chosen so that $B_{b_*t}(\gamma_x(t))\subset \Omega_{\le
F(\xi)}$ for any $t\in [0, \, T(x)]$; if $\xi_{{\cal T}'}$ is a
minimal vertex of ${\cal T}'$, then the center of the cube
$F(w_{{\cal T}'})$ can be taken as a point $x_*$ from Definition
\ref{fca} with $\Omega_{{\cal T}',F}\in {\bf FC}(b_*)$.
\end{enumerate}
\end{Lem}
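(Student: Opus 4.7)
The plan is to build $\mathcal{T}$ by rooting it at the Whitney cube containing (or nearest to) the John center $x_*$ of $\Omega$, and then assigning a parent to every other Whitney cube by ``flowing'' along John curves toward $x_*$. Concretely, for each $\Delta\in\Theta(\Omega)$ I would pick a representative point $y_\Delta\in\Delta$ and let $\gamma_{y_\Delta}$ be the curve from Definition \ref{fca} joining $y_\Delta$ to $x_*$. Let $t_\Delta$ be the last time at which $\gamma_{y_\Delta}$ exits $\Delta$; by \eqref{card_sopr} and \eqref{bfm}, $\gamma_{y_\Delta}(t_\Delta)$ lies on a common $(d-1)$-face of $\Delta$ with a uniquely determined adjacent Whitney cube $\Delta'$ (after fixing a deterministic tie-breaking rule in case of ambiguity). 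Declare $\Delta'$ to be the parent of $\Delta$. Acyclicity is forced because following parents strictly decreases the remaining arclength to $x_*$ while \eqref{bfm} keeps the Whitney scale bounded; a standard argument then promotes the parent relation to a spanning tree structure on which $F$ is one-to-one and consistent with the adjacency.

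To prove property (1), fix a subtree $\mathcal{T}'$ with minimal vertex $\xi_{\mathcal{T}'}$ and write $\Delta_0=F(\xi_{\mathcal{T}'})$. Take the center $c_0$ of $\Delta_0$ as the John center of $\Omega_{\mathcal{T}',F}$. For any $x\in F(\xi)\subset\Omega_{\mathcal{T}',F}$ I would build $\tilde\gamma_x$ as the arclength parametrization of the polygonal curve that starts at $x$, moves to the center of $F(\xi)$, then traverses the centers of the cubes $F(\xi_k)$ along the unique $\mathcal{T}'$-path from $\xi$ to $\xi_{\mathcal{T}'}$, crossing each shared face $\mathaccent'27\Gamma_{\xi_{k-1},\xi_k}$ at a point sitting far from $\partial F(\xi_{k-1})\cup\partial F(\xi_k)$. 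The required ball condition $B_{b_*t}(\tilde\gamma_x(t))\subset \Omega_{\mathcal{T}',F}$ should then follow from two ingredients: \eqref{bfm} forces the side length of the cube currently traversed at parameter $t$ to be $\asymp t$ (measured from the endpoint $x$), and the shared face between consecutive cubes in the tree path has $(d-1)$-dimensional interior lying inside $\Omega_{\mathcal{T}',F}$, which keeps the tube from ever touching the boundary.

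Property (2) is almost automatic from the construction: in the special case $\mathcal{T}'=[\xi_*,\xi]$, the chain of ancestors is precisely the set of Whitney cubes that contain the image of $\gamma_{y_\Delta}$ (by the way parents were defined), so the polygonal curve constructed above stays in $\Omega_{\le F(\xi)}$, and the tube condition is the same geometric estimate as in property (1). The last sentence of the statement, about taking the center of $F(\xi_{\mathcal{T}'})$ as the John center, is exactly the choice of $c_0$ we made.

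The main obstacle I anticipate is the uniform lower bound $b_*=b_*(a,d)>0$. One must control, along a concatenation that may cross arbitrarily many Whitney faces, both the arclength parameter and the distance to $\partial\Omega_{\mathcal{T}',F}$, the latter being in general strictly smaller than $\mathrm{dist}(\cdot,\partial\Omega)$ because Whitney faces \emph{not} belonging to $\mathcal{T}'$ now lie in $\partial\Omega_{\mathcal{T}',F}$. The key auxiliary estimate, which I would extract from \eqref{bfm} and the geometry of Whitney decomposition, is that the tree-distance from $\xi$ to $\xi_{\mathcal{T}'}$ measured in units of the local Whitney side length is comparable to the Euclidean length of $\tilde\gamma_x$; this is precisely what lets the local geometric estimate transfer uniformly across scales and yields a single constant $b_*$ depending only on $a$ and $d$.
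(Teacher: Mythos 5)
The paper does not prove this lemma; it cites \cite{vas_john} for the proof, so there is no in-paper argument to compare against. Evaluating your proposal on its own merits, the central difficulty is exactly the one you flag at the end, and your construction does not resolve it.

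The parent assignment $\Delta\mapsto\Delta'$ is made using the curve $\gamma_{y_\Delta}$ for the cube $\Delta$ \emph{alone}. When you then pass to $\Delta'$ and look for its parent, you switch to a \emph{different} curve $\gamma_{y_{\Delta'}}$. Consequently the sentence ``following parents strictly decreases the remaining arclength to $x_*$'' is not measuring anything coherent: the remaining arclength of $\gamma_{y_\Delta}$ past $t_\Delta$ and the remaining arclength of $\gamma_{y_{\Delta'}}$ past $t_{\Delta'}$ live on unrelated curves, and neither dominates the other. So acyclicity is genuinely unproved: a two-cycle $\Delta_1\leftrightarrow\Delta_2$, where $\gamma_{y_{\Delta_1}}$ last exits $\Delta_1$ into $\Delta_2$ while $\gamma_{y_{\Delta_2}}$ last exits $\Delta_2$ into $\Delta_1$, is perfectly compatible with both curves ending at $x_*$. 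Likewise infinite descending chains are not ruled out, because (\ref{bfm}) permits $m_\xi$ to increase by $2$ at every step. Without acyclicity and termination there is no tree, and $F$ is not even well-defined as a consistent bijection.

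The second gap is the claim that ``the chain of ancestors is precisely the set of Whitney cubes that contain the image of $\gamma_{y_\Delta}$.'' This is false in your construction: only the first ancestor $\Delta'$ is on $\gamma_{y_\Delta}$; the grandparent is chosen along $\gamma_{y_{\Delta'}}$, which need not coincide with $\gamma_{y_\Delta}$ beyond $\Delta'$. This is not a cosmetic slip, because it is precisely what your quantitative estimate requires. For the polygonal curve through ancestor centers to satisfy $B_{b_*t}(\tilde\gamma_x(t))\subset\Omega_{{\cal T}',F}$ with $b_*=b_*(a,d)$, you need the arclength from $x$ to the current kink to be $\lesssim$ the local Whitney scale, i.e.\ $\sum_{i\le j}2^{-m_{\xi_i}}\lesssim 2^{-m_{\xi_j}}$ along the ancestor path $\xi=\xi_0<\xi_1<\dots$. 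That geometric-sum bound is exactly what the John property $B_{at}(\gamma_x(t))\subset\Omega$ gives \emph{for the cube sequence hit by a single John curve} (scale $\gtrsim at$ at arclength $t$). It does \emph{not} follow from (\ref{bfm}) alone, which only forbids jumps larger than two Whitney generations and would allow, say, a long ancestor path of cubes of roughly constant size, for which $\sum_{i\le j}2^{-m_{\xi_i}}\asymp (j+1)2^{-m_{\xi_j}}$ and the John constant degrades to $0$. So the uniform $b_*$ is not obtained: the ``key auxiliary estimate'' you mention must be proved, and proving it requires the ancestor chain to actually be the cube chain of a single John curve (or to be comparable to one), which the parent-function construction does not enforce.

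In short, the two things that make the lemma nontrivial — that the parent relation terminates in a spanning tree, and that tree paths inherit the $t\lesssim$ (local Whitney scale) estimate from the flexible-cone condition — are both asserted rather than proved, and the second fails for the construction as described. A correct proof has to either build the tree so that each root-to-vertex path is, by construction, comparable to the Whitney chain of one John curve (and verify that this can be done globally and consistently), or replace the polygonal-curve argument by one that does not need the geometric-sum estimate.
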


For $\xi\in {\bf V}({\cal T})$, we set $\Omega_\xi=\Omega_{{\cal
T}_{\xi},F}$; the number $k_\xi\in \Z_+$ is chosen so that
\begin{align}
\label{2kw} 2^{-k_\xi}\le {\rm dist}_{|\cdot|}(F(\xi), \, \Gamma)<
2^{-k_\xi+1}.
\end{align}
By Theorem \ref{whitney},
\begin{align}
\label{2mxi} 2^{-m_\xi}\underset{d}{\asymp} {\rm
dist}_{|\cdot|}(F(\xi), \, \partial \Omega) \le {\rm
dist}_{|\cdot|}(F(\xi), \, \Gamma)\asymp 2^{-k_\xi};
\end{align}
hence, there exists $\vartheta(d)\in \Z_+$ such that
\begin{align}
\label{kwmw} k_\xi\le m_\xi+\vartheta(d).
\end{align}
Let $z_\xi\in F(\xi)$ be such that ${\rm dist}_{|\cdot|}(z_\xi, \,
\Gamma)={\rm dist}_{|\cdot|}(F(\xi), \, \Gamma)$, and let $\tilde
z_\xi$ be a center of the cube $F(\xi)$. Then the first relation
in (\ref{2mxi}) together with
\begin{align}
\label{zx} |z_\xi-\tilde z_\xi|\le 2^{-m_\xi}
\end{align}
imply that for any $x\in \Omega_\xi$
$$
|x-z_\xi|\le {\rm diam}_{|\cdot|}\Omega_\xi \stackrel
{(\ref{diam_dist})}{\underset{a,d}{\lesssim}} {\rm dist}_{|\cdot|}
(\tilde z_\xi, \, \partial \Omega)\le {\rm dist}_{|\cdot|}(F(\xi),
\, \partial \Omega)+|z_\xi-\tilde z_\xi|\stackrel{(\ref{2mxi}),
(\ref{zx})}{\underset{d}{\lesssim}} 2^{-m_\xi}.
$$
Hence, there exists $c(a, \, d)>0$ such that
\begin{align}
\label{xxiw} |x-z_\xi|\le c(a, \, d)\cdot 2^{-m_\xi}, \;\;x\in
\Omega_\xi.
\end{align}
Prove that
\begin{align} \label{dist_xg} {\rm dist}_{|\cdot|}(x,
\, \Gamma)\underset{d}{\asymp}2^{-k_\xi}
\end{align}
for any $x\in F(\xi)$. Indeed,
$$
2^{-k_\xi}\stackrel{(\ref{2kw})}{\le} {\rm dist}_{|\cdot|}(F(\xi),
\, \Gamma)\le {\rm dist}_{|\cdot|}(x, \, \Gamma)\le {\rm dist}
_{|\cdot|}(F(\xi), \,\Gamma)+|x-z_\xi| \stackrel {(\ref{2kw})}
{\underset{d}{\lesssim}}
$$
$$
\lesssim 2^{-k_\xi+1}+ 2^{-m_\xi}\stackrel{(\ref{kwmw})}
{\underset{d}{\lesssim}} 2^{-k_\xi}.
$$

Denote
\begin{align}
\label{hatw} \hat{\bf W}=\{\xi\in {\bf V}({\cal T}):\, m_\xi\le
k_\xi+1+\log c(a, \, d)\}.
\end{align}
From (\ref{kwmw}) it follows that for any $\xi\in \hat{\bf W}$
\begin{align}
\label{mwas_kw} 2^{-m_\xi}\underset{a,d}{\asymp} 2^{-k_\xi}.
\end{align}

Let $\xi\notin \hat{\bf W}$. We show that for any $x\in
\Omega_\xi$
\begin{align}
\label{nw_dg} {\rm dist}_{|\cdot|}(x, \,
\Gamma)\underset{a,d}{\asymp} 2^{-k_\xi}.
\end{align}
Indeed,
$$
{\rm dist}_{|\cdot|}(x, \, \Gamma)\le {\rm dist}_{|\cdot|}(z_\xi,
\, \Gamma)+|x-z_\xi|
\stackrel{(\ref{2kw}),(\ref{xxiw})}{\underset{a,d}{\lesssim}}
2^{-k_\xi}+2^{-m_\xi}\stackrel{(\ref{kwmw})}{\underset{d}{\lesssim}}
2^{-k_\xi},
$$
$$
{\rm dist}_{|\cdot|}(x, \, \Gamma)\ge {\rm dist}_{|\cdot|}(z_\xi,
\, \Gamma)-|x-z_\xi| \stackrel{(\ref{2kw}),(\ref{xxiw})}{\ge}
2^{-k_\xi}-c(a, \, d)\cdot 2^{-m_\xi}\stackrel{(\ref{hatw})}{\ge}
2^{-k_\xi-1}.
$$
Denote
\begin{align}
\label{h_w_nu} \hat{\bf W}_\nu=\{\xi\in \hat{\bf W}:\,k_\xi=\nu\}.
\end{align}
Then (\ref{xxiw}) and (\ref{mwas_kw}) imply that for any $\xi\in
\hat{\bf W}_\nu$ and for any tree ${\cal T}'\subset {\cal T}_\xi$
rooted at $\xi$
\begin{align}
\label{diam_ot} {\rm diam}_{|\cdot|}\, \Omega_{{\cal T}', \, F}
\underset{a,d}{\asymp} 2^{-\nu}.
\end{align}
\begin{Lem}
\label{razb_der} There exist a partition of the tree ${\cal T}$
into subtrees ${\cal T}_{k,i}$ with minimal vertices $\xi_{k,i}$,
$k\in \Z_+$, $i\in I_k$, $I_k\ne \varnothing$, and numbers
$\nu_k\in \N$, satisfying the following conditions:
\begin{enumerate}
\item $\nu_0<\nu_1<\dots<\nu_k<\dots$;
\item $\xi_{k,i}\in \hat{\bf W}_{\nu_k}$;
\item ${\rm dist}_{|\cdot|}(x, \, \Gamma)\underset{a,d}{\asymp}
2^{-\nu_k}$ for any $x\in \Omega_{{\cal T}_{k,i},F}$;
\item if $\xi_{k',i'}<\xi_{k,i}$, then $k'<k$.
\end{enumerate}
\end{Lem}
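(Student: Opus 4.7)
I would prove the lemma by an inductive construction of a set $\mathbf{R}\subset\hat{\mathbf{W}}$ of distinguished ``root'' vertices and then take each $\mathcal{T}_{k,i}$ to be the ``block'' attached to one root.

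\textbf{Base step.} First I would verify that $\xi_*\in\hat{\mathbf{W}}$. By part (2) of Lemma \ref{cor_omega_t} the center of $F(\xi_*)$ can be chosen as the point $x_*$ of Definition \ref{fca}, so by \eqref{diam_dist} and $\Gamma\subset\partial\Omega$ we have $\mathrm{dist}_{|\cdot|}(F(\xi_*),\Gamma)\asymp\mathrm{diam}_{|\cdot|}\,\Omega\asymp 2^{-m_{\xi_*}}$, hence $k_{\xi_*}\asymp m_{\xi_*}$ and $\xi_*\in\hat{\mathbf{W}}$. I set $\xi_{0,1}:=\xi_*$, $\nu_0:=k_{\xi_*}$, $I_0:=\{1\}$, and place $\xi_*$ in $\mathbf{R}$.

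\textbf{Recursive selection.} Given $\xi\in\mathbf{R}$, I would add to $\mathbf{R}$ every vertex $\eta\in\hat{\mathbf{W}}$ that is minimal (in the partial order of $\mathcal{T}$) with respect to the two properties $\eta>\xi$, $k_\eta>k_\xi$, and the auxiliary condition that no $\eta'\in\hat{\mathbf{W}}$ with $\xi<\eta'<\eta$ satisfies $k_{\eta'}>k_\xi$. Enumerating the distinct values in $\{k_\xi:\xi\in\mathbf{R}\}$ as $\nu_0<\nu_1<\dots$, I put $I_k:=\{i:\xi_{k,i}\in\mathbf{R}\cap\hat{\mathbf{W}}_{\nu_k}\}$ and define $\mathcal{T}_{k,i}$ to be the set of $\zeta\ge\xi_{k,i}$ with no element of $\mathbf{R}\setminus\{\xi_{k,i}\}$ strictly between $\xi_{k,i}$ and $\zeta$. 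These sets are visibly connected subtrees and they partition $\mathbf{V}(\mathcal{T})$ since every $\zeta$ has a unique maximal $\mathbf{R}$-ancestor (the root $\xi_*$ always serves).

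\textbf{Verification of (1), (2), (4).} Properties (1) and (2) are immediate from the enumeration. For property (4), the recursive construction of $\mathbf{R}$ guarantees that whenever $\xi_{k',i'}<\xi_{k,i}$, the vertex $\xi_{k,i}$ was added to $\mathbf{R}$ as a descendant of $\xi_{k',i'}$ (or of a later chain), so $k_{\xi_{k,i}}>k_{\xi_{k',i'}}$, which gives $\nu_k>\nu_{k'}$ and hence $k>k'$.

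\textbf{The main obstacle: property (3).} I would prove $\mathrm{dist}_{|\cdot|}(x,\Gamma)\asymp 2^{-\nu_k}$ for $x\in\Omega_{\mathcal{T}_{k,i},F}$ by two separate estimates. The upper bound follows from \eqref{diam_ot} applied to $\mathcal{T}_{k,i}\subset\mathcal{T}_{\xi_{k,i}}$: the diameter of $\Omega_{\mathcal{T}_{k,i},F}$ is $\lesssim 2^{-\nu_k}$, whence the triangle inequality gives $\mathrm{dist}(x,\Gamma)\le\mathrm{dist}(z_{\xi_{k,i}},\Gamma)+|x-z_{\xi_{k,i}}|\lesssim 2^{-\nu_k}$. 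For the lower bound, $x\in F(\zeta)$ for some $\zeta\in\mathcal{T}_{k,i}$ and by \eqref{dist_xg} we have $\mathrm{dist}(x,\Gamma)\asymp 2^{-k_\zeta}$, so it suffices to show $k_\zeta\le\nu_k+O(1)$ uniformly for $\zeta\in\mathcal{T}_{k,i}$. When $\zeta\in\hat{\mathbf{W}}$, the construction of $\mathbf{R}$ forces $k_\zeta\le\nu_k$ directly. When $\zeta\notin\hat{\mathbf{W}}$, I would propagate the bound along the path from $\xi_{k,i}$ to $\zeta$ using \eqref{bfm} together with the defining inequality $m_\zeta>k_\zeta+1+\log c(a,d)$ of $\hat{\mathbf{W}}^c$; this gives $m_{\zeta'}\le m_{\xi_{k,i}}+2\le\nu_k+O(1)$ at each step $\zeta'$ through non-$\hat{\mathbf{W}}$ vertices, and hence $k_{\zeta'}<m_{\zeta'}-1-\log c(a,d)\le\nu_k+O(1)$. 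The delicate point is that, by the minimality in the construction of $\mathbf{R}$, a path in $\mathcal{T}_{k,i}$ cannot continue indefinitely before either terminating in $\mathcal{T}$ or encountering a $\hat{\mathbf{W}}$-vertex of $k$-value exceeding $\nu_k$, which would become a new root in $\mathbf{R}$ and hence cut the path; this is the step that requires the most care, combining the geometry of $\Gamma$ as an $h$-set with the Whitney adjacency estimate (\ref{bfm}).
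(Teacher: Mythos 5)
Your construction of the set $\mathbf{R}$ of roots and the blocks $\mathcal{T}_{k,i}$ is essentially the paper's own strategy (the paper builds $\mathcal{S}(\mathcal{T}_{\hat\xi})$ as the maximal subtree rooted at $\hat\xi\in\hat{\mathbf{W}}_\nu$ avoiding $\bigcup_{l>\nu}\hat{\mathbf{W}}_l$ and recurses; the new roots are exactly your minimal $\hat{\mathbf{W}}$-vertices with larger $k$-value). Your base step, the argument that the blocks partition $\mathbf{V}(\mathcal{T})$, and properties (1), (2), (4) are fine. The gap is precisely where you flagged it, in the lower bound of property (3) for $\zeta\notin\hat{\mathbf{W}}$, and your proposed fix does not work.

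The step ``$m_{\zeta'}\le m_{\xi_{k,i}}+2$ at each step $\zeta'$ through non-$\hat{\mathbf{W}}$ vertices'' is false: \eqref{bfm} only controls one adjacency at a time, so along a path of length $L$ inside $\hat{\mathbf{W}}^c$ the $m$-value can grow by up to $2L$, which is unbounded. (Geometrically this is real: once a Whitney cube lies well away from $\Gamma$ relative to its size, its descendants can drift toward some other part of $\partial\Omega$, with $m_\zeta\to\infty$ while $\mathrm{dist}(x,\Gamma)$ stays essentially constant.) So $m_{\zeta}$ cannot be bounded by $\nu_k+O(1)$, and the deduction $k_\zeta<m_\zeta-1-\log c(a,d)\le\nu_k+O(1)$ collapses. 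The correct tool — which the paper invokes at exactly this point — is the ``whole-subtree'' estimate \eqref{nw_dg}: if $\hat\zeta\notin\hat{\mathbf{W}}$, then $\mathrm{dist}_{|\cdot|}(x,\Gamma)\asymp 2^{-k_{\hat\zeta}}$ for \emph{all} $x\in\Omega_{\hat\zeta}$, not merely $x\in F(\hat\zeta)$; this replaces any propagation along the path. Concretely: let $\hat\eta:=\max\{\hat{\mathbf{W}}\cap[\xi_{k,i},\zeta]\}$, so $k_{\hat\eta}\le\nu_k$ by the construction of $\mathbf{R}$; let $\hat\zeta\in\mathbf{V}_1(\hat\eta)\cap[\hat\eta,\zeta]$, which is not in $\hat{\mathbf{W}}$. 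Since $F(\zeta)\subset\Omega_{\hat\zeta}$, \eqref{nw_dg} gives $\mathrm{dist}(x,\Gamma)\asymp 2^{-k_{\hat\zeta}}$ for $x\in F(\zeta)$, and the chain
\[
2^{-k_{\hat\zeta}}\gtrsim 2^{-m_{\hat\zeta}}\ge 2^{-m_{\hat\eta}-2}\asymp 2^{-k_{\hat\eta}}\ge 2^{-\nu_k}
\]
(using \eqref{kwmw}, \eqref{bfm} for the single adjacency $\hat\eta\sim\hat\zeta$, and \eqref{mwas_kw}) finishes the lower bound without tracking $m_{\zeta'}$ or $k_{\zeta'}$ along the long path. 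You cite \eqref{dist_xg} (which applies only on the cube $F(\zeta)$) but never \eqref{nw_dg}; that is the missing idea, and without it the argument does not close.
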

\begin{proof}
Let $\nu\in \Z_+$, $\hat \xi\in \hat{\bf W}_\nu$. Denote by
$\mathfrak{T}(\hat\xi)$ a set of subtrees ${\cal T}'\subset {\cal
T}_{\hat \xi}$ with the minimal vertex $\hat \xi$ such that
\begin{align}
\label{tw_def} {\bf V}({\cal T}')\bigcap \left(\bigcup_{l\ge
\nu+1}\hat {\bf W}_l\right)=\varnothing
\end{align}
(this set is nonempty, since $\{\hat \xi\} \in
\mathfrak{T}(\hat\xi)$). Denote by ${\cal S}({\cal T}_{\hat \xi})$
a subtree in ${\cal T}_{\hat \xi}$ such that ${\bf V}({\cal
S}({\cal T}_{\hat \xi}))=\cup _{S\in \mathfrak{T}(\xi)} {\bf
V}(S)$. Then ${\cal S}({\cal T}_{\hat \xi})\in
\mathfrak{T}(\hat\xi)$.

Prove that there exists $\hat\nu=\hat\nu(a, \, d)\in \N$ such that
for any $x\in \Omega_{{\cal S}({\cal T}_{\hat \xi}),F}$
$$
2^{-\nu -\hat\nu}\le {\rm dist}_{|\cdot|}(x, \, \Gamma)\le 2^{-\nu
+\hat\nu}.
$$
Indeed,
$$
{\rm dist}_{|\cdot|}(x, \, \Gamma)\le |x-z_{\hat \xi}|+{\rm
dist}_{|\cdot|}(z_{\hat \xi}, \, \Gamma)
\stackrel{(\ref{2kw}),(\ref{xxiw}),(\ref{h_w_nu})}{\underset{a,d}{\lesssim}}
2^{-m_{\hat \xi}}+2^{-\nu}
\stackrel{(\ref{kwmw})}{\underset{d}{\lesssim}}2^{-\nu}.
$$
Prove the estimate from below. Let $x\in F(\eta)$, $\eta\in {\bf
V}({\cal S}({\cal T}_{\hat \xi}))$. Set
$$
\hat \eta=\max \{\hat {\bf W}\cap [\hat \xi, \, \eta]\}.
$$
Then $\hat \eta\in \hat{\bf W}_j$ for some $j\in \Z_+$; since
${\cal S}({\cal T}_{\hat \xi})\in \mathfrak{T}(\hat \xi)$, we have
$j\le \nu$. If $\eta=\hat \eta$, then
\begin{align}
\label{dfu_2nu} {\rm dist}_{|\cdot|}(F(\eta), \, \Gamma)
\stackrel{(\ref{2kw})} {\ge} 2^{-j}\ge 2^{-\nu}.
\end{align}
Let $\eta>\hat \eta$, $\hat \zeta\in [\hat \eta, \, \eta]\cap {\bf
V}_1(\hat \eta)$. Then $\hat \zeta\notin \hat {\bf W}$, $\dim
(F(\hat \eta)\cap F(\hat \zeta))=d-1$, and for any $x\in F(\eta)$
$$
{\rm dist}_{|\cdot|}(x, \, \Gamma)\stackrel{(\ref{nw_dg})}
{\underset{a,d}{\asymp}}2^{-k_{\hat \zeta}}
\stackrel{(\ref{kwmw})} {\underset{d}{\gtrsim}} 2^{-m_{\hat
\zeta}} \stackrel{(\ref{bfm})} {\ge} 2^{-m_{\hat \eta}-2}
\stackrel{(\ref{mwas_kw})} {\underset{a,d} {\asymp}} 2^{-k_{\hat
\eta}} \stackrel{(\ref{h_w_nu}),(\ref{dfu_2nu})}{\ge} 2^{-\nu}.
$$

Let $\xi_0$ be a minimal vertex of the tree ${\cal T}$. Prove that
$\xi_0\in \hat {\bf W}$. Indeed, otherwise (\ref{nw_dg}) imply
that ${\rm dist}_{|\cdot|}(x, \, \Gamma)\underset{a,d}{\asymp}
2^{-k_{\xi_0}}$ for any $x\in \Omega_{{\cal T},F}$. Hence, $\Gamma
\not \subset \partial \Omega$, which leads to a contradiction.

The further arguments are the same as the arguments in Lemma 2
from \cite{vas_sing}.
\end{proof}

\begin{Sta}
\label{dif_nu} Let $\xi_{k,i}<\xi_{k',i'}$, $\{\xi:\, \xi_{k,i}\le
\xi<\xi_{k',i'}\}\subset {\bf V}({\cal T}_{k,i})$. Then
$\nu_{k'}\le \nu_k+\overline{s}$, with $\overline{s}=\overline{s}
(a, \, d)$.
\end{Sta}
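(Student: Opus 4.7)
The plan is to establish the lower bound ${\rm dist}_{|\cdot|}(F(\xi_{k',i'}),\Gamma)\underset{a,d}{\gtrsim} 2^{-\nu_k}$; once this is in hand, applying (\ref{2kw}) to $\xi_{k',i'}$ gives $2^{-\nu_{k'}+1}>{\rm dist}_{|\cdot|}(F(\xi_{k',i'}),\Gamma)\underset{a,d}{\gtrsim}2^{-\nu_k}$, and a single logarithm yields $\nu_{k'}\le \nu_k+\overline{s}(a,d)$.

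To prove the lower bound, let $\eta$ denote the parent of $\xi_{k',i'}$ in ${\cal T}$. Since $\eta$ lies on the path from $\xi_{k,i}$ to $\xi_{k',i'}$ and $\xi_{k,i}\le \eta<\xi_{k',i'}$, the hypothesis forces $\eta\in{\bf V}({\cal T}_{k,i})$. By item 3 of Lemma \ref{razb_der}, every point of $\Omega_{{\cal T}_{k,i},F}$ lies at distance at least $c_1(a,d)\cdot 2^{-\nu_k}$ from $\Gamma$. Since $\inter F(\eta)\subset \Omega_{{\cal T}_{k,i},F}$ and ${\rm dist}_{|\cdot|}(\cdot,\Gamma)$ is continuous, this lower bound extends to the closed cube $F(\eta)$, and in particular to a point $x_0\in F(\eta)\cap F(\xi_{k',i'})$ (which exists because $F$ is consistent with the tree structure, so the intersection has dimension $d-1$).

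Now for any $x'\in F(\xi_{k',i'})$, the triangle inequality gives ${\rm dist}_{|\cdot|}(x',\Gamma)\ge {\rm dist}_{|\cdot|}(x_0,\Gamma)-|x'-x_0|$, and $|x'-x_0|\le {\rm diam}_{|\cdot|}F(\xi_{k',i'}) \underset{d}{\lesssim}2^{-m_{\xi_{k',i'}}}$. Property 2 of Theorem \ref{whitney} together with the inclusion $\Gamma\subset \partial\Omega$ yields $2^{-m_{\xi_{k',i'}}}\underset{d}{\lesssim}{\rm dist}_{|\cdot|}(F(\xi_{k',i'}),\partial \Omega)\le {\rm dist}_{|\cdot|}(F(\xi_{k',i'}),\Gamma)$. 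Writing $D={\rm dist}_{|\cdot|}(F(\xi_{k',i'}),\Gamma)$ and taking an infimum in $x'$ produces a self-referential inequality of the form $D\ge c_1 2^{-\nu_k}-c_2(d)\cdot D$, which rearranges to $D\underset{a,d}{\gtrsim}2^{-\nu_k}$, completing the plan.

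The only step requiring a moment's thought is justifying the lower bound on ${\rm dist}_{|\cdot|}(\cdot,\Gamma)$ on the shared face $F(\eta)\cap F(\xi_{k',i'})$, which lies on the boundary of $\Omega_{{\cal T}_{k,i},F}$ rather than inside it; this is immediate from continuity of the distance function. All remaining ingredients are routine combinations of Lemma \ref{razb_der}, (\ref{bfm}), and Whitney's theorem.
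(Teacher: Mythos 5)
Your proof is correct, but it takes a longer route than the paper's. The paper observes that Assertion~3 of Lemma~\ref{razb_der} applies to \emph{both} trees ${\cal T}_{k,i}$ and ${\cal T}_{k',i'}$: at a common point $x_0\in F(\eta)\cap F(\xi_{k',i'})$ (which exists because $F$ is consistent with the tree structure), it gives ${\rm dist}_{|\cdot|}(x_0,\Gamma)\underset{a,d}{\asymp}2^{-\nu_k}$ from one side and ${\rm dist}_{|\cdot|}(x_0,\Gamma)\underset{a,d}{\asymp}2^{-\nu_{k'}}$ from the other, so $2^{-\nu_k}\underset{a,d}{\asymp}2^{-\nu_{k'}}$ in two lines. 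You invoke Assertion~3 only for ${\cal T}_{k,i}$, getting a lower bound at $x_0$, and then recover the needed comparison by routing through Whitney's estimate on the cube size of $F(\xi_{k',i'})$ and a self-referential inequality $D\ge c_1 2^{-\nu_k}-c_2 D$ for $D={\rm dist}_{|\cdot|}(F(\xi_{k',i'}),\Gamma)$; this is sound (the rearranged bound $D\ge c_1(1+c_2)^{-1}2^{-\nu_k}$ holds for any positive $c_2$), but it does extra work that the symmetric application of Assertion~3 makes unnecessary. Two small points: the continuity argument for extending the lower bound from $\inter F(\eta)$ to the closed cube $F(\eta)$ is needed in the paper's proof too (and is glossed over there), so you were right to flag it; and you list (\ref{bfm}) among the ingredients, but your argument never actually uses it.
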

\begin{proof}
Let $\xi\in [\xi_{k,i}, \, \xi_{k',i'}]$ be the direct predecessor
of $\xi_{k',i'}$. Then $\xi\in {\bf V}({\cal T}_{k,i})$. By
Assertion 3 of Lemma \ref{razb_der}, ${\rm dist}_{|\cdot|}(x, \,
\Gamma) \underset{a,d}{\asymp} 2^{-\nu_k}$ for any $x\in F(\xi)$,
as well as ${\rm dist}_{|\cdot|}(x, \, \Gamma)
\underset{a,d}{\asymp} 2^{-\nu_{k'}}$ for any $x\in
F(\xi_{k',i'})$. Since the mapping $F$ is consistent with the
structure of the tree ${\cal T}$, then $F(\xi)\cap
F(\xi_{k',i'})\ne \varnothing$. Hence,
$2^{-\nu_k}\underset{a,d}{\asymp} 2^{-\nu_{k'}}$. This completes
the proof.
\end{proof}

\begin{Lem}
\label{nu_st} Let $\hat \xi\in \hat{\bf W}_{\nu_0}$. In addition,
suppose that there exists $c_0\ge 1$ such that
\begin{align}
\label{h_cond_1} \forall j\in \N, \;\; t, \; s\in [2^{-j-1}, \,
2^{-j+1}]\quad \quad c_0^{-1}\le \frac{h(t)}{h(s)}\le c_0.
\end{align}
Given $\nu\ge \nu_0$, we denote
$$
\hat{\bf W}_{\nu}(\hat\xi)=\hat{\bf W}_\nu\cap {\bf V}({\cal
T}_{\hat\xi}).
$$
Then
\begin{align}
\label{w_nu_d0} {\rm card}\, \hat{\bf
W}_{\nu}(\hat\xi)\underset{a,d,c_0}{\lesssim}
\frac{h(2^{-\nu_0})}{h(2^{-\nu})}.
\end{align}
If, in addition, $\hat\xi$ is a root of the tree ${\cal T}$, then
there is $\hat k=\hat k(a, \, d)\in \N$ such that
\begin{align}
\label{w_nu_d0_as} \sum \limits_{j=\nu}^{\nu+\hat k} {\rm card}\,
\hat{\bf W}_j(\hat\xi)\underset{a,d,c_0}{\gtrsim}
\frac{h(2^{-\nu_0})}{h(2^{-\nu})}.
\end{align}
\end{Lem}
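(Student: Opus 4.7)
The plan is to establish both inequalities by packing/covering arguments on $\Gamma$, exploiting that $\mu$ is an $h$-measure together with the geometric information already collected in (\ref{2mxi})--(\ref{diam_ot}).

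For the upper bound (\ref{w_nu_d0}), the key point is that for $\xi\in\hat{\bf W}_\nu(\hat\xi)$ the cubes $F(\xi)$ are pairwise non-overlapping with sides of length $\asymp 2^{-\nu}$ (by (\ref{mwas_kw})), and the points $z_\xi$ introduced before (\ref{zx}) satisfy ${\rm dist}_{|\cdot|}(z_\xi,\Gamma)\asymp 2^{-\nu}$. I would choose $y_\xi\in\Gamma$ realizing this distance and fix a reference point $y_*\in\Gamma$ at distance $\asymp 2^{-\nu_0}$ from $z_{\hat\xi}$. Since $F(\xi)\subset\Omega_{\hat\xi}$ and ${\rm diam}_{|\cdot|}\Omega_{\hat\xi}\asymp 2^{-\nu_0}$ by (\ref{diam_ot}), the triangle inequality gives $|y_\xi-y_*|\lesssim 2^{-\nu_0}$. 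Next, the balls $B_{c\cdot 2^{-\nu}}(y_\xi)$ have bounded overlap multiplicity: a common point would force several of the non-overlapping cubes $F(\xi)$ of side $\asymp 2^{-\nu}$ to cluster in a ball of radius $O(2^{-\nu})$, and a volume count permits only finitely many (depending only on $d$). Summing measures,
$$
|\hat{\bf W}_\nu(\hat\xi)|\cdot h(2^{-\nu})\asymp\sum_\xi\mu(B_{c\cdot 2^{-\nu}}(y_\xi))\lesssim\mu(B_{C\cdot 2^{-\nu_0}}(y_*))\asymp h(2^{-\nu_0}),
$$
where the $h$-set condition together with (\ref{h_cond_1}) absorbs the fixed constants $c$ and $C$.

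For the lower bound (\ref{w_nu_d0_as}) with $\hat\xi$ the root, I would argue in the opposite direction. Take a maximal $2^{-\nu}$-separated subset $\{y_i\}_{i=1}^N\subset\Gamma\cap B_{2^{-\nu_0}}(y_*)$; the $h$-set condition with (\ref{h_cond_1}) yields $N\gtrsim h(2^{-\nu_0})/h(2^{-\nu})$. For each $y_i$ I would construct a point $x_i\in\Omega$ with ${\rm dist}_{|\cdot|}(x_i,\partial\Omega)\asymp 2^{-\nu}\asymp|x_i-y_i|$, by applying the John curve from some interior approximation $x_i^{(0)}\in\Omega$ of $y_i$ and stopping at a parameter $t\asymp 2^{-\nu}$ (the inner-ball condition in Definition \ref{fca} supplies the lower bound on ${\rm dist}_{|\cdot|}(x_i,\partial\Omega)$). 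For the Whitney cube $F(\xi_i)\ni x_i$ one then has $2^{-m_{\xi_i}}\asymp 2^{-\nu}$; and since $\Gamma\subset\partial\Omega$,
$$
{\rm dist}_{|\cdot|}(F(\xi_i),\Gamma)\ge{\rm dist}_{|\cdot|}(F(\xi_i),\partial\Omega)\asymp 2^{-\nu},
$$
so by (\ref{2kw}) we get $k_{\xi_i}=\nu+O(1)$ with the constant depending only on $a$, $d$; in particular $\xi_i\in\hat{\bf W}$. A pigeonhole count (two distinct $2^{-\nu}$-separated points cannot share the same Whitney cube of side $\asymp 2^{-\nu}$, and in fact at most $O_d(1)$ of the $y_i$ lie within $O(2^{-\nu})$ of any given $F(\xi)$) produces $\sum_{j=\nu-C}^{\nu+C}{\rm card}\,\hat{\bf W}_j\gtrsim N$, and shifting the index range by $C$ together with (\ref{h_cond_1}) yields (\ref{w_nu_d0_as}) with $\hat k=2C$.

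The main technical obstacle is the construction of $x_i$ starting from the boundary point $y_i\in\Gamma$, since Definition \ref{fca} supplies John curves only from points of $\Omega$. One therefore first approximates $y_i$ by a point $x_i^{(0)}\in\Omega$ with $|x_i^{(0)}-y_i|$ much smaller than $2^{-\nu}$, and must verify that the curve $\gamma_{x_i^{(0)}}$ is long enough to reach parameter $t\asymp 2^{-\nu}$; this forces $\nu\ge\nu_0+O(1)$, using (\ref{diam_dist}) to control $|x_i^{(0)}-x_*|$. The finitely many remaining small values of $\nu$ are handled trivially since both sides of (\ref{w_nu_d0_as}) are then bounded by constants depending on $a$, $d$ and $c_0$.
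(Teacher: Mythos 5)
Your upper-bound argument for (\ref{w_nu_d0}) is sound and essentially equivalent to the paper's; you replace the paper's intermediate dyadic-cube count $\Theta_j(\Delta_0)$ with a direct bounded-overlap argument on the balls $B_{c\cdot 2^{-\nu}}(y_\xi)$, but both reduce to summing $\mu$ over comparable-size sets and using the $h$-measure property with (\ref{h_cond_1}) to absorb constants.

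The lower bound (\ref{w_nu_d0_as}) has a real gap. You construct a point $x_i$ on a John curve with ${\rm dist}_{|\cdot|}(x_i,\partial\Omega)\asymp 2^{-\nu}\asymp|x_i-y_i|$, let $F(\xi_i)\ni x_i$, observe $m_{\xi_i}=\nu+O(1)$ and $k_{\xi_i}=\nu+O(1)$, and conclude ``in particular $\xi_i\in\hat{\bf W}$.'' This last step does not follow. Membership in $\hat{\bf W}$ is not a scale-asymptotic statement: it is the sharp inequality $m_\xi\le k_\xi+1+\log_2 c(a,d)$ from (\ref{hatw}), where $c(a,d)$ is the specific constant of (\ref{xxiw}). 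Having both $m_{\xi_i}$ and $k_{\xi_i}$ equal to $\nu+O_{a,d}(1)$ gives $m_{\xi_i}-k_{\xi_i}=O_{a,d}(1)$, but that bound need not beat $1+\log_2 c(a,d)$ --- your $O(1)$ involves Whitney constants and the factor $1/a$ from stopping the John curve at parameter $t$, and there is no reason those constants are dominated by $c(a,d)$. The paper avoids exactly this issue: instead of producing one cube at the target scale, it picks a vertex $\xi_\Delta$ whose cube $F(\xi_\Delta)$ is \emph{arbitrarily} close to $x_\Delta\in\Gamma$ (using that Whitney cubes accumulate on $\partial\Omega\supset\Gamma$), and then sets $\eta_\Delta=\min\{\eta\in[\hat\xi,\xi_\Delta]:F(\eta)\subset K_\Delta\}$ --- the first ancestor along the path whose cube sits inside a fixed cube $K_\Delta$ of side $\asymp 2^{-l}$. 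Membership $\eta_\Delta\in\hat{\bf W}$ is then forced by the dichotomy (\ref{nw_dg}): if $\eta_\Delta\notin\hat{\bf W}$, \emph{every} point of $\Omega_{\eta_\Delta}$ (in particular all of the far-down cube $F(\xi_\Delta)$) would sit at distance $\asymp 2^{-k_{\eta_\Delta}}\asymp 2^{-l}$ from $\Gamma$, contradicting that $F(\xi_\Delta)$ is within $2^{-m}$ of $\Gamma$ for $m$ large. Your construction never invokes this dichotomy, so the claim that the constructed $\xi_i$ lands in $\hat{\bf W}$ is unsupported; without it, the pigeonhole count bounds only the number of Whitney cubes near $\Gamma$ at scale $2^{-\nu}$, not the number of elements of $\hat{\bf W}_\nu(\hat\xi)$.
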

\begin{proof}
Throughout the proof of this lemma we suppose that a cube is a
product of semi-intervals $\prod _{j=1}^d [a_j, \, b_j)$. Then any
two non-overlapping cubes do not intersect.

If $\xi\in \hat{\bf W}_{\nu}$, then it follows from
(\ref{mwas_kw}) that there is $k_*(a, \, d)$ such that $\nu-k_*(a,
\, d)\le m_\xi\le \nu+k_*(a, \, d)$. Further,
\begin{align}
\label{diam_o_xi} {\rm diam}\, \Omega_{\hat\xi} \stackrel
{(\ref{xxiw}), (\ref{mwas_kw})}{\underset{a,d}{\asymp}}
2^{-\nu_0}.
\end{align}
Therefore, if $\xi\in \hat{\bf W}_{\nu}(\hat \xi)$, then
$2^{-\nu}\underset{a,d}{\asymp} 2^{-m_\xi} \le {\rm diam}\,
\Omega_{\hat \xi}\underset{a,d}{\asymp} 2^{-\nu_0}$. Hence,
\begin{align}
\label{wn} 2^\nu\underset{a,d}{\gtrsim} 2^{\nu_0}, \quad
\text{if}\quad \hat {\bf W}_\nu(\hat \xi)\ne \varnothing.
\end{align}
Let $k\in \Z$, $-k_*(a, \, d)\le k\le k_*(a, \, d)$. Denote by
$\hat{\bf W}_{\nu,k}(\hat \xi)$ the set of $\xi\in \hat{\bf
W}_{\nu}(\hat \xi)$ such that $m_\xi=\nu+k$.

It follows from (\ref{2kw}) and (\ref{h_w_nu}) that
$$
{\rm dist}_{|\cdot|}(F(\xi), \, \Gamma)\le 2^{-\nu+1}, \quad
\xi\in \hat{\bf W}_{\nu}(\hat \xi).
$$
This together with (\ref{diam_o_xi}) implies that there exists a
cube $\Delta_0$ and a number $k_0=k_0(a, \, d)\in \Z_+$ such that
\begin{align}
\label{fxi1} F(\hat\xi)\in \Xi(\Delta_0),\quad \nu_0-k_0(a, \,
d)\le {\bf m} (\Delta_0)\le \nu_0+k_0(a, \, d),
\end{align}
\begin{align}
\label{fxi2} \Omega_{\hat \xi}\subset \Delta_0, \quad \exists
\overline{x}\in \Gamma\cap \Delta_0:\;\; {\rm
dist}_{|\cdot|}(\overline{x}, \, \partial
\Delta_0)\underset{a,d}{\gtrsim} 2^{-\nu_0},
\end{align}
\begin{align}
\label{fxi3} \forall \xi\in\hat{\bf W}_{\nu}(\hat \xi) \;\exists
x\in \Gamma \cap \Delta_0:\; {\rm dist}_{|\cdot|}(x, \, F(\xi))\le
2^{-\nu+1}.
\end{align}

Let $j\in \N$, $j\ge \nu_0+k_*(a, \, d)$. In this case, if
$\Delta\in \Xi_j\left(\left[-\frac 12, \, \frac
12\right]^d\right)$, then either $\Delta\subset \Delta_0$ or
$\Delta$ does not overlap with $\Delta_0$. It follows from the
conditions $F(\hat \xi)\in \Xi(\Delta_0)$ and $j\ge \nu_0+k_*(a,
\, d)\ge m_{\hat \xi}$. Denote by $\Delta_{0,j}$ a cube that is
obtained from $\Delta_0$ by a dilatation in respect to its center,
with a side length ${\bf m}(\Delta_0)+2\cdot 2^{-j}$. Set
$$
\Theta_j(\Delta_0)=\left\{\Delta \in \Xi_j\left(\left[-\frac 12,
\, \frac 12\right]^d\right):\; \Delta\subset \Delta_0, \; \Delta
\cap \Gamma \ne \varnothing\right\},
$$
$$
\tilde\Theta_j(\Delta_0)=\left\{\Delta \in \Xi_j\left(\left[-\frac
12, \, \frac 12\right]^d\right):\; \Delta\subset \Delta_{0,j}, \;
\Delta \cap \Gamma \ne \varnothing\right\}.
$$

Prove that
\begin{align}
\label{tn0} {\rm card}\, \Theta_j(\Delta_0) \underset {a,d,c_0}
{\asymp} \frac{h(2^{-\nu_0})}{h(2^{-j})}.
\end{align}
Let $\Delta\in \Theta_j(\Delta_0)$. Since $\Delta\cap \Gamma \ne
\varnothing$, there is a cube $K_\Delta$ centered at $z_\Delta$,
such that
\begin{align}
\label{d_in_xi} \Delta\in \Xi_1(K_\Delta),\quad {\rm dist}
_{|\cdot|}(z_\Delta, \, \Gamma)\le 2^{-{\bf m}(\Delta)-1}.
\end{align}
Then there are
\begin{align}
\label{zd_t_b} \tilde z_\Delta\in \Gamma, \;\; t_\Delta
\underset{d}{\gtrsim} 2^{-j}, \;\; \tilde t_\Delta
\underset{d}{\lesssim} 2^{-j} \;\;\text{ such that }\;\;
B_{t_\Delta} (\tilde z_\Delta) \subset K_\Delta \subset B _{\tilde
t_\Delta}(\tilde z_\Delta).
\end{align}
Let $\mu$ be a measure from the definition \ref{h_set} (in
particular, ${\rm supp}\, \mu\subset \Gamma$). Then
(\ref{h_cond_1}) and (\ref{zd_t_b}) imply that $\mu(K_\Delta)
\underset {d,c_0} {\asymp} h(2^{-j})$. On the other hand, by
(\ref{h_cond_1}), (\ref{fxi1}) and (\ref{fxi2}),
$$
h(2^{-\nu_0})\underset{a,d,c_0}{\asymp} \mu(\Delta_0) =\sum
\limits _{\Delta\in \Theta_j(\Delta_0)}\mu(\Delta),
$$
$$
h(2^{-\nu_0})\underset{a,d,c_0}{\asymp} \mu(\Delta_{0,j}) =\sum
\limits _{\Delta\in \tilde\Theta_j(\Delta_0)}\mu(\Delta).
$$
Therefore, in order to prove (\ref{tn0}) it is sufficient to check
that $\sum \limits _{\Delta\in \Theta_j(\Delta_0)}\mu(\Delta) \le
\sum \limits _{\Delta\in \Theta_j(\Delta_0)}\mu(K_\Delta)
\underset{d}{\lesssim} \sum \limits _{\Delta\in
\tilde\Theta_j(\Delta_0)}\mu(\Delta)$. The first inequality holds
since the measure $\mu$ is nonnegative. Prove the second
inequality. Since $\Delta\in \Xi_1(K_\Delta)$, we have
$K_\Delta\subset \Delta_{0,j}$. Denote
$$
\Theta_{j,\Delta}=\{\Delta '\in \tilde\Theta_j(\Delta_0):\,
\Delta'\subset K_\Delta\} \quad \mbox{for}\quad \Delta\in
\Theta_j(\Delta_0),
$$
$$
\Theta'_{j,\Delta'}=\{\Delta\in \Theta_j(\Delta_0):\;
\Delta'\subset K_\Delta\} \quad \mbox{for}\quad \Delta' \in \tilde
\Theta_j(\Delta_0).
$$
Since ${\rm card}\, \Theta'_{j,\Delta'}\underset{d}{\lesssim} 1$
for any $\Delta'\in \tilde\Theta_j(\Delta_0)$, we have
$$
\sum \limits _{\Delta\in \Theta_j(\Delta_0)}\mu(K_\Delta) =\sum
\limits _{\Delta\in \Theta_j(\Delta_0)} \sum \limits
_{\Delta'\in\Theta_{j,\Delta}} \mu(\Delta')  \le\sum \limits
_{\Delta '\in \tilde\Theta_j(\Delta_0)} \sum
\limits_{\Delta\in\Theta'_{j,\Delta'}} \mu(\Delta')
\underset{d}{\lesssim} \sum \limits _{\Delta '\in
\tilde\Theta_j(\Delta_0)} \mu(\Delta').
$$
This proves (\ref{tn0}).

Show that if $-k_*(a, \, d)\le k\le k_*(a, \, d)$, $\nu\ge
\nu_0+2k_*(a, \, d)$, then
\begin{align}
\label{cw_n_k} {\rm card}\, \hat{\bf W}_{\nu,k}(\hat \xi)
\underset{a,d} {\lesssim} {\rm card}\,\Theta_{\nu+k}(\Delta_0)
\end{align}
(recall that $\Theta_j(\Delta_0)$ was defined for $j\ge \nu_0
+k_*(a, \, d)$).

Set
$$
A:=\left\{\Delta'\in \Xi_{\nu+k}\left(\left[-\frac 12, \, \frac
12\right]^d\right):\;\exists \Delta\in \Theta_{\nu+k}(\Delta_0):\;
{\rm dist}_{|\cdot|}(\Delta', \, \Delta)\le 2^{-\nu+1}\right\}.
$$
From (\ref{fxi3}) it follows that $\{F(\xi):\, \xi\in\hat{\bf
W}_{\nu,k}(\hat \xi)\}\subset A$ and ${\rm card}\,\hat{\bf
W}_{\nu,k}(\hat \xi)\le {\rm card}\, A \underset{a,d}{\lesssim}
{\rm card}\,\Theta_{\nu+k}(\Delta_0)$.

If $\nu\ge \nu_0+2k_*(a, \, d)$, then (\ref{h_cond_1}),
(\ref{tn0}) and (\ref{cw_n_k}) imply (\ref{w_nu_d0}). If $\hat
{\bf W}_\nu(\hat\xi)\ne \varnothing$ and $\nu<\nu_0+2k_*(a, \,
d)$, then by (\ref{wn}) we get $2^{-\nu}\underset{a,d}{\asymp}
2^{-\nu_0}$; hence, $\frac{h(2^{-\nu_0})}{h(2^{-\nu})}
\stackrel{(\ref{h_cond_1})}{\underset{a,d,c_0}{\asymp}} 1$. This
together with (\ref{mwas_kw}), (\ref{h_w_nu}), (\ref{fxi1}) and
(\ref{fxi2}) yield (\ref{w_nu_d0}).

Let us prove (\ref{w_nu_d0_as}). By (\ref{tn0}), it is sufficient
to check
\begin{align}
\label{sljn} \sum \limits_{j=\nu}^{\nu+\hat k} {\rm card}\,
\hat{\bf W}_j(\hat\xi)\underset{a,d,c_0}{\gtrsim} {\rm card}\,
\Theta_\nu(\Delta_0).
\end{align}
Let $\Delta\in \Theta_l(\Delta_0)$, and let $K_\Delta$ be the cube
defined above (see (\ref{d_in_xi})),
\begin{align}
\label{xd_kd} x_\Delta\in K_\Delta\cap \Gamma, \quad
|x_\Delta-z_\Delta|\le 2^{-{\bf m}(\Delta)-1}.
\end{align}
Since $\Gamma \subset \partial \Omega$, by Theorem \ref{whitney}
and Lemma \ref{cor_omega_t} for any $m\in \N$ there is a vertex
$\xi_{\Delta}\in {\bf V}({\cal T})$ such that
\begin{align}
\label{dxd} {\rm dist}\, (x_\Delta, \, F(\xi_{\Delta}))< 2^{-m},
\quad 2^{-m_{\xi_\Delta}} \underset{d}{\lesssim} 2^{-m}.
\end{align}
If $m$ is sufficiently large, then it follows from (\ref{xd_kd})
and (\ref{dxd}) that $F(\xi_\Delta)\subset K_\Delta$. Denote
$\eta_\Delta=\min\{\eta\in [\hat \xi, \, \xi_\Delta]:\;
F(\eta)\subset K_\Delta\}$. Show that
\begin{align}
\label{2medel} 2^{-m_{\eta_\Delta}}\underset{a,d}{\asymp} 2^{-l}
\quad\text{and}\quad \eta_\Delta\in \hat{\bf W}\quad\text{for
sufficiently large }m.
\end{align}
Indeed, since $\Delta\in \Theta_l(\Delta_0)$, we have $2^{-{\bf
m}(\Delta)}=2^{-l}$. The inclusion $F(\eta_\Delta)\subset
K_\Delta$ and the first relation in (\ref{d_in_xi}) imply that
$2^{-m_{\eta_\Delta}}\lesssim 2^{-{\bf m}(\Delta)}$. Check that
$2^{-m_{\eta_\Delta}}\underset{a,d}{\gtrsim} 2^{-{\bf
m}(\Delta)}$. It follows from the definition of $\eta_\Delta$ that
$\partial F(\eta_\Delta)\cap \partial K_\Delta\ne \varnothing$.
Let $\hat x\in \partial F(\eta_\Delta)\cap
\partial K_\Delta$. Then $|\hat
x-z_\Delta|\stackrel{(\ref{d_in_xi})}{=}2^{-{\bf m}(\Delta)}$. By
(\ref{dxd}), there is a point $\hat y\in F(\xi_\Delta)$ such that
$|x_\Delta-\hat y|\le 2^{-m}$. Since $F(\xi_\Delta) \subset
\Omega_{\eta_\Delta}$, we have
$$
2^{-m_{\eta_\Delta}}\stackrel{(\ref{xxiw})}{\underset{a,d}{\gtrsim}}{\rm
diam}_{|\cdot|} \Omega_{\eta_\Delta} \ge |\hat x-\hat y|\ge
$$
$$
\ge|\hat x-z_\Delta|-|z_\Delta-x_\Delta| -|x_\Delta-\hat y|
\stackrel{(\ref{xd_kd})} {\ge} 2^{-{\bf m}(\Delta)-1}-2^{-m}\ge
2^{-{\bf m}(\Delta)-2}
$$
for large $m$. The first relation in (\ref{2medel}) is proved.
Check the second relation. Let $\eta_\Delta\notin \hat{\bf W}$.
Then, by (\ref{nw_dg}), for any $x\in \Omega_{\eta_\Delta}$ we
have ${\rm dist}_{|\cdot|}(x, \, \Gamma)\underset{a,d}{\asymp}
2^{-k_{\eta_\Delta}}$. Taking $x\in F(\xi_\Delta)\subset
\Omega_{\eta_\Delta}$, we get
$$
2^{-m_{\eta_\Delta}} \stackrel{(\ref{2mxi})}{\underset {a,d}
{\lesssim}} 2^{-k_{\eta_\Delta}}\underset{a,d}{\asymp} {\rm
dist}_{|\cdot|}(x, \, \Gamma)\le |x-x_\Delta|
\stackrel{(\ref{dxd})}{\underset{d}{\lesssim}} 2^{-m}.
$$
It follows from the proved first relation in (\ref{2medel}) that
$2^{-l}\underset{a,d}{\lesssim} 2^{-m}$. It is impossible for
large $m$.

It follows from (\ref{mwas_kw}), (\ref{h_w_nu}) and (\ref{2medel})
that $\eta_\Delta\in \hat{\bf W}_j$ for some $j\in \Z_+$ such that
$2^{-j}\underset{a,d}{\asymp} 2^{-l}$. Therefore,
\begin{align}
\label{llls} l-l_*\le j\le l+l_*, \quad \mbox{with}\quad
l_*=l_*(a, \, d)\in \N.
\end{align}
Set $\hat k=2l_*$. In order to prove (\ref{w_nu_d0_as}), we take
$j=\nu+l_*$ and apply (\ref{llls}). We get
$${\rm card}\, \{\eta_\Delta:\; \Delta\in \Theta_{\nu+l_*}(\Delta_0)\} \le \sum \limits
_{l=\nu}^{\nu+\hat k} {\rm card}\, \hat{\bf W}_l.$$ Hence, in
order to prove (\ref{sljn}) it is sufficient to check
\begin{align}
\label{ceta} {\rm card}\, \Theta _j(\Delta_0)\underset{d}
{\lesssim} {\rm card}\, \{\eta_\Delta:\; \Delta\in
\Theta_j(\Delta_0)\}
\end{align}
and to apply (\ref{tn0}) with (\ref{h_cond_1}). Let $\Delta$,
$\Delta'\in \Theta_j(\Delta_0)$, $\eta_\Delta=\eta_{\Delta'}$.
Then $K_\Delta\cap K_{\Delta'}\supset\Delta''$, $\Delta''\in
\Xi_1(K_\Delta)$ and $\Delta''\in \Xi_1(K_{\Delta'})$. Therefore,
${\rm card}\, \{\eta_{\Delta'}:\, \Delta'\in \Theta_j(\Delta_0),
\; \eta_{\Delta'}=\eta_\Delta\}\underset{d}{\lesssim}1$, which
implies (\ref{ceta}). This completes the proof.
\end{proof}
\vskip 0.5cm

Let $m\in \N$. For $0<t_0<t_1\le \infty$ denote by ${\cal G}^{t_0,
\, t_1}$ the maximal subgraph in ${\cal T}$ on the set of vertices
$$
{\bf V}({\cal G}^{t_0, \, t_1}):=\bigcup _{t_0\le
\nu_k<t_1}\bigcup _{i\in I_k} {\cal T}_{k,i}
$$
(the index set $I_k$ was defined in Lemma \ref{razb_der});
\label{aij}by $\{{\cal D}_{j,i}\}_{i\in \tilde I_j}$ we denote the
set of all connected components of the graph ${\cal G}^{1+mj, \,
1+m(j+1)}$; by $\hat \xi_{j,i}=\hat \xi_{j,i}^m$ denote the
minimal vertex of the tree ${\cal D}_{j,i}$, $j\in \Z_+$. Then
\begin{enumerate}
\item $\hat \xi_{j,i}\in \hat{\bf W}_{\nu_k}$ for some
$\nu_k\in [1+mj, \, 1+m(j+1))$; in particular,
\begin{align}
\label{diam_dj} {\rm diam}\, \Omega_{{\cal
D}_{j,i},F}\stackrel{(\ref{diam_ot})}{\underset{a,d,m}{\asymp}}
2^{-mj};
\end{align}
\item for any $x\in \Omega _{{\cal D}_{j,i},F}$
\begin{align}
\label{dist_x1} {\rm dist}_{|\cdot|}(x, \, \Gamma)
\underset{a,d,m}{\asymp} 2^{-mj}
\end{align}
(it follows from Assertion 3 of Lemma \ref{razb_der});
\item if $\hat \xi_{j,i}<\hat \xi_{j',i'}$, then $j<j'$
(indeed, $\hat \xi_{j,i}=\xi_{k,t}$ and $\hat
\xi_{j',i'}=\xi_{k',t'}$ for some $k$, $t$, $k'$, $t'$; by
Assertions 4 and 1 of Lemma \ref{razb_der}, $\nu_k<\nu_{k'}$; it
implies that $j\le j'$; the equality $j=j'$ is impossible; indeed,
in this case the vertices $\hat \xi_{j,i}$ and $\hat \xi_{j',i'}$
are incomparable).
\end{enumerate}
Let $\hat \xi_{j,i}<\hat \xi_{j',i'}$, $$\{\xi: \; \hat
\xi_{j,i}\le \xi<\hat \xi_{j',i'}\}\subset {\cal D}_{j,i}.$$ Then
we say that the tree ${\cal D}_{j',i'}$ follows the tree ${\cal
D}_{j,i}$.
\begin{Rem}
\label{jjj} Let $\overline{s}$ be such as in Proposition
\ref{dif_nu}, let $m\ge \overline{s}$, and let ${\cal D}_{j',i'}$
follow the tree ${\cal D}_{j,i}$. Then $j'=j+1$.
\end{Rem}
Indeed, let $\xi_{t,s}\in {\cal D}_{j,i}$, $\{\xi:\,
\xi_{t,s}<\xi<\hat \xi_{j',i'}\}\subset {\bf V}({\cal T}_{t,s})$,
$\hat\xi_{j',i'}=\xi_{t',s'}$. By Proposition \ref{dif_nu},
$1+mj'\le \nu_{t'}\le \nu_t+\overline{s}< 1+m(j+1)+\overline{s}$.
Hence, $m(j'-j-1)< \overline{s}$. Since $m\ge\overline{s}$, the
last inequality is possible only for $j'=j+1$.

Given $j\in \Z_+$, $l\in \N$, $t\in \tilde I_j$, we denote
\begin{align}
\label{ijtl} \tilde I_{j,t}^l=\tilde I_{j,t}^{l,m}=\{i\in \tilde
I_{j+l}:\, \hat \xi^m_{j+l,i}>\hat \xi^m_{j,t}\}.
\end{align}

\begin{Lem}
\label{litt} Let $m\in \N$ be divisible by $\overline{s}$. Suppose
that (\ref{h_cond_1}) holds for some $c_0\ge 1$. Then
\begin{align}
\label{ijitl} {\rm card} \tilde I^l_{j,t} \underset {a,d,c_0}
{\lesssim} \frac{h(2^{-mj})}{h(2^{-m(j+l)})}.
\end{align}
\end{Lem}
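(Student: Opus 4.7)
My plan is to reduce the counting of $\tilde I^l_{j,t}$ to a count of vertices in the sets $\hat{\bf W}_\nu$ inside the subtree rooted at $\hat\xi^m_{j,t}$, and then invoke Lemma \ref{nu_st}; most of the geometric content is already contained in that lemma.

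First, I will observe that every minimal vertex $\hat\xi_{j+l,i}^m$ lies in $\hat{\bf W}_\nu$ for some $\nu\in[1+m(j+l),\,1+m(j+l+1))$. Indeed, by construction $\hat\xi_{j+l,i}^m$ coincides with the minimal vertex $\xi_{k,i'}$ of some ${\cal T}_{k,i'}$ with $\nu_k$ in this range, and Assertion 2 of Lemma \ref{razb_der} gives $\xi_{k,i'}\in\hat{\bf W}_{\nu_k}$. Different indices $i$ give distinct vertices $\hat\xi_{j+l,i}^m$ (the components ${\cal D}_{j+l,i}$ are pairwise disjoint), and the condition $i\in\tilde I^l_{j,t}$ places each such vertex in ${\bf V}({\cal T}_{\hat\xi_{j,t}^m})$. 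Consequently
$$
{\rm card}\,\tilde I^l_{j,t}\;\le\;\sum_{\nu=1+m(j+l)}^{m(j+l+1)}{\rm card}\,\hat{\bf W}_\nu(\hat\xi_{j,t}^m).
$$

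Next, I will pick $\nu_0\in[1+mj,\,1+m(j+1))$ with $\hat\xi_{j,t}^m\in\hat{\bf W}_{\nu_0}$ and apply estimate (\ref{w_nu_d0}) of Lemma \ref{nu_st} with root $\hat\xi_{j,t}^m$. This bounds each summand above by a constant depending on $a,d,c_0$ times $h(2^{-\nu_0})/h(2^{-\nu})$.

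Finally, since $|\nu_0-mj|<m$ and $|\nu-m(j+l)|<m$ throughout the summation range, I will apply assumption (\ref{h_cond_1}), iterated finitely many times, to replace $h(2^{-\nu_0})$ by $h(2^{-mj})$ and each $h(2^{-\nu})$ by $h(2^{-m(j+l)})$, up to multiplicative constants. Absorbing these constants together with the factor from summing at most $m$ terms into the overall multiplicative constant yields (\ref{ijitl}). The argument is essentially combinatorial once Lemma \ref{nu_st} is available; the only step requiring any care is the iterated use of (\ref{h_cond_1}) to control $h$ across a bounded dyadic range, and this is routine, so I do not expect any real obstacle.
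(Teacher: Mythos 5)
Your reduction to Lemma \ref{nu_st} is the right starting move, and the inequality
$$
{\rm card}\,\tilde I^l_{j,t}\;\le\;\sum_{\nu}{\rm card}\,\hat{\bf W}_\nu(\hat\xi_{j,t}^m)
$$
is correct, but the final step hides a genuine gap: the constants you ``absorb'' are not bounded independently of $m$. The implied constant in (\ref{ijitl}) must depend only on $a,d,c_0$ (this is essential downstream, where it enters $C_*$ in (\ref{vjj0}) and hence $\sigma_2(p,C_*)$ in Lemma \ref{hata}, which must stay bounded away from $0$ as $m\to\infty$). In your argument you sum over all $\nu\in[1+m(j+l),\,m(j+l+1)]$, which is $m$ terms, and you iterate (\ref{h_cond_1}) across roughly $m$ dyadic scales to replace $h(2^{-\nu})$ by $h(2^{-m(j+l)})$. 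Each of these introduces $m$-dependence: the sum bound contributes a factor of order $m$, and the iterated (\ref{h_cond_1}) contributes a factor of order $c_0^{m}$ (concretely, when $\theta>0$, the worst-case ratio $h(2^{-m(j+l)})/h(2^{-m(j+l+1)})$ is about $2^{m\theta}$). So what you actually prove is ${\rm card}\,\tilde I^l_{j,t}\lesssim_{a,d,c_0,m}h(2^{-mj})/h(2^{-m(j+l)})$, which is strictly weaker than the claim.

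The paper's proof circumvents this by first establishing the case $m=\overline{s}$, where the summation window $[1+\overline{s}(j+l),\,\overline{s}(j+l+1)]$ has the fixed length $\overline{s}=\overline{s}(a,d)$, and then reducing the general case $m=m'\overline{s}$ to the case $m=\overline{s}$ via the identification $\hat\xi^m_{j,t}=\hat\xi^{\overline{s}}_{j',t'}$, $\hat\xi^m_{j+l,i}=\hat\xi^{\overline{s}}_{m'(j+l),i'}$ (Remark \ref{jjj}), so that the constant from the $\overline{s}$-case carries over unchanged. Your proof could be repaired without this reduction if you sharpen the localization of the minimal vertices: using Proposition \ref{dif_nu} on the direct predecessor of $\hat\xi^m_{j+l,i}$ (which lies in a block with $\nu$-index $<1+m(j+l)$) shows that in fact $\hat\xi^m_{j+l,i}\in\hat{\bf W}_\nu$ only for $\nu\in[1+m(j+l),\,m(j+l)+\overline{s}]$, a window of fixed length $\overline{s}(a,d)$ rather than $m$. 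With that refinement your summation has $\overline{s}$ terms and (\ref{h_cond_1}) is iterated $\overline{s}$ times, which gives a constant depending only on $a,d,c_0$ as required. As written, however, the proof does not establish the stated uniformity in $m$.
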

\begin{proof}
First consider the case $m=\overline{s}$.

By the property 1 of the trees ${\cal D}_{j,t}$ and ${\cal
D}_{j+l,i}$,
$$
\hat \xi _{j,t}\in
\bigcup_{\nu'=1+\overline{s}j}^{\overline{s}(j+1)} \hat{\bf
W}_{\nu'}, \quad \hat \xi_{j+l,i}\in
\bigcup_{\nu=1+\overline{s}(j+l)} ^{\overline{s}(j+l+1)} \hat{\bf
W}_{\nu}(\hat \xi_{j,t})
$$
(recall that $\hat{\bf W}_{\nu}(\hat \xi_{j,t})=\hat{\bf W}_{\nu}
\cap {\bf V}({\cal T}_{\hat \xi_{j,t}})$). Therefore, from Lemma
\ref{nu_st} and (\ref{h_cond_1}) it follows that
$$
{\rm card}\, \tilde I^l_{j,t} \le \sum \limits
_{\nu=1+\overline{s}(j+l)}^{\overline{s}(j+l+1)} {\rm card}\,
\hat{\bf W}_{\nu}(\hat \xi_{j,t}) \underset{a,d,c_0}{\lesssim}
\frac{h(2^{-\overline{s}j})}{h(2^{-\overline{s}(j+l)})}.
$$

Consider the case $m=m'\overline{s}$. Then $\hat \xi^m_{j,t}=\hat
\xi^{\overline{s}}_{j',t'}$ for some $j'\ge m'j$, $\hat
\xi^m_{j+l,i}= \hat\xi ^{\overline{s}}_{m'(j+l),i'}$ (by Remark
\ref{jjj}). Hence,
$$
{\rm card}\, \tilde I^{l,m}_{j,t}={\rm card}\, \tilde
I^{m'l,{\overline{s}}}_{m'j,t'} \underset{a,d,c_0}{\lesssim}
\frac{h(2^{-\overline{s}m'j})}{h(2^{-\overline{s}m'(j+l)})}=
\frac{h(2^{-mj})}{h(2^{-m(j+l)})}.
$$
This completes the proof.
\end{proof}

\section{The discrete Hardy-type inequality on a tree: case $p=q$}
\subsection{The analogue of Evans -- Harris -- Pick theorem}
Let $({\cal A}, \, \xi_0)$ be a tree with a finite vertex set, let
$1\le p\le \infty$, and let $u$, $w:{\bf V}({\cal A})\rightarrow
\R_+$ be weight functions. Denote by $\mathfrak{S}_{{\cal A},u,w}$
the minimal constant $C$ in the inequality
\begin{align}
\label{dh} \left(\sum \limits_{\xi \in {\bf V}({\cal A})} w^p(\xi)
\left( \sum \limits _{\xi'\le
\xi}u(\xi')f(\xi')\right)^p\right)^{1/p} \le C\left(\sum
\limits_{\xi \in {\bf V}({\cal A})} f^p(\xi)\right)^{1/p}, \;\;
f:{\bf V}({\cal A})\rightarrow \R_+.
\end{align}

\begin{Rem}
\label{subtr} If ${\cal D}\subset {\cal A}$ is a subtree, then
$\mathfrak{S}_{{\cal D},u,w}\le \mathfrak{S}_{{\cal A},u,w}$.
\end{Rem}

Let us obtain two-sided estimates for $\mathfrak{S}_{{\cal
A},u,w}$. We reduce this problem to estimating the constant in the
Hardy-type inequality on a metric tree and use the result from the
article \cite{ev_har_pick}.

Let $\hat\xi\in {\bf V}({\cal A})$, ${\cal D}\subset {\cal
A}_{\hat \xi}$. We say that ${\cal D}\in {\cal J}'_{\hat \xi}$ if
the following conditions hold:
\begin{enumerate}
\item $\hat \xi$ is the minimal vertex in ${\cal D}$,
\item if $\xi\in {\bf V}({\cal D})$ is not a maximal vertex
${\cal D}$, then ${\bf V}_1(\xi)\subset {\bf V}({\cal D})$.
\end{enumerate}

Denote by $\mathaccent'27 {\cal D}$ the subtree in ${\cal D}$ such
that ${\bf V}(\mathaccent'27 {\cal D})={\bf V}({\cal D})\backslash
{\bf V}_{\max}({\cal D})$.

For any subgraph ${\cal G}\subset {\cal A}$ and for any function
$f:{\bf V}({\cal G})\rightarrow \R$, we denote
\begin{align}
\label{flpg} \|f\|_{l_p({\cal G})}=\left ( \sum \limits _{\omega
\in {\bf V} ({\cal G})}|f(\omega)|^p\right )^{1/p}.
\end{align}
By $l_p({\cal G})$ we denote the space of functions $f:{\bf
V}({\cal G})\rightarrow \R$ equipped with the norm
$\|f\|_{l_p({\cal G})}$.

For ${\cal D}\in {\cal J}'_{\hat \xi}$ we set
\begin{align}
\label{betad} \beta_{{\cal D}}=\inf \left\{ \| f\| _{l_p({\cal
A})}: \, \sum \limits _{\hat \xi\le \xi'\le \xi} f(\xi')
u(\xi')=1, \;\; \forall \xi \in {\bf V}_{\max}({\cal D})\right\}.
\end{align}
Notice that if ${\cal D}=\{\hat \xi\}$, then
\begin{align}
\label{odno_toc} \beta_{\{\hat \xi\}}=\inf \{|f(\hat \xi)|:\;
f(\hat \xi)u(\hat \xi)=1\}=u^{-1}(\hat \xi).
\end{align}

\begin{Lem}
\label{hardy_cr} Suppose that there exists $\hat C\ge 1$ such that
for any $\xi\in {\bf V}({\cal A})$
\begin{align}
\label{ccc} {\rm card}\, {\bf V}_1(\xi)\le \hat C,
\end{align}
and let for any adjacent vertices $\xi$, $\xi'\in {\bf V}({\cal
A})$
\begin{align}
\label{uuu} \hat C^{-1}\le \frac{u(\xi)}{u(\xi')}\le \hat C, \;\;
\hat C^{-1}\le \frac{w(\xi)}{w(\xi')}\le \hat C.
\end{align}
Then
$$
\mathfrak{S}_{{\cal A}_{\hat \xi},u,w}\underset{p,\hat C}{\asymp}
\sup _{{\cal D}\in {\cal J}'_{\hat \xi}} \frac{\|w\chi_{{\cal
A}_{\hat \xi}\backslash \mathaccent'27{\cal D}}\|_{l_p({\cal
A}_{\hat \xi})}} {\beta _{{\cal D}}}.
$$
\end{Lem}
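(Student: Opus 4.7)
The plan is to reduce the discrete Hardy-type inequality on ${\cal A}_{\hat\xi}$ to the continuous Hardy inequality on a suitable metric tree, apply Theorem \ref{cr_har}, and translate the resulting criterion back to the discrete setting.

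To set up the reduction, I would construct a metric tree $\mathbb{T}$ whose combinatorial skeleton is ${\cal A}_{\hat\xi}$ augmented by one extra edge of length $1$ whose lower endpoint $x_0$ serves as the root and whose upper endpoint is $\hat\xi$; every other edge is also given length $1$. For each $\xi\in{\bf V}({\cal A}_{\hat\xi})$ let $e_\xi$ denote the edge with upper endpoint $\xi$ and define the step functions $\tilde u\equiv u(\xi)$, $\tilde w\equiv w(\xi)$, and (for nonnegative $f$) $\tilde f\equiv f(\xi)$ on $e_\xi$. Then $\|\tilde f\|_{L_p(\mathbb{T})}=\|f\|_{l_p({\cal A}_{\hat\xi})}$, and the Hardy integral $\int_{x_0}^x\tilde u\tilde f\,dt$ evaluated at the upper endpoint of $e_\xi$ equals the discrete sum $\sum_{\xi'\le\xi}u(\xi')f(\xi')$. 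Using the elementary bound $\int_0^1(A+sB)^p\,ds\underset{p}{\asymp}(A+B)^p$ for $A,B\ge 0$, and applying Jensen's inequality to the averages $\bar\varphi(\xi):=\int_{e_\xi}\varphi\,dt$ for the reverse direction, one shows that the operator norm of $I_{\tilde u,\tilde w,x_0}$ on $L_p(\mathbb{T}_{x_0})$ is equivalent to $\mathfrak{S}_{{\cal A}_{\hat\xi},u,w}$ up to $p$-dependent constants. Theorem \ref{cr_har} with $q=p$ then yields
$$
\mathfrak{S}_{{\cal A}_{\hat\xi},u,w}\underset{p}{\asymp}\sup_{\mathbb{D}\in {\cal J}_{x_0}}\frac{\|\tilde w\chi_{\mathbb{T}_{x_0}\setminus \mathbb{D}}\|_{L_p(\mathbb{T})}}{\alpha_{\mathbb{D}}}.
$$

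To rewrite this as the claimed discrete supremum, for each ${\cal D}\in {\cal J}'_{\hat\xi}$ I would take the ``midpoint'' metric subtree $\mathbb{D}^{\cal D}\in {\cal J}_{x_0}$ consisting of the full edges $e_\xi$ for $\xi\in{\bf V}(\mathaccent'27{\cal D})$ together with the lower halves of $e_\xi$ for $\xi\in{\bf V}_{\max}({\cal D})$. Then $\|\tilde w\chi_{\mathbb{T}_{x_0}\setminus\mathbb{D}^{\cal D}}\|_{L_p(\mathbb{T})}\underset{p}{\asymp}\|w\chi_{{\cal A}_{\hat\xi}\setminus\mathaccent'27{\cal D}}\|_{l_p({\cal A}_{\hat\xi})}$ by direct computation. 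The equivalence $\alpha_{\mathbb{D}^{\cal D}}\underset{p}{\asymp}\beta_{\cal D}$ follows by lifting a minimizer $f$ for $\beta_{\cal D}$ to a step function that takes the value $f(\xi')$ on $e_{\xi'}$ for $\xi'\in{\bf V}(\mathaccent'27{\cal D})$, the value $2f(\xi)$ on the lower half of $e_\xi$ for $\xi\in{\bf V}_{\max}({\cal D})$, and $0$ elsewhere (this lift satisfies the continuous constraints at the midpoints exactly), and conversely by averaging a minimizer $g$ for $\alpha_{\mathbb{D}^{\cal D}}$ to $\bar g(\xi):=\int_{e_\xi\cap \mathbb{D}^{\cal D}}g\,dt$, which satisfies the discrete constraints exactly with $\|\bar g\|_{l_p}\le\|g\|_{L_p(\mathbb{T})}$ by Jensen's inequality. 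This bounds the metric supremum from below by the discrete one.

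The main obstacle is the reverse bound: given an arbitrary $\mathbb{D}\in{\cal J}_{x_0}$ with maximal points $t_\xi\in(0,1]$ on the edges $e_\xi$, one must show that the associated discrete subtree ${\cal D}:=\{\xi:\,e_\xi\cap\mathbb{D}\ne\varnothing\}$ achieves a ratio comparable to that of $\mathbb{D}$. The numerator comparison $\|\tilde w\chi_{\mathbb{T}_{x_0}\setminus\mathbb{D}^{\cal D}}\|_{L_p}\underset{p}{\gtrsim}\|\tilde w\chi_{\mathbb{T}_{x_0}\setminus\mathbb{D}}\|_{L_p}$ is elementary (the coefficient of $w(\xi)^p$ passes from $1-t_\xi\le 1$ to $1/2$), but the denominator comparison $\alpha_{\mathbb{D}^{\cal D}}\underset{p,\hat C}{\lesssim}\alpha_{\mathbb{D}}$ is subtle because the maximal points $t_\xi$ can lie arbitrarily close to an endpoint of their edge. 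Here the branching hypothesis (\ref{ccc}) and the weight-regularity hypothesis (\ref{uuu}) are essential: using Theorem \ref{rekurs} to express both $\alpha$-quantities recursively and comparing the two recursions level-by-level, one verifies that shifting each maximal point to the midpoint changes the local contribution to $1/\alpha$ by at most a factor depending on $p$ and $\hat C$, and these local distortions do not compound along the tree since the recursion combines sub-values via homogeneous $l_p$ and $l_{p'}$ norms. Combining the two directions produces the claimed two-sided estimate.
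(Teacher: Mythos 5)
Your overall reduction (augmenting the tree by a root edge, encoding discrete data as step functions, applying Theorem \ref{cr_har}, and then translating back) is exactly the paper's approach. Your lower-bound step is in fact slightly cleaner than the paper's: by attaching the constraint to the midpoint of each terminal edge (your $\mathbb{D}^{\cal D}$), you get $\alpha_{\mathbb{D}^{\cal D}}\underset{p}{\asymp}\beta_{\cal D}$ and $\|\tilde w\chi_{\mathbb{T}_{x_0}\setminus\mathbb{D}^{\cal D}}\|_{L_p}\underset{p}{\asymp}\|w\chi_{{\cal A}_{\hat\xi}\setminus\mathaccent'27{\cal D}}\|_{l_p}$ for every ${\cal D}\in{\cal J}'_{\hat\xi}$ in one stroke, whereas the paper first restricts to ${\cal D}$ whose maximal vertices avoid ${\bf V}_{\max}({\cal A})$, takes $\mathbb{D}=\mathbb{D}^+$, invokes (\ref{ccc}) and (\ref{uuu}) to replace $\|w\chi_{{\cal A}\setminus{\cal D}}\|$ by $\|w\chi_{{\cal A}\setminus\mathaccent'27{\cal D}}\|$, and then argues the restricted supremum is comparable to the full one.

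The upper bound (your ``reverse bound'') is where the argument breaks. You want, for an arbitrary $\mathbb{D}\in{\cal J}_{x_0}$ with terminal points $t_\xi\in(0,1]$, the comparison $\|\tilde w\chi_{\mathbb{T}_{x_0}\setminus\mathbb{D}^{\cal D}}\|_{L_p}\gtrsim\|\tilde w\chi_{\mathbb{T}_{x_0}\setminus\mathbb{D}}\|_{L_p}$. But on a maximal edge $e_\xi$, the coefficient of $w(\xi)^p$ in the left side is $1/2$ and in the right side is $1-t_\xi$; when $t_\xi<1/2$ the inequality reverses, so this step is simply false. The ``subtle denominator comparison'' you flag also does not actually require (\ref{ccc}), (\ref{uuu}), nor Theorem \ref{rekurs}: both $\mathbb{D}$ and $\mathbb{D}^{\cal D}$ are contained in $\mathbb{D}^+$, so by the monotonicity of $\alpha$ under set inclusion one has $\alpha_{\mathbb{D}}\ge\alpha_{\mathbb{D}^+}$, and your own lifting argument shows $\alpha_{\mathbb{D}^{\cal D}}\le 2^{1-1/p}\alpha_{\mathbb{D}^+}$, which already gives $\alpha_{\mathbb{D}^{\cal D}}\lesssim\alpha_{\mathbb{D}}$ without any branching or weight-regularity hypotheses. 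More importantly, the detour through $\mathbb{D}^{\cal D}$ is unnecessary here: the paper compares $\mathbb{D}$ directly to the discrete object. One has $\|\tilde w\chi_{\mathbb{T}_{x_0}\setminus\mathbb{D}}\|_{L_p}\le\|w\chi_{{\cal A}_{\hat\xi}\setminus\mathaccent'27{\cal D}}\|_{l_p}$ trivially (coefficients $1-t_\xi\le 1$), and $\alpha_{\mathbb{D}}\ge\beta_{\cal D}$ follows by taking any admissible $\varphi$ for $\alpha_{\mathbb{D}}$, averaging it on each edge (on a maximal edge $e_\xi$, replacing it by the constant $\int_0^{t_\xi}|\varphi|$), and checking via H\"{o}lder that the $l_p$-norm does not increase while the constraints at the full endpoints of $\partial\mathbb{D}^+$ are satisfied exactly. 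That eliminates the need for any regularity of the weights in the upper direction, which is where the paper actually places it. So the gap is genuine: the numerator inequality fails, and the proposed remedy both misidentifies which hypotheses are relevant and is not worked out; replacing the $\mathbb{D}^{\cal D}$ detour by the direct comparison $\mathbb{D}\leftrightarrow({\cal D},\mathaccent'27{\cal D})$ fixes the argument.
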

\begin{proof}
If ${\bf V}({\cal A})=\{\xi_0\}$, then the assertion is trivial.

Let ${\bf V}({\cal A})\ne\{\xi_0\}$. Add to the set ${\bf V}({\cal
A})$ a vertex $\xi_*$ and join it with $\xi_0$ by an edge. Thus we
obtain the tree $(\tilde{\cal A}, \, \xi_*)$. Define the mapping
$\Delta$ by $\Delta(\lambda)=[0, \, 1]$, $\lambda \in {\bf
E}(\tilde{\cal A})$. Thus we get the metric tree
${\mathbb{A}}=(\tilde{\cal A}, \, \Delta)$. For any function
$\psi:{\bf V}({\cal A})\rightarrow \R$ we define $\psi^{\#}
:{\mathbb{A}}\rightarrow \R$ as follows. Let $e=(\xi', \, \xi)\in
{\bf E}({\cal A})$, $\xi>\xi'$. Then we set
$\psi^{\#}|_{\Delta(e)}=\psi(\xi)$.

Let $\lambda_{\hat\xi}\in {\bf E}({\cal A})$ be an edge with the
end $\hat \xi$, $x_0=(0, \, \lambda_{\hat \xi})\in\mathbb{A}$. By
H\"{o}lder inequality,
$$
\|I_{u^{\#},w^{\#},x_0}\| _{L_p(\mathbb{A}_{x_0})\rightarrow
L_p(\mathbb{A}_{x_0})} \asymp \mathfrak{S}_{{\cal A}_{\hat
\xi},u,w}.
$$
It follows from Theorem \ref{cr_har} that
$$
\mathfrak{S}_{{\cal A}_{\hat \xi},u,w}\asymp \sup _{\mathbb{D} \in
{\cal J}_{x_0}} \frac{\|w^{\#}\chi_{\mathbb{A}_{x_0} \backslash
\mathbb{D}}\|_{L_p(\mathbb{A}_{x_0})}}{\alpha_{\mathbb{D}}},
$$
with
$$
\alpha_{\mathbb{D}}=\inf \left\{\|\phi\|_{L_p(\mathbb{A}_{x_0})}:
\, \int \limits _{x_0}^t |\phi(x)|u^{\#}(x)\, dx=1\quad \forall
t\in \partial \mathbb{D}\right\}.
$$
Applying the H\"{o}lder inequality once again (see also
\cite{ev_har_pick}), we obtain that
$$
\alpha_{\mathbb{D}}=\inf
\left\{\|\phi\|_{L_p(\mathbb{A}_{x_0})}:\, \phi \in L_p^{{\rm
discr}}(\mathbb{A}_{x_0}),\, \int \limits _{x_0}^t
|\phi(x)|u^{\#}(x)\, dx=1\quad \forall t\in
\partial \mathbb{D}\right\};
$$
here $L_p^{{\rm discr}}(\mathbb{A})$ is the set of functions
$\phi: \mathbb{A}\rightarrow \R$ that are constants on each edge
of the metric tree $\mathbb{A}$.

Let $\mathbb{D}=({\cal D}, \, \Delta_{\mathbb{D}})\in {\cal
J}_{x_0}$. Set $\mathbb{D}^+=({\cal D}, \, \Delta)$, $\mathbb{D}^-
=(\mathaccent'27 {\cal D}, \, \Delta)$. Prove that ${\cal D}\in
{\cal J}'_{\hat\xi}$. Indeed, let $\xi\in {\bf V}({\cal D})$, and
suppose that there exist vertices $\xi'\in {\bf V}_1(\xi)
\backslash {\bf V}({\cal D})$ and $\xi''\in {\bf V}_1(\xi) \cap
{\bf V}({\cal D})$. Let $\eta$ be a vertex in $\tilde {\cal A}$
that is the direct predecessor of $\xi$. Then the point $(1, \,
(\eta, \, \xi))=(0, \, (\xi, \, \xi'))= (0, \, (\xi, \, \xi''))$
belongs to the boundary of $\mathbb{D}$, as well as it is not
maximal.

We have
$$
\|w^{\#}\chi _{\mathbb{A} _{x_0}\backslash
\mathbb{D}}\|_{L_p(\mathbb{A})} \le \|w^{\#}\chi _{\mathbb{A}
_{x_0}\backslash \mathbb{D}^-}\|_{L_p(\mathbb{A})}= \|w\chi
_{{\cal A}_{\hat \xi}\backslash \mathaccent'27 {\cal
D}}\|_{l_p({\cal A})},
$$
$$
\alpha_{\mathbb{D}}\ge \inf \left\{\|\phi\|_{L_p(\mathbb{A}
_{x_0})}:\, \phi \in L_p^{{\rm discr}}(\mathbb{A}_{x_0}),\, \int
\limits _{x_0}^t |\phi(x)|u^{\#}(x)\, dx=1\quad \forall t\in
\partial \mathbb{D}^+\right\}=\beta_{{\cal D}}.
$$
This implies the upper estimate for $\mathfrak{S}_{{\cal A}
_{\hat\xi},u,w}$. Prove the lower estimate. Notice that if
$\mathbb{D}=\mathbb{D}^+$, then $\alpha _{\mathbb{D}}=
\beta_{{\cal D}}$. If in addition ${\bf V}_{\max}({\cal D})\cap
{\bf V}_{\max}({\cal A})=\varnothing$, then
$$
\|w^{\#}\chi _{\mathbb{A}_{x_0}\backslash \mathbb{D}} \|
_{L_p(\mathbb{A}_{x_0})} = \|w\chi _{{\cal A} _{\hat\xi}\backslash
{\cal D}}\|_{l_p({\cal A}_{\hat\xi})} \stackrel {(\ref{ccc}),
(\ref{uuu})}{\underset{p,\hat C}{\asymp}} \|w\chi _{{\cal A}
_{\hat\xi}\backslash \mathaccent'27 {\cal D}}\|_{l_p({\cal A}
_{\hat\xi})}.
$$
Hence,
$$
\mathfrak{S}_{{\cal A} _{\hat\xi},u,w} \underset{p, \hat
C}{\gtrsim} \sup\left\{ \frac{\|w\chi _{{\cal A}
_{\hat\xi}\backslash \mathaccent'27 {\cal D}}\|_{l_p({\cal A}
_{\hat\xi})}}{\beta_{{\cal D}}}:\; {\cal D}\in {\cal J}'_{\hat
\xi}, \; {\bf V}_{\max}({\cal D})\cap {\bf V}_{\max}({\cal
A})=\varnothing\right\}=:\Sigma.
$$
Prove that
$$
\Sigma\underset{p,\hat C}{\asymp}\sup\left\{ \frac{\|w\chi _{{\cal
A} _{\hat\xi}\backslash \mathaccent'27 {\cal D}}\|_{l_p({\cal A}
_{\hat\xi})}}{\beta_{{\cal D}}}:\; {\cal D}\in {\cal J}'_{\hat
\xi}\right\}.
$$
To this end, it is sufficient to show that if ${\bf V}({\cal
D})\ne \{\hat \xi\}$, then
$$
\frac{\|w\chi _{{\cal A} _{\hat\xi}\backslash \mathaccent'27 {\cal
D}}\|_{l_p({\cal A} _{\hat\xi})}}{\beta_{{\cal
D}}}\underset{p,\hat C}{\lesssim} \Sigma.
$$
Indeed, set ${\cal D}_1=\mathaccent'27 {\cal D}$. Then from
(\ref{betad}), (\ref{ccc}) and (\ref{uuu}) it follows that
$\|w\chi _{{\cal A} _{\hat\xi}\backslash \mathaccent'27 {\cal
D}}\|_{l_p({\cal A} _{\hat\xi})}\underset{p, \hat C}{\asymp}
\|w\chi _{{\cal A} _{\hat\xi}\backslash \mathaccent'27 {\cal
D}_1}\|_{l_p({\cal A} _{\hat\xi})}$ and $\beta_{{\cal D}}
\underset{p, \hat C}{\asymp} \beta_{{\cal D}_1}$. It remains to
observe that ${\cal D}_1\in {\cal J}'_{\hat \xi}$ and ${\bf
V}_{\max}({\cal D}_1)\cap {\bf V}_{\max}({\cal A})=\varnothing$.
\end{proof}

\begin{Sta}
\label{cor2} Let $\xi_*\in {\bf V}({\cal A})$, ${\bf
V}_1(\xi_*)=\{\xi_1, \, \dots, \, \xi _m\}$, ${\cal D}_j\in {\cal
J}'_{\xi_j}$, $1\le j\le m$, ${\cal D}=\{\xi_*\}\cup {\cal
D}_1\cup\dots \cup {\cal D}_m$. Then
\begin{align}
\label{bjoin} \beta _{{\cal D}}^{-1}= \left\|\left(\beta^{-1}
_{\{\xi_*\}}, \, \left\|(\beta_{{\cal D}_j}) _{j=1} ^m \right\|
^{-1} _{l_p^m} \right) \right\|_{l^2_{p'}}.
\end{align}
\end{Sta}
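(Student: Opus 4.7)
The plan is to argue by a direct variational reduction, exploiting the fact that once one fixes the value of $f$ at the root $\xi_*$, the constraints decouple across the branches ${\cal D}_j$. Write $c = f(\xi_*)u(\xi_*)$; then for any admissible $f$ in the infimum defining $\beta_{\cal D}$, the constraints $\sum_{\xi_*\le\xi'\le\xi} f(\xi')u(\xi')=1$ for $\xi\in {\bf V}_{\max}({\cal D}_j)$ reduce (once the $\xi_*$ term is subtracted) to $\sum_{\xi_j\le\xi'\le\xi} f(\xi')u(\xi')=1-c$ on each ${\cal D}_j$. Note that without loss of generality $f$ is supported on ${\bf V}({\cal D})$, since other values only enlarge the $l_p({\cal A})$-norm.

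First I would prove the lower bound. Applying the definition of $\beta_{{\cal D}_j}$ to the rescaled function $f/(1-c)$ on ${\cal D}_j$ gives $\|f\chi_{{\cal D}_j}\|_{l_p({\cal A})}\ge |1-c|\,\beta_{{\cal D}_j}$ (with the convention that only $c=1$ is admissible if some $\beta_{{\cal D}_j}=\infty$). Summing over $j$ and adding the contribution $|c|^p u(\xi_*)^{-p}=|c|^p\beta_{\{\xi_*\}}^p$ from the root yields
\begin{equation*}
\|f\|_{l_p({\cal A})}^p \ge |c|^p\beta_{\{\xi_*\}}^p + |1-c|^p\|(\beta_{{\cal D}_j})_{j=1}^m\|_{l_p^m}^p.
\end{equation*}
Setting $A=\beta_{\{\xi_*\}}$ and $B=\|(\beta_{{\cal D}_j})_{j=1}^m\|_{l_p^m}$, I then compute $\inf_{c\in\R}(|c|^pA^p+|1-c|^pB^p)$ via Hölder's inequality in the form $1=c+(1-c)\le (|c|^pA^p+|1-c|^pB^p)^{1/p}(A^{-p'}+B^{-p'})^{1/p'}$ (taking $c,1-c\ge 0$ since the infimum is clearly attained there). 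This gives the lower bound
\begin{equation*}
\beta_{\cal D}^p \ge (A^{-p'}+B^{-p'})^{-p/p'},
\end{equation*}
which, after raising to the $-p'/p$ power, is exactly the claimed identity $\beta_{\cal D}^{-p'}=\beta_{\{\xi_*\}}^{-p'}+\|(\beta_{{\cal D}_j})_{j=1}^m\|_{l_p^m}^{-p'}$, i.e.\ formula \eqref{bjoin}.

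For the matching upper bound I would choose $c^*\in[0,1]$ attaining equality in the Hölder step above (explicitly $c^*=A^{-p'}/(A^{-p'}+B^{-p'})$), and on each ${\cal D}_j$ take an $\varepsilon$-nearly-optimal $f_j$ achieving the infimum in the definition of $\beta_{{\cal D}_j}$. Combining these into $f(\xi_*)=c^*/u(\xi_*)$ and $f|_{{\cal D}_j}=(1-c^*)f_j$ produces an admissible competitor for $\beta_{\cal D}$ whose $l_p$-norm is at most $((A^{-p'}+B^{-p'})^{-p/p'})^{1/p}+O(\varepsilon)$, completing the proof after $\varepsilon\to 0$. The endpoint cases $p=1$ (so $p'=\infty$) and $p=\infty$ (so $p'=1$) need to be checked by directly interpreting the $l^2_{p'}$ norm as $\max$ or sum respectively; in both cases the one-variable optimization of $|c|^p A^p+|1-c|^p B^p$ is elementary.

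The main technical obstacle is really only bookkeeping: making sure that the scaling argument handles $c=1$ (when all mass is at $\xi_*$) and the degenerate situations where some $\beta_{{\cal D}_j}$ is zero or infinite, and verifying the Hölder equality case matches the variational choice of $c^*$. The structural content is identical to the metric-tree recursion of Theorem~\ref{rekurs}; indeed an alternative route would be to embed ${\cal A}$ into a metric tree by assigning each edge length $1$ (as in the proof of Lemma~\ref{hardy_cr}) and quote Theorem~\ref{rekurs} directly, but the direct variational argument is shorter since here all functions in the discrete problem are automatically constant on edges.
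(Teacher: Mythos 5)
Your proof is correct, but it takes a genuinely different route from the paper: the paper's entire proof of Proposition~\ref{cor2} is the single sentence ``This assertion follows from Theorem~\ref{rekurs}'', i.e.\ it embeds the discrete tree into the metric tree $({\cal A}, \Delta)$ with unit edge lengths (exactly as already set up in the proof of Lemma~\ref{hardy_cr}) and invokes the Evans--Harris--Pick recursion there. You instead give a direct, self-contained variational argument: fix $c=f(\xi_*)u(\xi_*)$, observe the constraints decouple across branches, use the scaling invariance to reduce each branch to $\beta_{{\cal D}_j}$, and then solve the residual one-dimensional minimization $\inf_{c\in[0,1]}(|c|^pA^p+|1-c|^pB^p)=(A^{-p'}+B^{-p'})^{-p/p'}$ by H\"older with a matching equality case. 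This buys independence from the metric-tree machinery and makes the $l_p$/$l_{p'}$ duality explicit; the cost is that you must handle the degenerate and endpoint cases by hand, which you do flag. One small imprecision: after your H\"older step you say the resulting inequality ``is exactly the claimed identity''; at that point you have only $\beta_{\cal D}^{-p'}\le A^{-p'}+B^{-p'}$, and the reverse inequality comes from the explicit competitor with $c^*=A^{-p'}/(A^{-p'}+B^{-p'})$ in the next paragraph, so the two halves together give the identity. You also correctly note that the alternative of quoting Theorem~\ref{rekurs} directly is available --- that is precisely what the paper does.
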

This assertion follows from Theorem \ref{rekurs}.

\subsection{The reduction lemma}

Let $\psi:\R_+\rightarrow \R_+$ be an increasing function,
$\psi(0)=0$, let $({\cal A}, \, \xi_0)$ be a tree with a finite
vertex set. In addition, suppose that there exists $C_*\ge 1$ such
that for any $j_0$, $j\in \Z_+$, $j\ge j_0$, $\xi\in {\bf
V}_{j_0}(\xi_0)$
\begin{align}
\label{vjj0} {\rm card}\, {\bf V}^{\cal A}_{j-j_0}(\xi)\le
C_*2^{\psi(j)-\psi(j_0)}.
\end{align}

Let $u:{\bf V}({\cal A})\rightarrow (0, \, +\infty)$, $u(\xi)
=u_j$ for $\xi \in {\bf V}^{{\cal A}}_j(\xi_0)$. Suppose that
there is $\sigma \in (0, \, 1)$ such that for any $j\in \N$
\begin{align}
\label{sig_uj} \frac{u_j 2^{-\frac{\psi(j)}{p}}}{u_{j-1}
2^{-\frac{\psi(j-1)}{p}}} \ge \sigma ^{-\frac{1}{p'}}.
\end{align}

For each $\xi_*\in {\bf V}({\cal A})$ and for any subtree ${\cal
D}\in {\cal J}'_{\xi_*}$ we define the quantity $\beta_{{\cal D}}$
by (\ref{betad}). Then $\beta_{\{\xi\}}\stackrel
{(\ref{odno_toc})}{=}u_j^{-1}$ for $\xi\in {\bf V}_j(\xi_0)$, and
if ${\cal D}\ne \{\xi_*\}$, then (\ref{bjoin}) holds.

We set
$$
B_{{\cal D}}=\frac{1}{\beta_{{\cal D}}}, \;\;\;\; S_{{\cal D}}=
\left( \sum \limits _{\xi \in {\bf V}_{\max}({\cal D})}
u^{-p}(\xi)\right)^{-\frac 1p}.
$$

Let ${\cal D}\in {\cal J}'_{\xi_*}$, $\hat\xi\in {\bf V}({\cal
D})$, ${\bf V}_1(\hat\xi)=\{\xi_1, \, \dots, \, \xi_{m_1}\}$. Then
$$
{\cal D}_{\hat\xi} =\{\hat\xi\} \cup {\cal D}_{\xi_1} \cup \dots
\cup {\cal D}_{\xi_{m_1}}, \;\; {\cal D}_{\xi_j}\in {\cal
J}'_{\xi_j}, \;\; 1\le j\le m_1.
$$

Let $\varepsilon >0$, $1\le i\le m_1$. A vertex $\xi_i$ is said to
be $(\varepsilon, \, {\cal D})$-regular if
\begin{align}
\label{prav} B_{{\cal D}_{\xi_i}}^{-p}\ge \varepsilon
\frac{B^{-p}_{{\cal D}_{\xi_1}}+\dots+B^{-p}_{{\cal
D}_{\xi_{m_1}}}}{m_1}.
\end{align}
Notice that if $\varepsilon <1$, then at least one of the vertices
$\xi_i$ is $(\varepsilon, {\cal D})$-regular. A path $(\eta_0, \,
\dots, \, \eta_l)$ in ${\cal D}$ is said to be $(\varepsilon, \,
{\cal D})$-regular if $\eta_0<\eta_1<\dots<\eta_l$ and for any
$1\le j\le l$ the vertex $\eta_j$ is $(\varepsilon, \, {\cal
D})$-regular.
\begin{Lem}
\label{reduct} There exists $\hat\sigma=\hat\sigma(p, \, C_*)>0$
such that if (\ref{sig_uj}) holds with $\sigma \in (0, \,
\hat\sigma)$, then for any $\xi_*\in {\bf V}({\cal A})$ and for
any subtree ${\cal D} \in {\cal J}'_{\xi_*}$
\begin{align}
\label{sdasbd} S_{{\cal D}}\le B_{{\cal D}}\le 2S_{{\cal D}}.
\end{align}
\end{Lem}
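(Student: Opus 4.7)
The plan is to prove the two inequalities separately by induction on ${\cal D}$, using (\ref{bjoin}) and the companion formula $S_{{\cal D}}^{-p}=\sum_j S_{{\cal D}_j}^{-p}$ for non-singleton subtrees.

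For the easy direction $S_{{\cal D}}\le B_{{\cal D}}$: in the base case ${\cal D}=\{\xi_*\}$, both sides equal $u(\xi_*)$ by (\ref{odno_toc}); the inductive step with $B_{{\cal D}_j}\ge S_{{\cal D}_j}$ and (\ref{bjoin}) gives
$$B_{{\cal D}}^{p'}=u(\xi_*)^{p'}+\Bigl(\sum_j B_{{\cal D}_j}^{-p}\Bigr)^{-p'/p}\ge\Bigl(\sum_j S_{{\cal D}_j}^{-p}\Bigr)^{-p'/p}=S_{{\cal D}}^{p'}.$$

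For the hard direction $B_{{\cal D}}\le 2S_{{\cal D}}$ (where smallness of $\sigma$ is used), the key a~priori estimate is: if $\xi_*$ is internal in ${\cal D}$, then all leaves of ${\cal D}$ lie at depth $\ge j_0+1$ in ${\cal A}$, so iterating (\ref{sig_uj}) to get $u_{j_0+l}^{-p}\le u(\xi_*)^{-p}\sigma^{lp/p'}2^{-(\psi(j_0+l)-\psi(j_0))}$ and combining with the fanout bound (\ref{vjj0}) yields
$$S_{{\cal D}}^{-p}\le\sum_{l\ge 1}C_*\,2^{\psi(j_0+l)-\psi(j_0)}\,u_{j_0+l}^{-p}\le \frac{C_*\sigma^{p/p'}}{1-\sigma^{p/p'}}\,u(\xi_*)^{-p},$$
equivalently $u(\xi_*)^{p'}/S_{{\cal D}}^{p'}\le\delta(\sigma):=\bigl(C_*\sigma^{p/p'}/(1-\sigma^{p/p'})\bigr)^{p'/p}$, which tends to $0$ as $\sigma\to 0^+$.

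A naive induction with uniform constant $2^{p'}$ does not close, since (\ref{bjoin}) adds a $u(\xi_*)^{p'}$-term at every step. To close it I would set $\kappa_{{\cal D}}:=(B_{{\cal D}}/S_{{\cal D}})^{p'}-1$ and apply Jensen's inequality to the convex function $t\mapsto(1+t)^{-p/p'}$ inside $(\sum_j B_{{\cal D}_j}^{-p})^{-p'/p}$; with $\tau(\xi):=u(\xi)^{p'}/S_{{\cal D}_\xi}^{p'}$ and $\alpha_j:=S_{{\cal D}_j}^{-p}/S_{{\cal D}}^{-p}$, this yields the sharper recursion $\kappa_{{\cal D}}\le \tau(\xi_*)+\sum_j\alpha_j\kappa_{{\cal D}_j}$. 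Iterating telescopes the weights to $\pi_{{\cal D}}(\xi)=S_{{\cal D}_\xi}^{-p}/S_{{\cal D}}^{-p}$ and gives $\kappa_{{\cal D}}\le\sum_{\xi\in{\bf V}({\cal D})\setminus{\bf V}_{\max}({\cal D})}\pi_{{\cal D}}(\xi)\tau(\xi)$.

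The main obstacle is to bound this sum by a quantity depending only on $p$ and $C_*$, not on the size of ${\cal D}$: the uniform bound $\tau(\xi)\le\delta(\sigma)$ summed over internal vertices grows with the height of ${\cal D}$ and so is too crude. The remedy is to use the identity $\pi_{{\cal D}}(\xi)\tau(\xi)=(u(\xi)A_\xi)^{p'}/A_{\xi_*}$ with $A_\xi:=S_{{\cal D}_\xi}^{-p}$, and to exploit, along each root-to-leaf path, the super-geometric growth of $u$ from (\ref{sig_uj}) combined with the monotonicity $A_\xi\le A_{\mathrm{pred}(\xi)}$; a careful path-by-path estimate forces each path's contribution to telescope into a geometric series in $\sigma$, yielding $\kappa_{{\cal D}}\le C(p,C_*)\sigma/(1-\sigma^{p/p'})$. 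Choosing $\hat\sigma(p,C_*)$ so that this quantity is at most $2^{p'}-1$ for $\sigma<\hat\sigma$ concludes the proof.
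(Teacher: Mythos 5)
Your easy direction ($S_{\mathcal D}\le B_{\mathcal D}$) matches the paper's argument. For the hard direction, the first two stages of your argument are correct and genuinely different from the paper: the a priori bound $\tau(\xi_*)=u(\xi_*)^{p'}/S_{\mathcal D}^{p'}\le\delta(\sigma)$ via (\ref{vjj0}) and iterated (\ref{sig_uj}) is fine, and the Jensen step (convexity of $t\mapsto(1+t)^{-p/p'}$ applied to the inner $(\sum_j B_{{\mathcal D}_j}^{-p})^{-p'/p}$) really does give the recursion $\kappa_{\mathcal D}\le\tau(\xi_*)+\sum_j\alpha_j\kappa_{{\mathcal D}_j}$ with $\sum_j\alpha_j=1$, which telescopes to
\begin{align*}
\kappa_{\mathcal D}\le \frac{1}{A_{\xi_*}}\sum_{\xi\in{\bf V}({\mathcal D})\setminus{\bf V}_{\max}({\mathcal D})}\bigl(u(\xi)A_\xi\bigr)^{p'}, \qquad A_\xi:=S_{{\mathcal D}_\xi}^{-p}.
\end{align*}
This is a cleaner reduction than the paper's, which instead proves by induction the height-dependent bound $B_{\mathcal D}\le\bigl(\prod_{j=1}^{l_{\mathcal D}}(1+\sigma^{j/2})^{2/p'}\bigr)S_{\mathcal D}$, where $l_{\mathcal D}$ is the length of a maximal $(\varepsilon,{\mathcal D})$-regular path; the regularity notion is used to prove the key estimate (\ref{bw0}), which makes the growing product converge.

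However, there is a genuine gap at the final step, which you yourself flag as "the main obstacle" but then wave away. The claim that the telescoped sum is $\le C(p,C_*)\sigma/(1-\sigma^{p/p'})$ uniformly in ${\mathcal D}$ is not proved, and the sketched route does not visibly close. Rewriting the sum by exchanging the order ($A_\xi=\sum_{\eta\text{ leaf}\ge\xi}u(\eta)^{-p}$), the obvious "path-by-path" bound using $A_\xi\le A_{\xi_*}$ yields $\kappa_{\mathcal D}\lesssim\sigma\,A_{\xi_*}^{p'/p-1}\sum_\eta u(\eta)^{p'-p}$, and the quantity $\sum_\eta u(\eta)^{p'-p}$ is not controlled by $A_{\xi_*}^{1-p'/p}=\bigl(\sum_\eta u(\eta)^{-p}\bigr)^{1-p'/p}$: for $p>2$ the power-mean inequality goes the wrong way, for $p=2$ it reduces to counting leaves, and for $p<2$ the exponent $p'/p-1$ is positive and the bound is again too weak. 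Likewise, bounding each term by the a priori $\tau(\xi)\le\delta(\sigma)$ gives a sum that grows linearly in the height of ${\mathcal D}$, since the weights $\pi_{\mathcal D}(\xi)$ along a single surviving path need not decay. What is actually needed is some decay mechanism along paths that is uniform over all admissible tree shapes; the paper obtains this by selecting the $(\varepsilon,{\mathcal D})$-regular path and proving $\beta_{\{\xi_*\}}^{-p'}\le\sigma^{\hat l/2}\bigl(\sum_i B_{{\mathcal D}_i}^{-p}\bigr)^{-p'/p}$, which is precisely the delicate combinatorial lemma your outline does not replace. Until the bound on $\sum_\xi(u(\xi)A_\xi)^{p'}/A_{\xi_*}$ is established by an actual argument (not "a careful path-by-path estimate"), the proof is incomplete.
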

\begin{proof}
Let
$$
\nu_{{\cal D}}=\max \{j\in \Z_+:{\bf V}_j(\xi_*)\ne \varnothing\}.
$$
If $\nu_{{\cal D}}=0$, then it follows from the definition that
$S_{{\cal D}}= B_{{\cal D}}$. Let us prove the assertion for
$\nu_{{\cal D}}>0$. In this case,
\begin{align}
\label{dxi} {\cal D}=\{\xi_*\}\cup {\cal D}_1\cup \dots \cup{\cal
D}_{m_1}, \;\;{\cal D}_j\in {\cal J}'_{\xi_j}, \;\; \xi_j\in {\bf
V}_1(\xi_*).
\end{align}
Notice that
\begin{align}
\label{sdp} S_{{\cal D}}^{-p}=\sum \limits _{i=1}^{m_1} S_{{\cal
D}_i}^{-p}.
\end{align}

Prove the first inequality, i.e.,
\begin{align}
\label{sdbd} S_{{\cal D}}\le B_{{\cal D}}.
\end{align}
Let $\nu\in \Z_+$, and let the assertion be proved for any ${\cal
D}$ such that $\nu_{{\cal D}}\le \nu$. Prove the assertion for
$\nu_{{\cal D}}=\nu+1$. From (\ref{bjoin}) and the induction
assumption it follows that
$$
B_{{\cal D}}^{p'}=B_{\{\xi_*\}}^{p'}+\left(\sum \limits
_{j=1}^{m_1} B_{{\cal D}_j}^{-p}\right)^{-\frac{p'}{p}}\ge
\left(\sum \limits _{j=1}^{m_1} S_{{\cal D}_j}^{-p}\right)
^{-\frac{p'}{p}}\stackrel{(\ref{sdp})}{=} S_{{\cal D}}^{p'}.
$$

Prove the second inequality. It is sufficient to check that
\begin{align}
\label{bdle888} B_{{\cal D}}\le \left(\prod _{j=1}^\infty (1+
\sigma ^{j/2})^{\frac{2}{p'}} \right)S_{{\cal D}}
\end{align}
holds for  $\sigma\in (0, \, \hat\sigma(p, \, C_*))$.

Let $\varepsilon \in (0, \, 1)$ (it will be chosen later). Then
the end of any $(\varepsilon, \, {\cal D})$-regular path that has
a maximal length and starts from $\xi_*$ is a maximal vertex in
${\cal D}$ (otherwise one of its direct successors is
$(\varepsilon, \, {\cal D})$-regular). Denote by $l_{{\cal D}}$
the maximal length of $(\varepsilon, \, {\cal D})$-regular paths
that start in $\xi_*$. We show by induction on $\nu_{{\cal D}}$
that for $\sigma\in (0, \, \hat\sigma(p, \, C_*))$
\begin{align}
\label{bdle} B_{{\cal D}}\le \left(\prod _{j=1}^{l_{{\cal D}}}(1+
\sigma ^{j/2})^{\frac{2}{p'}} \right)S_{{\cal D}}.
\end{align}
This implies (\ref{bdle888}).

If $\nu_{{\cal D}}=0$, then ${\cal D}$ is a single vertex.
Therefore, $B_{{\cal D}}=S_{{\cal D}}$ and (\ref{bdle}) is true.

Let $\nu_{{\cal D}}>0$. Then (\ref{dxi}) holds, and by
(\ref{vjj0})
$$
m_1\le C_*2^{\psi(j_0+1)-\psi(j_0)}.
$$

Let $l_i= l_{{\cal D}_i}$. Denote by $I_1$ the set of $i\in \{1,
\, \dots, \, m_1\}$ such that $\xi_i$ is $(\varepsilon, \, {\cal
D})$-regular, $I_2= \{1, \, \dots, \, m_1\}\backslash I_1$. Set
$$\hat l=\max \{l_i:\, i\in I_1\}+1.$$ Then $\hat l=l_{{\cal
D}}$.

Prove that there exists $\sigma_*=\sigma_*(\varepsilon, \, C_*, \,
p)>0$ such that for any $\sigma\in (0, \, \sigma_*)$
\begin{align}
\label{bw0} \beta_{\{\xi_*\}}^{-p'}\le \sigma^{\frac{\hat l}{2}}
\left( \sum \limits _{i=1} ^{m_1} B _{{\cal D}_i} ^{-p}
\right)^{-\frac{p'}{p}}.
\end{align}
Suppose the converse, i.e.,
\begin{align}
\label{bw1} \beta_{\{\xi_*\}}^{-p'}> \sigma^{\frac{\hat l}{2}}
\left( \sum \limits _{i=1} ^{m_1} B _{{\cal D}_i} ^{-p}
\right)^{-\frac{p'}{p}}.
\end{align}
Let $\xi_*\in {\bf V}_{j_0}(\xi_0)$. Then
$$
\beta_{\{\xi_*\}}^{-1} =u_{j_0} =2^{\frac{\psi(j_0)}{p}} \cdot
\left(\prod _{j=1} ^{\hat l} \frac{u_{j_0+j-1} 2^{-\frac{
\psi(j_0+j-1)}{p}}} {u_{j_0+j} 2^{-\frac{\psi(j_0+j)}{p}}}
\right)\cdot u_{j_0+\hat l} 2^{-\frac{\psi(j_0+\hat l)}{p}}
\stackrel{(\ref{sig_uj})}{\le}
$$
$$
\le u_{j_0+\hat l}2^{-\frac{ \psi(j_0+\hat l)}{p}
+\frac{\psi(j_0)}{p}}\cdot \sigma ^{\frac{\hat l}{p'}}.
$$
This together with (\ref{bw1}) yields
$$
\left( \sum \limits _{i=1} ^{m_1} B _{{\cal D}_i} ^{-p}
\right)^{-\frac{p'}{p}} <\sigma ^{\frac{\hat l}{2}} u_{j_0+\hat
l}^{p'}2^{-\frac{ p'\psi(j_0+\hat l)}{p} +\frac{p'\psi(j_0)}{p}},
$$
i.e.,
\begin{align}
\label{b_i_sigma} \sum \limits _{i=1} ^{m_1} B _{{\cal D}_i} ^{-p}
>\sigma ^{-\frac{p\hat l}{2p'}} 2^{\psi(j_0+\hat l)-\psi(j_0)}
u_{j_0+\hat l} ^{-p}.
\end{align}
Let $(\xi_*, \, \eta_1, \, \dots, \, \eta_{\hat l})$ be an
$(\varepsilon, \, {\cal D})$-regular path in ${\cal D}$. Then
\begin{align}
\label{bdx1} B_{{\cal D}_{\eta_1}}^{-p} \ge
\frac{\varepsilon}{m_1} \sum \limits _{i=1} ^{m_1} B _{{\cal D}_i}
^{-p}>\frac{\varepsilon}{m_1} \sigma ^{-\frac{p\hat l}{2p'}}
2^{\psi(j_0+\hat l)-\psi(j_0)} u_{j_0+\hat l} ^{-p}.
\end{align}
Let $2\le j\le \hat l$, ${\cal D}_{\eta_{j-1}} =\{\eta_{j-1}\}
\cup {\cal D}_{j,1}\cup \dots \cup {\cal D}_{j,m_j}$. Then
$$
B_{{\cal D}_{\eta_{j-1}}}^{p'} \stackrel{(\ref{bjoin})}{=}
B_{\{\eta_{j-1}\}}^{p'}+\left( \sum \limits _{i=1}^{m_j}
B^{-p}_{{\cal D}_{j,i}} \right)^{-\frac{p'}{p}}\ge \left( \sum
\limits _{i=1}^{m_j} B^{-p}_{{\cal D}_{j,i}}
\right)^{-\frac{p'}{p}},
$$
i.e., $B_{{\cal D}_{\eta_{j-1}}}^{-p}\le \sum \limits _{i=1}^{m_j}
B^{-p}_{{\cal D}_{j,i}}$. Since the vertex $\eta_j$ is
$(\varepsilon, \, {\cal D})$-regular, we have
$$
B^{-p}_{{\cal D} _{\eta_j}}\ge \frac{\varepsilon}{m_j} \sum
\limits _{i=1}^{m_j} B^{-p}_{{\cal D}_{j,i}} \ge
\frac{\varepsilon}{m_j} B^{-p} _{{\cal D}_{\eta_{j-1}}}.
$$
Therefore,
$$
B^{-p}_{{\cal D}_{\eta_{\hat l}}} \ge \frac{\varepsilon}{m_{\hat
l}} B^{-p}_{{\cal D}_{\eta_{\hat l-1}}} \ge \frac{\varepsilon
^2}{m_{\hat l-1}m_{\hat l}}B^{-p}_{{\cal D}_{\eta_{\hat l-2}}} \ge
\dots \ge \frac{\varepsilon ^{\hat l-1}}{m_2\dots m_{\hat l}}
B^{-p} _{{\cal D}_{\eta_1}} \stackrel{(\ref{bdx1})}{\ge}
$$
$$
\ge \frac{\varepsilon ^{\hat l}}{m_1m_2\dots m_{\hat l}}
2^{\psi(j_0+\hat l)-\psi(j_0)} u^{-p}_{j_0+\hat l}\sigma
^{-\frac{p\hat l}{2p'}}.
$$
The vertex $\eta_{\hat l}$ is maximal in ${\cal D}$. Hence,
$B_{{\cal D}_{\eta_{\hat l}}}=B_{\{\eta_{\hat l}\}}=u_{j_0+\hat
l}$. In addition,
$$
m_1m_2\dots m_{\hat l}\stackrel{(\ref{vjj0})}{\le} C_*^{\hat l}
\prod _{i=0}^{\hat l-1}2^{\psi(j_0+i+1)-\psi(j_0+i)}=C_*^{\hat
l}2^{\psi(j_0+\hat l)-\psi(j_0)}.
$$
Thus,
$$
u_{j_0+\hat l}^{-p}\ge \frac{\varepsilon ^{\hat l}}{C_*^{\hat
l}2^{\psi(j_0+\hat l)-\psi(j_0)}}2^{\psi(j_0+\hat l)-\psi(j_0)}
u^{-p}_{j_0+\hat l}\sigma ^{-\frac{p\hat l}{2p'}},
$$
i.e., $1\ge \varepsilon^{\hat l}C_*^{-\hat l}\sigma ^{-\frac{p\hat
l}{2p'}}$, or $\sigma ^{\frac{p}{2p'}}\ge \varepsilon C_*^{-1}$.
For $0<\sigma\le \frac{(C_*^{-1}\varepsilon)^{\frac{2p'}{p}}}{2}$
we get the contradiction. This proves (\ref{bw0}).

Now let us prove (\ref{bdle}). We have
\begin{align}
\label{bbi} B_{{\cal D}}^{p'} =\beta^{-p'}_{\{\xi_*\}}+ \left(\sum
\limits _{i=1}^{m_1} B_{{\cal D}_i} ^{-p}\right)^{-\frac{p'}{p}}
\stackrel{(\ref{bw0})}{\le} \left(1+\sigma ^{\hat
l/2}\right)\left(\sum \limits _{i=1}^{m_1} B_{{\cal D}_i}
^{-p}\right)^{-\frac{p'}{p}}.
\end{align}

Show that there exists $\varepsilon_*=\varepsilon_*(p)\in (0, \,
1)$ such that for any $\varepsilon \in (0, \, \varepsilon_*)$,
$0<\sigma <\min \left(\frac 12, \, \sigma_*(\varepsilon, \, C_*,
\, p)\right)$
\begin{align}
\label{as_d} \left(B_{{\cal D}_1}^{-p}+\dots +B_{{\cal
D}_{m_1}}^{-p}\right)^{-\frac 1p}\le \left(S_{{\cal D}_1}
^{-p}+\dots +S_{{\cal D}_{m_1}}^{-p}\right)^{-\frac 1p}
\left(\prod _{j=1}^{\hat l-1}(1+ \sigma ^{j/2})^{\frac{2}{p'}}
\right) \cdot (1+\sigma ^{\hat l/2})^{\frac{1}{p'}}.
\end{align}
Then (\ref{bbi}), (\ref{sdp}) and (\ref{as_d}) yield (\ref{bdle}).

The relation (\ref{as_d}) is equivalent to
\begin{align}
\label{asd1} \sum \limits _{i=1}^{m_1} B_{{\cal D}_i}^{-p} -
\frac{\sum \limits _{i=1}^{m_1}S_{{\cal D}_i}^{-p}}
{(1+\sigma^{\hat l/2})^{\frac{p}{p'}}\prod _{j=1}^{\hat
l-1}(1+\sigma^{j/2})^{\frac{2p}{p'}}}\ge 0.
\end{align}
Consider separately sums in $i\in I_1$ and in $i\in I_2$. Let
$l=\max _{1\le i\le m_1} l_i+1$. By the induction hypotheses,
\begin{align}
\label{bdi_si} B_{{\cal D}_i}\le \left(\prod _{j=1}^{l_i}(1+
\sigma ^{j/2})^{\frac{2}{p'}} \right)S_{{\cal D}_i}\le \left(\prod
_{j=1}^{\hat l-1}(1+ \sigma ^{j/2})^{\frac{2}{p'}} \right)
S_{{\cal D}_i}, \;\;i\in I_1,
\end{align}
\begin{align}
\label{bdi_si1} B_{{\cal D}_i}\le \left(\prod _{j=1}^{l_i}(1+
\sigma ^{j/2})^{\frac{2}{p'}} \right)S_{{\cal D}_i}\le \left(\prod
_{j=1}^{l-1}(1+ \sigma ^{j/2})^{\frac{2}{p'}} \right) S_{{\cal
D}_i}, \;\;i\in I_2.
\end{align}
Hence,
$$
\sum \limits _{i\in I_1} B_{{\cal D}_i}^{-p} -\frac{\sum \limits
_{i\in I_1}S_{{\cal D}_i}^{-p}}{(1+\sigma^{\hat
l/2})^{\frac{p}{p'}}\prod _{j=1}^{\hat
l-1}(1+\sigma^{j/2})^{\frac{2p}{p'}}}\stackrel{(\ref{bdi_si})}{\ge}
$$
$$
\ge \frac{\sum \limits _{i\in I_1} S_{{\cal D}_i}^{-p}}{\prod
_{j=1}^{\hat l-1}(1+ \sigma ^{j/2})^{\frac{2p}{p'}}}-\frac{\sum
\limits _{i\in I_1}S_{{\cal D}_i}^{-p}}{(1+\sigma^{\hat
l/2})^{\frac{p}{p'}}\prod _{j=1}^{\hat
l-1}(1+\sigma^{j/2})^{\frac{2p}{p'}}}=
$$
$$
=\left(\sum \limits _{i\in I_1}S_{{\cal D}_i}^{-p}\right) \frac{
(1+\sigma^{\hat l/2})^{\frac{p}{p'}}-1 } { (1+\sigma^{\hat
l/2})^{\frac{p}{p'}}\prod _{j=1}^{\hat l-1} (1+\sigma^{j/2})
^{\frac{2p}{p'}} }\underset{p}{\gtrsim} \left(\sum \limits _{i\in
I_1}S_{{\cal D}_i}^{-p}\right)\sigma^{\hat
l/2}\stackrel{(\ref{sdbd})}{\ge} \left(\sum \limits _{i\in
I_1}B_{{\cal D}_i}^{-p}\right)\sigma^{\hat l/2}
$$
(the penultimate relation holds for $0<\sigma<\frac 12$).
Therefore, there exists $C_1(p)>0$ such that
\begin{align}
\label{ii1} \sum \limits _{i\in I_1} B_{{\cal D}_i}^{-p}
-\frac{\sum \limits _{i\in I_1}S_{{\cal
D}_i}^{-p}}{(1+\sigma^{\hat l/2})^{\frac{p}{p'}}\prod _{j=1}^{\hat
l-1}(1+\sigma^{j/2})^{\frac{2p}{p'}}}\ge C_1(p) \left(\sum \limits
_{i\in I_1}B_{{\cal D}_i}^{-p}\right)\sigma^{\hat l/2}.
\end{align}
If $l=\hat l$, then the sum in $i\in I_2$ is estimated similarly.
In this case, (\ref{asd1}) is proved. Let $l\ge \hat l+1$. Then we
have for $0<\sigma<\min \left(\frac 12, \, \sigma_*(\varepsilon,
\, C_*, \, p)\right)$
$$
\sum \limits _{i\in I_2} B_{{\cal D}_i}^{-p} -\frac{\sum \limits
_{i\in I_2}S_{{\cal D}_i}^{-p}}{(1+\sigma^{\hat
l/2})^{\frac{p}{p'}}\prod _{j=1}^{\hat
l-1}(1+\sigma^{j/2})^{\frac{2p}{p'}}}\stackrel{(\ref{bdi_si1})}{\ge}
$$
$$
\ge \frac{\sum \limits _{i\in I_2}S_{{\cal D}_i}^{-p}}{\prod
_{j=1}^{l-1}(1+ \sigma ^{j/2})^{\frac{2p}{p'}}}- \frac{\sum
\limits _{i\in I_2}S_{{\cal D}_i}^{-p}}{(1+\sigma^{\hat
l/2})^{\frac{p}{p'}}\prod _{j=1}^{\hat
l-1}(1+\sigma^{j/2})^{\frac{2p}{p'}}}\ge
$$
$$
\ge \left(\sum \limits _{i\in I_2}S_{{\cal D}_i}^{-p}\right)
\frac{1-(1+\sigma^{\hat l/2})^{\frac{p}{p'}}\prod _{j=\hat
l+1}^{l-1}(1+\sigma^{j/2})^{\frac{2p}{p'}}}{\prod _{j=1}^{l-1}(1+
\sigma ^{j/2})^{\frac{2p}{p'}}}\ge
$$
$$
\ge -C_2(p)\left(\sum \limits _{i\in I_2}S_{{\cal
D}_i}^{-p}\right) \sigma^{\hat l/2}\stackrel{(\ref{bdi_si1})}{\ge}
-C_3(p)\left(\sum \limits _{i\in I_2}B_{{\cal
D}_i}^{-p}\right)\sigma^{\hat l/2},
$$
where $C_2(p)>0$, $C_3(p)>0$. Thus,
\begin{align}
\label{ii2} \sum \limits _{i\in I_2} B_{{\cal D}_i}^{-p}
-\frac{\sum \limits _{i\in I_1}S_{{\cal
D}_i}^{-p}}{(1+\sigma^{\hat l/2})^{\frac{p}{p'}}\prod _{j=1}^{\hat
l-1}(1+\sigma^{j/2})^{\frac{2p}{p'}}}\ge -C_3(p) \left(\sum
\limits _{i\in I_2}B_{{\cal D}_i}^{-p}\right)\sigma^{\hat l/2}.
\end{align}

From definitions of $I_1$ and $I_2$ we get
$$
\sum \limits _{i\in I_2} B^{-p}_{{\cal D}_i} \le \sum \limits
_{i\in I_2}\frac{\varepsilon}{m_1} \sum \limits _{j=1}^{m_1}
B^{-p}_{{\cal D}_j}\le \varepsilon \sum \limits _{j=1}^{m_1}
B^{-p}_{{\cal D}_j},
$$
$$
\sum \limits _{i\in I_1}B^{-p}_{{\cal D}_i}=\sum \limits
_{i=1}^{m_1} B^{-p}_{{\cal D}_i}-\sum \limits _{i\in I_2}
B^{-p}_{{\cal D}_i}\ge (1-\varepsilon) \sum \limits _{i=1}^{m_1}
B^{-p}_{{\cal D}_i}.
$$

This together with (\ref{ii1}) and (\ref{ii2}) implies that
$$
\sum \limits _{i=1}^{m_1} B_{{\cal D}_i}^{-p} - \frac{\sum \limits
_{i=1}^{m_1}S_{{\cal D}_i}^{-p}} {(1+\sigma^{\hat
l/2})^{\frac{p}{p'}}\prod _{j=1}^{\hat
l-1}(1+\sigma^{j/2})^{\frac{2p}{p'}}}\ge
$$
$$
\ge C_1(p)\left(\sum \limits _{i\in I_1}B_{{\cal
D}_i}^{-p}\right)\sigma^{\hat l/2} -C_3(p) \left(\sum \limits
_{i\in I_2}B_{{\cal D}_i}^{-p}\right)\sigma^{\hat l/2}\ge
$$
$$
\ge \sigma^{\hat l/2} \left((1-\varepsilon) C_1(p)-\varepsilon
C_3(p)\right) \sum \limits _{i=1}^{m_1} B^{-p}_{{\cal D}_i} >0
$$
for sufficiently small $\varepsilon$. This completes the proof of
(\ref{asd1}).
\end{proof}

Let $w:{\bf V}({\cal A})\rightarrow (0, \, \infty)$, $w(\xi)=w_j$
for $\xi\in {\bf V}_j^{\cal A}(\xi_0)$. Suppose that there exists
$\sigma\in \left(0, \, \frac 12\right)$ such that for any $j\in
\N$
\begin{align}
\label{wjps} \frac{w_j \cdot 2^{\frac{\psi(j)}{p}}} {w_{j-1}\cdot
2^{\frac{\psi(j-1)}{p}}}\le \sigma^{\frac 1p}.
\end{align}
Given ${\cal D}\in {\cal J}'_{\xi_*}$, we denote
\begin{align}
\label{rd_def_qd} R_{{\cal D}}=\left(\sum \limits _{\xi\in {\bf
V}_{\max} ({\cal D})}\sum \limits _{\xi'\ge \xi}
w^p(\xi')\right)^{1/p}, \;\;\;\; Q_{{\cal D}}= \left(\sum \limits
_{\xi\in {\bf V}_{\max}({\cal D})} w^p(\xi)\right)^{1/p}.
\end{align}
From (\ref{wjps}) and (\ref{vjj0}) it follows that there exists
$\sigma_*=\sigma_*(p, \, C_*)>0$ such that for any
$0<\sigma<\sigma_*$
\begin{align}
\label{rdqd} Q_{{\cal D}}\le R_{{\cal D}}\le 2Q_{{\cal D}}.
\end{align}

Construct the function $\psi_*$ by induction as follows:
\begin{align}
\label{psi_st} \psi_*(0)=0,\quad
2^{\psi_*(j)-\psi_*(j-1)}=\left[2^{\psi(j)-\psi_*(j-1)}\right],
\quad j\in \N.
\end{align}
Then $2^{\psi_*(j)-\psi_*(j-1)}\in \N$ and
\begin{align}
\label{2p_j} 2^{\psi_*(j)}\le 2^{\psi(j)}\le 2^{\psi_*(j)+1}.
\end{align}

\label{symha}Let $\xi_*\in {\bf V}^{\cal A}_{j_0} (\xi_0)$, and
let $(\hat{\cal A}, \, \hat{\xi})$ be a tree such that
\begin{align}
\label{def_hata} {\rm card}\, {\bf V}_1^{\hat{\cal
A}}(\xi)=2^{\psi_*(j+1)-\psi_*(j)}, \quad \xi\in {\bf
V}_{j-j_0}^{\hat{\cal A}}(\hat \xi), \quad j\ge j_0.
\end{align}

\begin{Lem}
\label{symmetr} Let ${\cal D}\subset {\cal A}$ be a tree rooted at
$\xi_*\in {\bf V}^{\cal A}_{j_0} (\xi_0)$. Then there exists
$\sigma_0=\sigma_0(p, \, C_*)>0$ satisfying the following
property: if (\ref{sig_uj}) and (\ref{wjps}) hold for some
$\sigma\in (0, \, \sigma_0)$, then there exists a tree $\hat {\cal
D}\subset \hat{\cal A}$ rooted at $\hat\xi$ such that $S_{{\cal
D}}\underset{p}{\lesssim} S_{\hat{\cal D}}$ and $Q_{{\cal
D}}\underset{p,C_*}{\lesssim} Q_{\hat{\cal D}}$.
\end{Lem}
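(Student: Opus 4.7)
Since both trees carry weights that depend only on the level relative to the root, we have
\begin{align*}
S_{\cal D}^{-p}=\sum_{j\ge j_0}M_j\,u_j^{-p},\qquad Q_{\cal D}^p=\sum_{j\ge j_0}M_j\,w_j^p,
\end{align*}
where $M_j$ is the number of maximal vertices of ${\cal D}$ at distance $j-j_0$ from $\xi_*$, and the analogous formulas hold for $\hat{\cal D}$ with counts $\hat M_j$. Hence it will suffice to construct $\hat{\cal D}\subset\hat{\cal A}$ rooted at $\hat\xi$ with $\hat M_j\asymp_{p,C_*} M_j$ for every $j\ge j_0$; both inequalities $S_{\cal D}\lesssim S_{\hat{\cal D}}$ and $Q_{\cal D}\lesssim Q_{\hat{\cal D}}$ then drop out immediately.

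For the construction, I would first compare the two trees level by level. By (\ref{vjj0}), ${\cal D}$ contains at most $C_*2^{\psi(j)-\psi(j_0)}$ vertices at level $j-j_0$, while (\ref{def_hata}) together with (\ref{2p_j}) shows that $\hat{\cal A}$ has exactly $2^{\psi_*(j)-\psi_*(j_0)}$ vertices at that level, a number comparable (within a factor $4C_*$) to the bound for ${\cal D}$. I would then inductively select, for $j=j_0,j_0+1,\ldots$, a set $\hat M_j$ of vertices at level $j-j_0$ of $\hat{\cal A}$, of cardinality $\asymp M_j$ and with each element incomparable with every previously chosen vertex, and put
\begin{align*}
\hat{\cal D}=\bigcup_{j\ge j_0}\bigcup_{\hat\eta\in\hat M_j}[\hat\xi,\hat\eta],
\end{align*}
so that $\bigcup_j\hat M_j$ is exactly the set of maximal vertices of $\hat{\cal D}$.

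The main obstacle is checking feasibility of the greedy selection: after $\hat M_{j_0},\ldots,\hat M_{j-1}$ have been fixed, one must show that the number of vertices at level $j-j_0$ of $\hat{\cal A}$ that are not dominated by earlier choices is still $\gtrsim M_j$. This is the step where the geometric-type hypotheses (\ref{sig_uj}) and (\ref{wjps}), with $\sigma\in(0,\sigma_0)$ for $\sigma_0=\sigma_0(p,C_*)$ small, are used essentially: they force $u_j^{-p}$ and $w_j^p$ to decay geometrically in $j$, so each of the sums defining $S_{\cal D}^{-p}$ and $Q_{\cal D}^p$ is controlled by its lowest terms. This allows one to take $|\hat M_j|$ progressively smaller than $M_j$ at higher levels without spoiling either comparison, which in turn frees up enough room in the descendants of earlier choices to keep the selection feasible. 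Balancing the resulting thinning factor against the branching-ratio bound $4C_*$ fixes $\sigma_0(p,C_*)$ and completes the proof.
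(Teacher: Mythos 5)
Your outline correctly identifies the level-by-level construction of $\hat{\cal D}$ and correctly spots that feasibility of the greedy selection is the crux. But the resolution you sketch — ``take $|\hat M_j|$ progressively smaller than $M_j$'' — is both vague and, taken at face value, wrong in direction. Writing $\hat M_j$ for ${\rm card}\,{\bf V}_{\max}(\hat{\cal D})$ at level $j-j_0$ and $M_j$ for the analogous count in ${\cal D}$, the two inequalities you need are
\begin{align*}
\sum_j \hat M_j\,u_j^{-p}\lesssim\sum_j M_j\,u_j^{-p}
\qquad\text{and}\qquad
\sum_j M_j\,w_j^{p}\lesssim\sum_j \hat M_j\,w_j^{p},
\end{align*}
which pull in opposite directions: dropping vertices helps the first and hurts the second. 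So uniformly thinning $\hat M_j$ below $M_j$ across many levels breaks the $Q$-comparison, and no amount of geometric decay in $u_j,w_j$ rescues it — the decay bounds each summand from above but says nothing about which of the $M_j$ are actually nonzero. Moreover, the budget constraint in $\hat{\cal A}$ (each selected vertex at level $j'-j_0$ blocks $2^{\psi_*(j)-\psi_*(j')}$ vertices at level $j-j_0$) can genuinely run out: if many levels of ${\cal D}$ are close to saturated, $\sum_{j'\le j}M_{j'}2^{-\psi_*(j')}$ may exceed $2^{-\psi_*(j_0)}$ by a factor proportional to the number of such levels, and you cannot keep $\hat M_j\asymp M_j$ throughout.

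What the paper does instead is not progressive thinning but a \emph{saturate-and-stop} construction: maintain $\hat M_{j_t}={\rm card}\,{\bf V}_{t,{\cal D}}$ exactly (not merely up to constants) as long as there is room (condition (\ref{cvl1})); at the first level $l+1$ where room runs out (condition (\ref{l_pl_1})), take as ${\bf V}_{l+1,\hat{\cal D}}$ all remaining vertices of $\hat{\cal A}$ at level $j_{l+1}-j_0$ and terminate, discarding all later levels of ${\cal D}$. The $Q$-comparison then rests on the separate estimate (\ref{sll23333}),
\begin{align*}
\sum_{t=\nu}^{s}w_{j_t}^{p}\,{\rm card}\,{\bf V}_{t,{\cal D}}\le w_{j_\nu}^{p}\,{\rm card}\,{\bf V}^{\cal A}_{j_\nu-j_0}(\xi_*),
\end{align*}
which shows that the entire tail of ${\cal D}$'s $Q^p$-contribution is absorbed by a single fully packed level — exactly the level you saturate in $\hat{\cal A}$, which occupies at least half of it. The $S$-comparison at the stopping level requires a case split (is ${\rm card}\,{\bf V}_{l+1,{\cal D}}$ large or small relative to $2^{\psi_*(j_{l+1})-\psi_*(j_0)}$?) rather than any appeal to decay of $u_j$. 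These two ingredients — the saturation inequality (\ref{sll23333}), proved by its own induction on $s-\nu$, and the dichotomy at the termination level — are the actual mechanism, and they are absent from your plan.
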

\begin{proof}
Set
$$
\{j_1, \, \dots, \, j_s\}=\{j\in \N:\; {\bf V}_{\max}({\cal
D})\cap {\bf V}_{j-j_0}(\xi_*)\ne \varnothing\}, \;\;
j_1<\dots<j_s.
$$
For each $1\le l\le s$, we denote ${\bf V}_{l,{\cal D}}={\bf
V}_{\max}({\cal D})\cap {\bf V}^{\cal A}_{j_l-j_0}(\xi_*)$,
$$
{\bf U}_s={\bf V}_{j_{s-1}-j_0}^{\cal A}(\xi_*)\cap {\bf V}({\cal
D})\backslash {\bf V}_{\max}({\cal D}).
$$
Then
\begin{align}
\label{xi_u_s} {\bf V}_{s,{\cal D}}\subset \cup _{\xi\in {\bf
U}_s}{\bf V}^{\cal A}_{j_s-j_{s-1}}(\xi).
\end{align}
By (\ref{vjj0}) and (\ref{wjps}), there exists
$\sigma_1=\sigma_1(p, \, C_*)$ such that for any $\sigma \in (0,
\, \sigma_1)$, $1\le \nu\le s$
\begin{align}
\label{ujs} w_{j_\nu}^{p}{\rm card}\, {\bf V}_{j_\nu-j_{\nu-1}}^{\cal A}(\xi)
\le w_{j_{\nu-1}}^{p}.
\end{align}
Show that for any $\sigma \in (0, \, \sigma_1)$
\begin{align}
\label{sll23333}\sum \limits _{t=\nu}^s w_{j_t}^{p} {\rm card}\,
{\bf V}_{t,{\cal D}} \le w_{j_\nu}^{p}\cdot {\rm card}\,{\bf
V}^{\cal A}_{j_\nu-j_0}(\xi_*).
\end{align}
We use induction on $s-\nu$. If $s-\nu=0$, then the inequality is
trivial. Let $s-\nu\ge 1$. Denote by $\tilde{\cal D}$ the subtree
in ${\cal A}_{\xi_*}$ with the set of maximal vertices $\left(\cup
_{t=1}^{s-1} {\bf V}_{t,{\cal D}} \right)\cup {\bf U}_s$ and the
root $\xi_*$. Then
$$
\sum \limits _{t=\nu}^s w_{j_t}^{p} {\rm card}\, {\bf V}_{t,{\cal
D}} \stackrel{(\ref{xi_u_s}),(\ref{ujs})}{\le} \sum \limits
_{t=\nu}^{s-1}w_{j_t}^{p} {\rm card}\, {\bf V}_{t,{\cal D}} +
w_{j_{s-1}}^{p}{\rm card}\, {\bf U}_s=
$$
$$
=\sum \limits _{t=\nu}^{s-1} w_{j_t}^{p} {\rm card}\, {\bf
V}_{t,\tilde{\cal D}} \le w_{j_\nu}^{p}{\rm card}\, {\bf V}^{\cal
A}_{j_\nu-j_0}(\xi_*)
$$
(the last inequality holds by the induction assumption). This
completes the proof of (\ref{sll23333}).

Applying induction on $l$, construct the set ${\bf V}_{l,\hat{\cal
D}}\subset {\bf V}(\hat{\cal A}_{\hat \xi})$ with the following
properties:
\begin{enumerate}
\item if $1\le t<\nu\le l$, then
\begin{align}
\label{zzz} {\bf V}_{\nu,\hat{\cal D}}\cap \left(\cup _{\xi\in
{\bf V}_{t,\hat{\cal D}}} {\bf V}^{\hat{\cal
A}}_{j_\nu-j_t}(\xi)\right) =\varnothing;
\end{align}
\item if
\begin{align}
\label{t1l} \cup _{t=1}^l \cup _{\xi\in {\bf V}_{t,\hat{\cal D}}}
{\bf V}_{j_l-j_t}^{\hat{\cal A}}(\xi)={\bf V}^{\hat{\cal
A}}_{j_l-j_0}(\hat \xi),
\end{align}
then the tree $\hat{\cal D}$ with the set of vertices
$$
{\bf V}(\hat{\cal D})=\cup_{t=1}^l \cup_{\xi\in {\bf V}
_{t,\hat{\cal D}}}[\hat \xi, \, \xi]
$$
satisfies ${\bf V}_{\max}(\hat{\cal D})=\cup_{1\le t\le l} {\bf
V}_{t,\hat{\cal D}}$, $S_{\cal D}\underset{p}{\lesssim}
S_{\hat{\cal D}}$ and $Q_{\cal D}\underset{p,C_*}{\lesssim}
Q_{\hat{\cal D}}$;
\item if
\begin{align}
\label{t1ll} \cup _{t=1}^l \cup _{\xi \in {\bf V}_{t,\hat{\cal
D}}} {\bf V}_{j_l-j_t} ^{\hat{\cal A}}(\xi)\ne {\bf V}^{\hat{\cal
A}}_{j_l-j_0}(\hat \xi),
\end{align}
then  ${\rm card}\,{\bf V}_{t,\hat{\cal D}}={\rm card}\, {\bf
V}_{t,{\cal D}}$ for any $1\le t\le l$.
\end{enumerate}
If (\ref{t1l}) holds for some $l$, then the construction is
interrupted. In this case, $\hat{\cal D}$ is the desired tree. If
(\ref{t1ll}) holds for any $l\le s$, then we take as $\hat{\cal
D}$ the tree with the vertex set $\cup_{1\le t\le s} \cup_{\xi\in
{\bf V}_{t,\hat{\cal D}}} [\hat \xi, \, \xi]$. In this case,
$S_{\hat{\cal D}}=S_{{\cal D}}$ and $Q_{\hat{\cal D}}=Q_{{\cal
D}}$.

{\bf The base of induction.} Let $l=1$. If ${\rm card}\, {\bf
V}_{1,{\cal D}}<\frac{1}{2} 2^{\psi_*(j_1)-\psi_*(j_0)}$, then we
take as ${\bf V}_{1,\hat{\cal D}}$ an arbitrary subset
$E_1\subset{\bf V}_{j_1-j_0}^{\hat{\cal A}}(\hat \xi)$ such that
${\rm card}\, E_1={\rm card}\, {\bf V} _{1,{\cal D}}$. By
(\ref{def_hata}), we have (\ref{t1ll}).

Let ${\rm card}\, {\bf V}_{1,{\cal D}}\ge \frac{1}{2}
2^{\psi_*(j_1)-\psi_*(j_0)}$. Then we set ${\bf V}_{1,\hat{\cal
D}}={\bf V}^{\hat{\cal A}}_{j_1-j_0}(\hat \xi)$ (in this case, (\ref{t1l})
holds). Hence, ${\bf V}(\hat{\cal D})=\cup _{j=0}^{j_1-j_0} {\bf
V}_j^{\hat{\cal A}}(\hat \xi)$, ${\bf V}_{\max}(\hat{\cal D})={\bf
V}_{1,\hat{\cal D}}$ and $$S_{\hat{\cal
D}}^{-p}\stackrel{(\ref{def_hata})}{=}
2^{\psi_*(j_1)-\psi_*(j_0)}u_{j_1}^{-p},\quad Q_{\hat{\cal
D}}^{p}\stackrel{(\ref{def_hata})}{=}
2^{\psi_*(j_1)-\psi_*(j_0)}w_{j_1}^{p}.$$ Further,
$$
S_{{\cal D}}^{-p} \ge {\rm card}\, {\bf V}_{1, {\cal D}}\cdot
u_{j_1}^{-p}\ge \frac{1}{2}2^{\psi_*(j_1)-\psi_*(j_0)}
u_{j_1}^{-p},
$$
which implies $S_{\cal D}\underset{p}{\lesssim} S_{\hat{\cal D}}$.
Prove that $Q_{\cal D}\underset{p, \, C_*}{\lesssim}
Q_{\hat{\cal D}}$. Indeed,
$$
Q_{\cal D}=\sum \limits _{t=1}^s {\rm card}\, {\bf V}_{t, \, {\cal
D}} \cdot w_{j_t}^p\stackrel{(\ref{sll23333})}{\le} w_{j_1}^p
\cdot {\rm card}\, {\bf V}^{\cal A}_{j_1-j_0}(\xi_*)
\stackrel{(\ref{vjj0}),(\ref{2p_j})}{\underset{C_*}{\lesssim}}
w_{j_1}^p \cdot 2^{\psi_*(j_1)-\psi_*(j_0)}=Q_{\hat{\cal D}}^{p}.
$$

{\bf The induction step.} Let $1\le l<s$,
\begin{align}
\label{2c0} \sum \limits _{t=1}^l {\rm card}\, {\bf V}_{t,{\cal
D}} \cdot 2^{\psi_*(j_l)-\psi_*(j_t)}<\frac{1}{2}\cdot
2^{\psi_*(j_l)-\psi_*(j_0)}.
\end{align}
Suppose that there are the sets ${\bf V}_{t,\hat{\cal D}} \subset
{\bf V}_{j_t-j_0}^{\hat{\cal A}}(\hat \xi)$, $1\le t\le l$,
satisfying (\ref{zzz}) and
\begin{align}
\label{cet} {\rm card}\, {\bf V}_{t,\hat{\cal D}}={\rm card}\,
{\bf V}_{t,{\cal D}}, \;\; 1\le t\le l.
\end{align}
Then
$$
\sum \limits _{t=1}^l \sum \limits _{\xi\in {\bf V}_{t,\hat{\cal
D}}} {\rm card}\, {\bf V}^{\hat{\cal A}}_{j_l-j_t}(\xi)
\stackrel{(\ref{def_hata}),(\ref{cet})}{\le}$$$$\le \sum \limits
_{t=1}^l {\rm card}\, {\bf V}_{t,{\cal D}} \cdot
2^{\psi_*(j_l)-\psi_*(j_t)}\stackrel{(\ref{2c0})}{<} \frac 12
2^{\psi_*(j_l)-\psi_*(j_0)}\stackrel{(\ref{def_hata})}{<} {\rm
card}\, {\bf V}^{\hat{\cal A}}_{j_l-j_0}(\hat \xi).
$$
Therefore, properties 1--3 of the sets ${\bf V}_{t,\hat{\cal D}}$
hold (property 2 is trivial, since (\ref{t1ll}) holds instead of
(\ref{t1l}); property 3 follows from (\ref{cet}), property 1
holds since we supposed that the sets satisfy (\ref{zzz})).

Construct the set ${\bf V}_{l+1,\hat{\cal D}}\subset {\bf
V}^{\hat{\cal A}}_{j_{l+1}-j_0}(\hat \xi) \backslash \cup_{t=1}^l
\cup_{\xi \in {\bf V}_{t,\hat{\cal D}}} {\bf V}^{\hat{\cal A}}
_{j_{l+1}-j_t}(\xi)$.

Let
\begin{align}
\label{cvl1} {\rm card}\, {\bf V}_{l+1,{\cal D}} +\sum \limits
_{t=1}^l {\rm card}\, {\bf V}_{t,{\cal D}}\cdot
2^{\psi_*(j_{l+1})-\psi_*(j_t)} < \frac{1}{2}
2^{\psi_*(j_{l+1})-\psi_*(j_0)}.
\end{align}
In this case, we take an arbitrary subset
$$
{\bf V}_{l+1,\hat{\cal D}}\subset{\bf V}_{j_{l+1}-j_0}^{\hat{\cal
A}}(\hat \xi) \backslash \cup_{t=1}^l \cup_{\xi \in {\bf
V}_{t,\hat{\cal D}}} {\bf V} ^{\hat{\cal A}}_{j_{l+1}-j_t}(\xi),
\;\; {\rm card}\, {\bf V}_{l+1,\hat{\cal D}} ={\rm card}\, {\bf
V}_{l+1,{\cal D}}.
$$
This set exists, since
$$
{\rm card}\, {\bf V}_{l+1,{\cal D}}+\sum \limits _{t=1}^l \sum
\limits _{\xi \in {\bf V}_{t,\hat{\cal D}}} {\rm card}\, {\bf
V}_{j_{l+1} -j_t} ^{\hat{\cal A}}(\xi) \stackrel
{(\ref{def_hata})}{=} {\rm card}\, {\bf V}_{l+1,{\cal D}}+ \sum
\limits _{t=1}^l {\rm card}\,{\bf V}_{t,\hat{\cal D}} \cdot
2^{\psi_*(j_{l+1})-\psi_*(j_t)}
$$
$$
\stackrel{(\ref{cet}),(\ref{cvl1})}{<}\frac 12 \cdot
2^{\psi_*(j_{l+1})-\psi_*(j_0)} \stackrel{(\ref{def_hata})}{<}{\rm
card}\, {\bf V}^{\hat{\cal A}} _{j_{l+1}-j_0}(\hat \xi).
$$
Then we have (\ref{zzz}), (\ref{2c0}) and (\ref{cet}) with $l+1$
instead of $l$. Hence, properties 1--3 for the sets $\{{\bf
V}_{t,\hat{\cal D}}\}_{t=1}^{l+1}$ hold.

Let
\begin{align}
\label{l_pl_1} {\rm card}\, {\bf V}_{l+1,{\cal D}} +\sum \limits
_{t=1}^l {\rm card}\, {\bf V}_{t,{\cal D}}\cdot 2^{\psi_*(j_{l+1})
-\psi_*(j_t)} \ge \frac{1}{2} 2^{\psi_*(j_{l+1})-\psi_*(j_0)}.
\end{align}
Then we set
\begin{align}
\label{vlel1} {\bf V}_{l+1,\hat{\cal D}}= {\bf V}^{\hat{\cal
A}}_{j_{l+1}-j_0}(\hat \xi) \backslash \cup_{t=1}^l \cup_{\xi \in
{\bf V}_{t,\hat{\cal D}}} {\bf V}^{\hat{\cal A}}
_{j_{l+1}-j_t}(\xi).
\end{align}
By construction, we have property 1 of the sets $\{{\bf
V}_{t,\hat{\cal D}}\}_{t=1}^{l+1}$ and (\ref{t1l}) (with $l+1$ instead of
$l$); i.e.,
\begin{align}
\label{zzz080} {\bf V}_{\nu,\hat{\cal D}}\cap \left(\cup _{\xi\in
{\bf V}_{t,\hat{\cal D}}} {\bf V}^{\hat{\cal
A}}_{j_\nu-j_t}(\xi)\right) =\varnothing, \quad 1\le t<\nu\le l+1,
\end{align}
$$
\cup _{t=1}^{l+1} \cup _{\xi\in {\bf
V}_{t,\hat{\cal D}}} {\bf V}_{j_{l+1}-j_t}^{\hat{\cal
A}}(\xi)={\bf V}^{\hat{\cal A}}_{j_{l+1}-j_0}(\hat \xi).
$$

Therefore, it is sufficient to check property 2. Define the tree
$\hat{\cal D}$ by
$$
{\bf V}(\hat{\cal D})=\cup_{t=1}^{l+1}\cup _{\xi\in {\bf
V}_{t,\hat{\cal D}}} [\hat \xi, \, \xi].
$$
From (\ref{zzz080}) it follows that
\begin{align}
\label{vmax} {\bf V}_{\max}(\hat{\cal D}) =\cup_{t=1}^{l+1} {\bf
V}_{t,\hat{\cal D}}.
\end{align}

We claim that $S_{{\cal D}}\underset{p}{\lesssim} S_{\hat{\cal
D}}$ and $Q_{{\cal D}}\underset{p,C_*}{\lesssim} Q_{\hat{\cal
D}}$. Indeed,
\begin{align}
\label{sd} S_{\hat{\cal D}}^{-p}
\stackrel{(\ref{cet}),(\ref{vmax})} {=}\sum \limits _{t=1}^l
u_{j_t}^{-p}{\rm card}\, {\bf V}_{t,{\cal D}} + u_{j_{l+1}}^{-p}
{\rm card}\, {\bf V}_{l+1,\hat{\cal D}},
\end{align}
\begin{align}
\label{shatd} S_{{\cal D}}^{-p} \ge\sum \limits _{t=1}^l
u_{j_t}^{-p}{\rm card}\, {\bf V}_{t,{\cal D}} + u_{j_{l+1}}^{-p}
{\rm card}\, {\bf V} _{l+1,{\cal D}},
\end{align}
\begin{align}
\label{qd} Q_{\hat{\cal D}}^{p}
\stackrel{(\ref{cet}),(\ref{vmax})} {=}\sum \limits _{t=1}^l
w_{j_t}^{p}{\rm card}\, {\bf V}_{t,{\cal D}} + w_{j_{l+1}}^{p}
{\rm card}\, {\bf V}_{l+1,\hat{\cal D}},
\end{align}
\begin{align}
\label{qhatd}
\begin{array}{c}
Q_{{\cal D}}^{p} =\sum \limits _{t=1}^l w_{j_t}^{p}{\rm card}\,
{\bf V}_{t,{\cal D}} + \sum \limits _{t=l+1} ^s w_{j_t}^{p} {\rm
card}\, {\bf V} _{t,{\cal D}} \stackrel{(\ref{sll23333})} {\le} \\
\le \sum \limits _{t=1}^l w_{j_t}^{p}{\rm card}\, {\bf V}_{t,{\cal
D}} +w_{j_{l+1}}^p {\rm card}\, {\bf V}^{\cal
A}_{j_{l+1}-j_0}(\xi_*) \stackrel{(\ref{vjj0}),(\ref{2p_j})}
{\underset{C_*}{\lesssim}} \\ \lesssim\sum \limits _{t=1}^l
w_{j_t}^{p}{\rm card}\, {\bf V}_{t,{\cal D}} +w_{j_{l+1}}^p \cdot
2^{\psi_*(j_{l+1})-\psi_*(j_0)}.
\end{array}
\end{align}

In addition,
\begin{align}
\label{el1} {\rm card}\, {\bf V}_{l+1,\hat{\cal D}}\le {\rm
card}\, {\bf V}^{\hat{\cal A}}_{j_{l+1}-j_0}(\hat\xi)
\stackrel{(\ref{def_hata})}{=} 2^{\psi_*(j_{l+1})-\psi_*(j_0)}.
\end{align}

{\it Case 1.} Let
\begin{align}
\label{xi_v_td} \sum \limits _{t=1}^l {\rm card}\, {\bf
V}_{t,{\cal D}}\cdot 2^{\psi_*(j_{l+1})-\psi_*(j_t)}< \frac{1}{4}
2^{\psi_*(j_{l+1})-\psi_*(j_0)}.
\end{align}
Then
\begin{align}
\label{cv1d} {\rm card}\, {\bf V}_{l+1,{\cal D}} \ge
2^{\psi_*(j_{l+1})-\psi_*(j_0) -2}.
\end{align}

Indeed,
$$
{\rm card}\, {\bf V}_{l+1,{\cal D}} \stackrel{(\ref{l_pl_1}),
(\ref{xi_v_td})}{\ge} \frac{1}{2}2^{\psi_*(j_{l+1})-\psi_*(j_0)}
-\frac{1}{4} 2^{\psi_*(j_{l+1})-\psi_*(j_0)}
=\frac{1}{4}2^{\psi_*(j_{l+1})-\psi_*(j_0)}.
$$
From (\ref{sd}), (\ref{shatd}),  (\ref{el1}) and (\ref{cv1d}) it
follows that $S_{{\cal D}}\underset{p}{\lesssim} S_{\hat{\cal
D}}$.

Prove that $Q_{{\cal D}}\underset{p,C_*}{\lesssim} Q_{\hat{\cal D}}$.
By (\ref{qd}) and (\ref{qhatd}), it suffices to check that
${\rm card}\, {\bf V}_{l+1,\hat{\cal D}}\ge 2^{\psi_*(j_{l+1})-\psi_*(j_0)-1}$.
We have
$$
{\rm card}\, {\bf V}_{l+1,\hat{\cal D}}\stackrel{(\ref{def_hata}),
(\ref{cet}),(\ref{vlel1})}{\ge} {\rm card}\, {\bf V}^{\hat{\cal
A}}_{j_{l+1}-j_0}(\hat \xi)- \sum \limits _{t=1}^l {\rm card}\,
{\bf V}_{t,{\cal D}}\cdot
2^{\psi_*(j_{l+1})-\psi_*(j_t)}\stackrel{(\ref{xi_v_td})}{\ge}
$$
$$
=2^{\psi_*(j_{l+1})-\psi_*(j_0)} -\frac 14 \cdot
2^{\psi_*(j_{l+1})-\psi_*(j_0)}\ge
2^{\psi_*(j_{l+1})-\psi_*(j_0)-1}.
$$

{\it Case 2.} Let
\begin{align}
\label{sltl} \sum \limits _{t=1}^l {\rm card}\,  {\bf V}_{t,{\cal
D}}\cdot 2^{\psi_*(j_{l+1})-\psi_*(j_t)}\ge \frac{1}{4}
2^{\psi_*(j_{l+1})-\psi_*(j_0)}.
\end{align}
Then by (\ref{sig_uj}), (\ref{wjps}) and (\ref{2p_j}), there exists
$\sigma'_1=\sigma'_1(p, \, C_*)$ such that for any $\sigma\in (0, \,
\sigma'_1)$
$$
\sum \limits _{t=1}^l u_{j_t}^{-p} {\rm card}\, {\bf V}_{t,{\cal
D}} \ge \sum \limits _{t=1}^l u_{j_{l+1}}^{-p}
2^{\psi_*(j_{l+1})-\psi_*(j_t)}{\rm card}\, {\bf V}_{t,{\cal D}}
\stackrel{(\ref{sltl})}{\ge} \frac{u_{j_{l+1}}^{-p}}{4}
2^{\psi_*(j_{l+1})-\psi_*(j_0)},
$$
$$
\sum \limits _{t=1}^l w_{j_t}^{p} {\rm card}\, {\bf V}_{t,{\cal
D}} \ge \sum \limits _{t=1}^l w_{j_{l+1}}^{p}
2^{\psi_*(j_{l+1})-\psi_*(j_t)}{\rm card}\, {\bf V}_{t,{\cal D}}
\stackrel{(\ref{sltl})}{\ge} \frac{w_{j_{l+1}}^{p}}{4}
2^{\psi_*(j_{l+1})-\psi_*(j_0)}.
$$
This together with (\ref{sd}), (\ref{shatd}), (\ref{qd}), (\ref{qhatd}) and
(\ref{el1}) implies that
$$S_{{\cal D}}^{-p}\ge \sum \limits _{t=1}^l
u_{j_t}^{-p} {\rm card}\, {\bf V}_{t,{\cal D}}
\underset{p}{\asymp} S_{\hat{\cal D}}^{-p},$$
$$Q_{{\cal D}}^{p}\underset{p, C_*}{\asymp} \sum \limits _{t=1}^l
w_{j_t}^{p} {\rm card}\, {\bf V}_{t,{\cal D}}
\underset{p}{\asymp} Q_{\hat{\cal D}}^{p}.$$ This completes the
proof.
\end{proof}

Let (\ref{ccc}) and (\ref{uuu}) hold, let $\hat \sigma$ be such as
in Lemma \ref{reduct}, and let $\sigma_0$ be such as in Lemma
\ref{symmetr}. Take $\sigma\in (0, \, \min\{\hat \sigma, \,
\sigma_0\})$. By Lemma \ref{hardy_cr},
\begin{align}
\label{sauv} \mathfrak{S}_{{\cal A}_{\xi_*},u,w} \underset{p,\hat
C}{\asymp} \sup _{{\cal D}\in {\cal J}'_{\xi_*}} \|w\chi_{{\cal
A}_{\xi_*}\backslash \mathaccent'27{\cal D}}\|_{l_p({\cal
A}_{\xi_*})} B_{{\cal D}} \stackrel{(\ref{rd_def_qd})}{=}\sup
_{{\cal D}\in {\cal J}'_{\xi_*}} R_{{\cal D}}B_{{\cal
D}}\stackrel{(\ref{sdasbd}),(\ref{rdqd})} {\underset{p} {\asymp}}
\sup _{{\cal D}\in {\cal J}'_{\xi_*}} Q_{{\cal D}}S_{{\cal D}}.
\end{align}
\begin{Lem}
\label{hata} Let $\xi_*\in {\bf V}^{\cal A}_{j_0} (\xi_0)$, $\hat
u(\xi)=u_j$, $\hat w(\xi)=w_j$ for any $\xi\in {\bf
V}_{j-j_0}^{\hat{\cal A}} (\hat\xi)$. Then there exists
$\sigma_2=\sigma_2(p, \, C_*)>0$ such that $\mathfrak{S}_{{\cal
A}_{\xi_*},u,w}\underset{p,\hat C,C_*}{\lesssim}
\mathfrak{S}_{\hat{\cal A},\hat u,\hat w}$ for any $\sigma \in (0,
\, \sigma_2)$.
\end{Lem}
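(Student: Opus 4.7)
The strategy is to pass both sides through the equivalent description (\ref{sauv}) in terms of $Q_{\cal D}S_{\cal D}$ and then to transport the relevant supremum via Lemma \ref{symmetr} from the tree ${\cal A}$ to the regularized tree $\hat{\cal A}$.

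I first fix $\sigma_2\in(0,\min\{\hat\sigma,\sigma_0,\sigma_*\})$, where $\hat\sigma$ is the threshold from Lemma \ref{reduct}, $\sigma_0$ the threshold from Lemma \ref{symmetr}, and $\sigma_*$ the threshold appearing in (\ref{rdqd}); since each of these depends only on $p$ and $C_*$, this gives $\sigma_2=\sigma_2(p,C_*)$. Under the hypotheses of the lemma, the estimate (\ref{sauv}) applied to ${\cal A}_{\xi_*}$ yields
\begin{align*}
\mathfrak{S}_{{\cal A}_{\xi_*},u,w}\underset{p,\hat C}{\lesssim}\sup_{{\cal D}\in{\cal J}'_{\xi_*}}Q_{\cal D}S_{\cal D}.
\end{align*}
For each such ${\cal D}$, Lemma \ref{symmetr} produces a subtree $\hat{\cal D}\subset\hat{\cal A}$ rooted at $\hat\xi$ with $S_{\cal D}\underset{p}{\lesssim}S_{\hat{\cal D}}$ and $Q_{\cal D}\underset{p,C_*}{\lesssim}Q_{\hat{\cal D}}$. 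Thus it remains to establish
\begin{align*}
Q_{\hat{\cal D}}S_{\hat{\cal D}}\le\mathfrak{S}_{\hat{\cal A},\hat u,\hat w}
\end{align*}
for every such $\hat{\cal D}$.

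Since $\hat{\cal D}$ need not belong to ${\cal J}'_{\hat\xi}$, I would not invoke (\ref{sauv}) for $\hat{\cal A}$ (which would force a verification of (\ref{ccc}) there), but instead test the Hardy-type inequality (\ref{dh}) on $\hat{\cal A}$ with the elementary function $f$ defined by $f(\xi)=\hat u^{-1}(\xi)$ for $\xi\in{\bf V}_{\max}(\hat{\cal D})$ and $f(\xi)=0$ otherwise. Then $\|f\|_{l_p(\hat{\cal A})}=S_{\hat{\cal D}}^{-1}$. For any $\xi\in{\bf V}(\hat{\cal A})$ with $\xi\ge\xi_{\max}$ for some $\xi_{\max}\in{\bf V}_{\max}(\hat{\cal D})$, such $\xi_{\max}$ is unique because two incomparable vertices of a tree cannot share a descendant, so $\sum_{\xi'\le\xi}\hat u(\xi')f(\xi')=\hat u(\xi_{\max})\hat u^{-1}(\xi_{\max})=1$. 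Summing, the left-hand side of (\ref{dh}) is bounded below by $R_{\hat{\cal D}}\ge Q_{\hat{\cal D}}$, while its right-hand side equals $\mathfrak{S}_{\hat{\cal A},\hat u,\hat w}\cdot S_{\hat{\cal D}}^{-1}$. Rearranging gives $Q_{\hat{\cal D}}S_{\hat{\cal D}}\le\mathfrak{S}_{\hat{\cal A},\hat u,\hat w}$; combined with the preceding inequalities, the lemma follows.

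The main obstacle is essentially absorbed into Lemma \ref{symmetr}, which does the geometric work and determines the $p,C_*$-dependence of the $\sigma$-threshold. Once it has been applied, the comparison between the two Hardy constants reduces to a single elementary test of (\ref{dh}) with a one-level function, bypassing the need to apply Lemma \ref{hardy_cr} or Lemma \ref{reduct} to $\hat{\cal A}$, and thus avoiding the verification of the regularity hypotheses (\ref{ccc})--(\ref{uuu}) on that tree.
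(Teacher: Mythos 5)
Your proof is correct. The first two steps coincide with the paper's: you invoke (\ref{sauv}) for ${\cal A}_{\xi_*}$ to reduce to $\sup_{{\cal D}}Q_{{\cal D}}S_{{\cal D}}$ and then apply Lemma \ref{symmetr} to obtain $\hat{\cal D}\subset\hat{\cal A}$ with $S_{{\cal D}}\lesssim S_{\hat{\cal D}}$, $Q_{{\cal D}}\lesssim Q_{\hat{\cal D}}$. In the closing step you diverge: the paper applies (\ref{sauv}) a second time with $\hat{\cal D}$ as its own ambient tree (noting $\hat{\cal D}\in{\cal J}'_{\hat\xi}$ relative to itself) to get $S_{\hat{\cal D}}Q_{\hat{\cal D}}\le B_{\hat{\cal D}}R_{\hat{\cal D}}\lesssim\mathfrak{S}_{\hat{\cal D},\hat u,\hat w}\le\mathfrak{S}_{\hat{\cal A},\hat u,\hat w}$, whereas you substitute the test function $f=\hat u^{-1}\chi_{{\bf V}_{\max}(\hat{\cal D})}$ directly into (\ref{dh}). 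Your computation is airtight: in a tree the segment $[\hat\xi,\xi]$ meets ${\bf V}_{\max}(\hat{\cal D})$ in at most one vertex, so the inner sum equals $1$ exactly on $\{\xi:\exists\,\xi'\in{\bf V}_{\max}(\hat{\cal D}),\;\xi\ge\xi'\}$ and $0$ elsewhere, giving $\|f\|_{l_p(\hat{\cal A})}=S_{\hat{\cal D}}^{-1}$ and a left-hand side equal to $R_{\hat{\cal D}}\ge Q_{\hat{\cal D}}$, hence $Q_{\hat{\cal D}}S_{\hat{\cal D}}\le\mathfrak{S}_{\hat{\cal A},\hat u,\hat w}$ with absolute constant $1$. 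What this buys is a genuinely simpler finish: you use only the trivial ``necessity'' half of the Hardy criterion spelled out by hand, so you never need Lemma \ref{hardy_cr} or Lemma \ref{reduct}, nor the bounded-degree condition (\ref{ccc}), on the regularized tree $\hat{\cal A}$ — a verification the paper's second appeal to (\ref{sauv}) implicitly carries, and which is not literally inherited from ${\cal A}$ since $2^{\psi_*(j+1)-\psi_*(j)}$ is not a priori controlled by $\hat C$.
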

\begin{proof}
Suppose that the supremum of the right-hand side in (\ref{sauv})
is attained at the tree ${\cal D}\in {\cal J}'_{\xi_*}$. Apply
Lemma \ref{symmetr} and construct the tree $\hat{\cal D}\subset
\hat{\cal A}$ rooted at $\hat\xi$ such that $S_{{\cal D}}
\underset{p}{\lesssim} S_{\hat{\cal D}}$ and $Q_{{\cal D}}
\underset{p,C_*}{\lesssim} Q_{\hat{\cal D}}$. Apply (\ref{sauv})
to the trees ${\cal D}$ and $\hat{\cal D}$ and notice that
$\hat{\cal D}\in {\cal J}'_{\hat\xi}$ in respect to the tree
$\hat{\cal D}$. We get
$$
\mathfrak{S}_{{\cal A}_{\xi_*},u,w} \underset{p,\hat C}{\asymp}
S_{{\cal D}} Q_{{\cal D}}\, \underset{p,C_*} {\lesssim}\,
S_{\hat{\cal D}}Q_{\hat{\cal
D}}\stackrel{(\ref{sdasbd}),(\ref{rdqd})}{\le} B_{\hat{\cal
D}}R_{\hat{\cal D}} \underset{p,\hat C}{\lesssim} \mathfrak{S}_
{\hat{\cal D},\hat u,\hat w}\le \mathfrak{S}_ {\hat{\cal A},\hat
u,\hat w}
$$
(see Remark \ref{subtr}).
\end{proof}
\subsection{Estimates for the special class of weights}
Let $r=d$, $p=q$ and let the conditions (\ref{def_h}),
(\ref{yty}), (\ref{ghi_g0}), (\ref{psi_cond}), (\ref{muck}),
(\ref{beta}), (\ref{phi_g}), (\ref{ll}), (\ref{g0ag}) hold. From
(\ref{beta}) it follows that $\beta\le d$.

Let ${\cal T}$, $F$ be the tree and the mapping such as in Lemma
\ref{cor_omega_t}, and let $\overline{s}=\overline{s}(a, \, d)\in
\N$ be such as in Proposition \ref{dif_nu}. Let $m\in \N$ be
divisible in $\overline{s}$. Consider the partition $\{{\cal
D}_{j,i}\}_{j\in \Z_+, \, i\in \tilde I_j}$ of the tree ${\cal T}$
defined at the page \pageref{aij}. Fix $N\in \N$. Let
\label{a_def} ${\cal A}={\cal A}(m)$ be the tree with the set of
vertices $\{\eta_{j,i}\}_{0\le j\le N,\, i\in \tilde I_j}$ and
with the set of edges defined by
$$
{\bf V}^{\cal A}_1(\eta_{j,i})=\{\eta_{j+1,s}\}_{s\in \tilde
I^1_{j,i}}.
$$
Here $\tilde I^1_{j,i}$ is defined in (\ref{ijtl}). By Remark
\ref{jjj}, if ${\cal D}_{j',i'}$ follows the tree ${\cal
D}_{j,i}$, then $j'=j+1$ and $i'\in \tilde I^1_{j,i}$. Hence,
${\rm card}\, {\bf V}^{\cal A}_l(\eta_{j,i})={\rm card}\, \tilde
I^l_{j,i}$ for any $l\in \Z_+$.

By Lemma \ref{litt}, for any $j_0$, $j\in \{0, \, \dots, \, N\}$,
$j\ge j_0$, and for any $\xi\in {\bf V}_{j_0}^{\cal
A}(\eta_{0,1})$ we have
\begin{align}
\label{acvj} {\rm card}\, {\bf V} ^{\cal
A}_{j-j_0}(\xi)\underset{a,d,c_0}{\lesssim}
\frac{h(2^{-mj_0})}{h(2^{-mj})}\stackrel{(\ref{def_h})}{=}
2^{\psi(j)-\psi(j_0)}
\end{align}
with
\begin{align}
\label{pjmt} \psi(j)=m\theta j-\log _2\Lambda(2^{-mj}).
\end{align}

Denote $\xi_0=\eta_{0,1}$. Set
\begin{align}
\label{ujwj}\begin{array}{c} u_j:=u(\xi)=\varphi_g(2^{-mj})\cdot
2^{-\frac{mdj}{p'}}\stackrel{(\ref{ghi_g0})}{=}
2^{mj\left(\beta_g-\frac{d}{p'}\right)}\Psi_g(2^{-mj}),
\\ w_j:=w(\xi)=\varphi_v(2^{-mj})\cdot 2^{-\frac{mdj}{p}}
\stackrel{(\ref{ghi_g0})}{=} 2^{mj\left(\beta_v-\frac{d}{p}
\right)}\Psi_v(2^{-mj}), \;\; \xi\in {\bf V}_j^{\cal A}(\xi_0).
\end{array}
\end{align}
\begin{Lem}
\label{p_eq_q_hardy}  There exists $m_*=m_*(\mathfrak{Z})\in \N$
such that for any $m\ge m_*$, $\xi_*\in {\bf V}_{j_0}^{{\cal
A}}(\xi_0)$ we have $\mathfrak{S}_{{\cal A}_{\xi_*},
u,w}\underset{\mathfrak{Z}}{\lesssim}
2^{mj_0(\beta-d)}\Psi(2^{-mj_0})$ in the case a) of (\ref{beta});
in the case b) for $\alpha>0$ we have $\mathfrak{S}_{{\cal
A}_{\xi_*},u,w}\underset{\mathfrak{Z}}{\lesssim}
j_0^{-\alpha}\rho(j_0)$; if $\alpha=0$ and $\rho\equiv 1$, then
$\mathfrak{S}_{{\cal
A}_{\xi_*},u,w}\underset{\mathfrak{Z}}{\lesssim} 1$.
\end{Lem}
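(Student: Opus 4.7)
The plan is to invoke Lemma \ref{hata} to pass from ${\cal A}_{\xi_*}$ to the regular tree $\hat{\cal A}$, then use (\ref{sauv}) on $\hat{\cal A}$ to reduce the estimate to evaluating $\sup_{{\cal D}\in{\cal J}'_{\hat\xi}}Q_{{\cal D}}S_{{\cal D}}$, which will be handled by an elementary Rayleigh-type inequality exploiting the radial structure of the weights on the regular tree.

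To apply Lemma \ref{hata}, I would take $\psi(j)=m\theta j-\log_2\Lambda(2^{-mj})$ as in (\ref{pjmt}), so that (\ref{vjj0}) coincides with (\ref{acvj}); conditions (\ref{ccc}) and (\ref{uuu}) follow from the slow variation of $\Lambda,\Psi_g,\Psi_v$ given by (\ref{yty}) and (\ref{psi_cond}), with $\hat C$ depending on $m$ and $\mathfrak{Z}$. A direct computation from (\ref{ujwj}) shows that the ratios in (\ref{sig_uj}) and (\ref{wjps}) are, up to slowly varying factors tending to $1$, $2^{m(\beta_g-d/p'-\theta/p)}$ and $2^{m(\beta_v-d/p+\theta/p)}$ respectively. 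By (\ref{muck}) the second exponent is strictly negative, so (\ref{wjps}) holds with $\sigma$ as small as desired once $m\ge m_*(\mathfrak{Z})$. In case b) of (\ref{beta}), the identity $\beta_g+\beta_v=d$ combined with (\ref{muck}) forces $\beta_g>d/p'+\theta/p$, making the first exponent strictly positive and yielding (\ref{sig_uj}) for $m$ large; the same strict inequality must be secured separately in case a) by exploiting the strict bound $\beta<d$.

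With the hypotheses in hand, Lemma \ref{hata} gives $\mathfrak{S}_{{\cal A}_{\xi_*},u,w}\lesssim\mathfrak{S}_{\hat{\cal A},\hat u,\hat w}$, and applying (\ref{sauv}) to the regular tree reduces the task to bounding $\sup_{{\cal D}\in{\cal J}'_{\hat\xi}}Q_{{\cal D}}S_{{\cal D}}$. For such ${\cal D}$ with $n_j$ maximal vertices at level $j-j_0$ of $\hat{\cal A}$, the elementary inequality $\sum_j a_j/\sum_j b_j\le\max_j a_j/b_j$ (valid for $a_j\ge 0$, $b_j>0$, not all $a_j$ zero) gives
$$
(Q_{{\cal D}}S_{{\cal D}})^p=\frac{\sum_j n_j w_j^p}{\sum_j n_j u_j^{-p}}\le\max_{j\ge j_0}(u_jw_j)^p.
$$
By (\ref{ujwj}) one has $u_jw_j=2^{mj(\beta-d)}\Psi(2^{-mj})$. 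In case a), $\beta<d$ and Lemma \ref{sum_lem} shows this quantity is decreasing in $j$ for $m$ sufficiently large, so the maximum equals $u_{j_0}w_{j_0}=2^{mj_0(\beta-d)}\Psi(2^{-mj_0})$; in case b), $u_jw_j=\Psi(2^{-mj})\underset{\mathfrak{Z}}{\asymp}j^{-\alpha}\rho(j)$ by (\ref{phi_g}) and (\ref{ll}), which is $\lesssim j_0^{-\alpha}\rho(j_0)$ when $\alpha>0$ and bounded by a constant when $\alpha=0$, $\rho\equiv 1$.

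The main technical obstacle is the verification of (\ref{sig_uj}) in case a) of (\ref{beta}), since $\beta<d$ alone does not force $\beta_g>d/p'+\theta/p$; one must propagate the geometric decay of the target estimate through the reduction scheme of Lemmas \ref{reduct}--\ref{hata} to compensate for the possibly insufficient growth of $u_j 2^{-\psi(j)/p}$.
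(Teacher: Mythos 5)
Your central idea — appealing to (\ref{sauv}) and then bounding $\sup_{{\cal D}}Q_{{\cal D}}S_{{\cal D}}$ by $\max_j u_jw_j$ via the mediant inequality
$\frac{\sum_j a_j}{\sum_j b_j}\le\max_j\frac{a_j}{b_j}$ — is correct and is a genuine shortcut compared with the paper's route, which after Lemma~\ref{hata} performs a symmetrization on the regular tree (the concavity argument around (\ref{sym})) to reduce to a one-dimensional discrete Hardy inequality and then invokes Theorem~\ref{hardy_diskr}. Your computation $(Q_{{\cal D}}S_{{\cal D}})^p=\frac{\sum_j n_jw_j^p}{\sum_j n_ju_j^{-p}}\le\max_j(u_jw_j)^p$ is tree-agnostic, so in fact the detour through $\hat{\cal A}$ (Lemmas~\ref{symmetr} and~\ref{hata}) is not needed at all: (\ref{sauv}) already gives $\mathfrak{S}_{{\cal A}_{\xi_*},u,w}\asymp\sup_{{\cal D}\in{\cal J}'_{\xi_*}}Q_{{\cal D}}S_{{\cal D}}$ on the irregular tree ${\cal A}_{\xi_*}$, and the mediant bound can be applied there directly. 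Your evaluation of $\max_j u_jw_j$ via Lemma~\ref{sum_lem} in both cases a) and b) is also correct.

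The gap you flag in your last paragraph is, however, real and must be filled: in case a) the hypotheses do not force $\beta_g>d/p'+\theta/p$, and without this (\ref{sig_uj}) fails, so Lemma~\ref{reduct} — and consequently (\ref{sdasbd}) and hence (\ref{sauv}) — are not available. The fix is considerably simpler than ``propagating the geometric decay through the reduction scheme''; it is a one-line monotonicity argument. Choose $\beta_{\tilde g}>d/p'+\theta/p$ with $\beta_{\tilde g}+\beta_v<d$ — possible because (\ref{muck}) with $p=q$ gives $\beta_v<(d-\theta)/p$, so $d-\beta_v>d/p'+\theta/p$ — and put $\tilde u(\xi)=u(\xi)\cdot 2^{(\beta_{\tilde g}-\beta_g)mj}$ for $\xi\in{\bf V}^{\cal A}_j(\xi_0)$. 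On ${\cal A}_{\xi_*}$ every level satisfies $j\ge j_0$, hence $u(\xi)\le 2^{mj_0(\beta_g-\beta_{\tilde g})}\tilde u(\xi)$, and by the trivial monotonicity of the constant in (\ref{dh}) with respect to $u$ one obtains $\mathfrak{S}_{{\cal A}_{\xi_*},u,w}\le 2^{mj_0(\beta_g-\beta_{\tilde g})}\mathfrak{S}_{{\cal A}_{\xi_*},\tilde u,w}$. Since $\tilde u$ now satisfies (\ref{sig_uj}), your argument applies to it, giving $\mathfrak{S}_{{\cal A}_{\xi_*},\tilde u,w}\lesssim 2^{mj_0(\beta_{\tilde g}+\beta_v-d)}\Psi(2^{-mj_0})$, and the prefactor $2^{mj_0(\beta_g-\beta_{\tilde g})}$ restores exactly $2^{mj_0(\beta-d)}\Psi(2^{-mj_0})$.
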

\begin{proof}
First suppose that
\begin{align}
\label{bgdp} \beta_g-\frac{d}{p'}-\frac{\theta}{p}>0.
\end{align}
We have
$$
u_j\cdot 2^{-\frac{\psi(j)}{p}}=2^{mj\left(\beta_g-\frac{d}{p'}
-\frac{\theta}{p}\right)}\cdot \Psi_g(2^{-mj})\Lambda^{\frac 1p}
(2^{-mj}),
$$
$$
w_j\cdot 2^{\frac{\psi(j)}{p}}=2^{mj\left(\beta_v-\frac{d}{p}
+\frac{\theta}{p}\right)}\cdot \Psi_v(2^{-mj})\Lambda^{-\frac 1p}
(2^{-mj}).
$$
From (\ref{muck}) and (\ref{bgdp}) it follows that (\ref{sig_uj})
and (\ref{wjps}) hold with  $\sigma
\underset{\mathfrak{Z}}{\lesssim} \lambda_*^{m}$,
$\lambda_*=\lambda_*(\mathfrak{Z})\in (0, \, 1)$. From
(\ref{acvj}) follows (\ref{vjj0}) with $C_*=C_*(a, \, d, \, c_0)$.
There exists $m_*$ such that $\sigma<\sigma_2(p, \, C_*)$ for any
$m\ge m_*$ (see Lemma \ref{hata}). Let the tree $(\hat {\cal A},
\, \hat \xi)$ satisfy (\ref{def_hata}) with $\psi_*$ defined by
(\ref{psi_st}), and let $\hat u(\xi)=u_j$, $\hat w(\xi)=w_j$ for
$\xi\in {\bf V}_{j-j_0}(\hat \xi_0)$. By Lemma \ref{hata},
$\mathfrak{S}_{{\cal A}_{\xi_*},u,w} \underset{\mathfrak{Z}}
{\lesssim} \mathfrak{S}_{\hat{\cal A},\hat u,\hat w}$.

The quantity $\mathfrak{S}_{\hat{\cal A},\hat u,\hat v}$ equals to
the minimal constant $C$ in
\begin{align}
\label{vg} \sum \limits_{\xi \in {\bf V}(\hat{\cal A})} \hat
w^p(\xi) \left( \sum \limits _{\hat \xi\le \xi'\le \xi}\hat
u(\xi')f^{1/p}(\xi')\right)^p \le C^p\sum \limits_{\xi \in {\bf
V}(\hat{\cal A}_{\hat{\xi}})} f(\xi), \;\; f:{\bf V}(\hat{\cal
A})\rightarrow \R_+.
\end{align}

Denote by ${\cal F}(f)$ the left-hand side of (\ref{vg}).

We claim that the function ${\cal F}$ is concave. Indeed, let
$\lambda\in [0, \, 1]$, $f_1$, $f_2:{\bf V}(\hat{\cal A} _{\hat
\xi})\rightarrow \R_+$. Applying the inverse Minkowski inequality
and the homogeneity property, we get
$$
\sum \limits _{\xi \in {\bf V}(\hat{\cal A})} \hat w^p(\xi)\left(
\sum \limits _{\hat \xi\le\xi'\le \xi} \hat u(\xi')
\left((1-\lambda) f_1(\xi')+ \lambda
f_2(\xi')\right)^{1/p}\right)^{p}\ge
$$
$$
\ge (1-\lambda)\sum \limits _{\xi \in {\bf V}(\hat{\cal A})} \hat
w^p(\xi)\left(\sum \limits _{\hat \xi\le \xi'\le \xi} \hat u(\xi')
f_1^{1/p}(\xi')\right)^p+$$$$+\lambda \sum \limits _{\xi \in {\bf
V}(\hat{\cal A})}\hat w^p(\xi)\left(\sum \limits _{\hat \xi\le
\xi'\le \xi} \hat u(\xi') f_2^{1/p}(\xi')\right)^p.
$$

Set $n_j={\rm card}\,{\bf V}_1^{\hat{\cal A}}(\xi)$, $\xi\in {\bf
V}_j^{\hat{\cal A}}(\hat \xi)$, $j\in \Z_+$. It follows from
(\ref{def_hata}) that this quantity does not depend on $\xi$.
Prove that
\begin{align}
\label{sym}
\begin{array}{c}
\sup \{{\cal F}(f):\, \|f\|_{l_1(\hat{\cal A})} \le 1\}=\\ =\sup
\{{\cal F}(f):\,\|f\|_{l_1(\hat{\cal A})} \le 1, \; \forall j\in
\Z_+, \;\forall \xi', \, \xi''\in {\bf V}_j(\hat \xi)
\;\;f(\xi')=f(\xi'')\}
\end{array}
\end{align}
(see the notation (\ref{flpg})).

Construct $f_{k;i_1, \, \dots, \, i_k, \, i_{k+1}}$ by induction
on $k\in \{0, \, 1, \, \dots, \, N-j_0\}$. Set $f_{0}=f(\hat
\xi)$. Let $0\le k\le N-j_0-1$, $f_{k;i_1, \, \dots, \,
i_k}=f(\xi)$ for some $\xi\in {\bf V}_k^{\hat{\cal A}}(\hat \xi)$.
Then we define $f_{k+1;i_1, \, \dots, \, i_k, \, i_{k+1}}$ for
$1\le i_{k+1}\le n_k$ so that
$$
\{f_{k+1;i_1, \, \dots, \, i_k, \,i_{k+1}}\}_{i_{k+1}=1}^{n_k}=
\{f(\xi'):\; \xi'\in {\bf V}^{\hat{\cal A}}_1(\xi)\}.
$$

Denote by $\mathbb{S}_j$ the set of permutations of $j$ elements.

For $0\le t\le N-j_0-1$, $\sigma \in \mathbb{S}_{n_t}$ we set
$$
(f^{t, \sigma})_{k;i_1, \, \dots, \, i_k}= \left\{
\begin{array}{l} f_{k;i_1, \, \dots, \, i_k}, \, \text{ for }k\le t,
\\ f_{k;i_1, \, \dots , \, \sigma(i_{t+1}), \,
\dots, \, i_k}, \, \text{ for }k> t, \end{array} \right.
$$
$$
\phi_{(t)}(f)= \frac{1}{{\rm card}\, \mathbb{S}_{n_t}}\sum \limits
_{\sigma \in \mathbb{S}_{n_t}}f^{t, \sigma}.
$$
Since the function ${\cal F}$ is concave and ${\cal
F}(f^{t,\sigma})= {\cal F}(f)$, we get
$$
{\cal F}(f)\le{\cal F}(\phi_{(0)}(f))\le {\cal
F}(\phi_{(1)}\phi_{(0)}(f))\le \dots \le {\cal
F}(\phi_{(N-j_0-1)}\dots \phi_{(0)}(f)).
$$
It remains to observe that $\left(\phi_{(N-j_0-1)}\dots
\phi_{(0)}(f)\right)(\xi')= \left(\phi_{(N-j_0-1)}\dots
\phi_{(0)}(f)\right)(\xi'')$ for any $\xi'$, $\xi''\in {\bf
V}_k(\hat \xi)$, $0\le k\le N-j_0$.

Thus, (\ref{sym}) holds. Hence, it suffices to find the minimal
constant $C$ in (\ref{vg}) for the family of functions $f$ such
that $f_{{\bf V}_k(\hat \xi)}=f_k$, $0\le k\le N-j_0$. Set
$m_k=n_0\dots n_k$. From (\ref{def_hata}) it follows that
$n_k=2^{\psi_*(j_0+k+1)- \psi_*(j_0+k)}$ and
\begin{align}
\label{mj_as} m_k=2^{\psi_*(j_0+k+1)-\psi_*(j_0)}
\stackrel{(\ref{2p_j}),(\ref{pjmt})}{\asymp} 2^{\theta
mk}\frac{\Lambda(2^{-mj_0})}{\Lambda(2^{-m(j_0+k+1)})}.
\end{align}
Let $x_k=\left(m_{k-1} f_k\right)^{1/p}$, $m_{-1}=1$. Then it
follows from the definition of $\hat u$ and $\hat w$ that
(\ref{vg}) can be written as
\begin{align}
\label{vg1} \left(\sum \limits _{k=0}^{N-j_0} m_{k-1}
w_{k+j_0}^p\left(\sum \limits _{l=0}^k u_{l+j_0}m_{l-1}^{-\frac
1p} x_l\right)^p\right)^{1/p} \le C\left(\sum \limits
_{k=0}^{N-j_0} x_k^p\right)^{1/p}.
\end{align}
Applying Theorem \ref{hardy_diskr}, we get
$$
C\underset{p}{\asymp} \sup _{0\le k\le N-j_0} \left(\sum \limits
_{l=k}^{N-j_0} m_{l-1}w_{l+j_0}^p\right)^{\frac 1p} \left(\sum
\limits _{l=0}^k u_{l+j_0}^{p'}
m_{l-1}^{-\frac{p'}{p}}\right)^{\frac{1}{p'}}.
$$
Apply Lemma \ref{sum_lem}, taking into account Remark \ref{ste}.
From (\ref{muck}), (\ref{ujwj}) and (\ref{mj_as}) it follows that
$\sum \limits _{l=k}^{N-j_0} m_{l-1}w_{l+j_0}^p
\underset{\mathfrak{Z}}{\asymp} m_{k-1}w_{k+j_0}^p$. The condition
$\beta_g>\frac{d}{p'}+\frac{\theta}{p}$  yields the inequality
$\sum \limits_{l=0}^k u_{l+j_0}^{p'}m_{l-1}^{-\frac{p'}{p}}
\underset{\mathfrak{Z}}{\asymp} m_{k-1}^{-\frac{p'}{p}}
u_{k+j_0}^{p'}$. Therefore,
$$
C\underset{\mathfrak{Z}}{\asymp} \sup _{0\le k\le N-j_0}
u_{k+j_0}w_{k+j_0} \stackrel{(\ref{ujwj})}{=} \sup _{j_0\le t\le
N} 2^{mt(\beta-d)}\Psi(2^{-mt}) =:M.
$$
In the case (\ref{beta}), a), we have
$M\underset{\mathfrak{Z}}{\asymp} 2^{mj_0(\beta-d)}
\Psi(2^{-mj_0})$. In the case (\ref{beta}), b) for $\alpha>0$ we
get $M\underset{\mathfrak{Z},m}{\asymp} j_0^{-\alpha}\rho(j_0)$.
If $\alpha=0$ and $\rho\equiv 1$, then $M=1$.

Let, now, $\beta_g-\frac{d}{p'}-\frac{\theta}{p}\le 0$. Since
$\beta_v<\frac{d-\theta}{p}$, there exists $\beta_{\tilde
g}>\frac{d}{p'}+\frac{\theta}{p}$ such that $\beta_{\tilde
g}+\beta_v<d$. Set $\tilde u(\xi)=u(\xi)\cdot 2^{(\beta_{\tilde
g}-\beta_g)mj}$, $\xi\in {\bf V}_j^{{\cal A}}(\xi_0)$. Then
$$\mathfrak{S}_{{\cal A}_{\xi_*},
u,w}\le \mathfrak{S}_{{\cal A}_{\xi_*}, \tilde u,w}\cdot
2^{mj_0(\beta_g-\beta_{\tilde g})}
\underset{\mathfrak{Z}}{\lesssim}
2^{mj_0(\beta-d)}\Psi(2^{-mj_0}).$$ This completes the proof.
\end{proof}
\section{The discrete Hardy-type inequality on the tree: case $p\ne q$}
Let the tree ${\cal A}$ be such as in the previous section, and
let
\begin{align}
\label{ujwj11}\begin{array}{c} u(\xi)=\varphi_g(2^{-mj})\cdot
2^{-\frac{mdj}{p'}}=
2^{mj\left(\beta_g-\frac{d}{p'}\right)}\Psi_g(2^{-mj}),
\\ w(\xi)=\varphi_v(2^{-mj})\cdot 2^{-\frac{mdj}{q}}=
2^{mj\left(\beta_v-\frac{d}{q}\right)}\Psi_v(2^{-mj}), \;\; \xi\in
{\bf V}_j(\xi_0).
\end{array}
\end{align}
Let $\xi_*\in {\bf V}_{j_0}(\xi_0)$. Denote by
$\mathfrak{S}_{{\cal A}_{\xi_*},u,v}^{p,q}$ the minimal constant
$C$ in the inequality
\begin{align}
\label{hardy_pq} \left(\sum \limits _{\xi\in {\bf V}({\cal
A}_{\xi_*})} w^q(\xi)\left(\sum \limits _{\xi_*\le \xi'\le \xi}
u(\xi')f(\xi') \right)^q\right)^{\frac 1q} \le C\|f\|_{l_p({\cal
A}_{\xi_*})}.
\end{align}

\begin{Lem}
\label{pgq} Let $p>q$. Then there exists $m_*=m_*(\mathfrak{Z})\in
\N$ such that for any $m\ge m_*$
$$\mathfrak{S}_{{\cal A}_{\xi_*},u,w}^{p,q}
\underset{\mathfrak{Z}}{\lesssim} 2^{mj_0\left(\beta-d- \frac
dq+\frac dp\right)}\Psi(2^{-mj_0})$$ for the case a) in
(\ref{beta}); in the case b), for $\alpha>\left(\frac 1q-\frac
1p\right)(1-\gamma)$
$$
\mathfrak{S}_{{\cal A}_{\xi_*},u,w}^{p,q}\underset{\mathfrak{Z}}
{\lesssim} 2^{-\theta\left(\frac 1q-\frac
1p\right)mj_0}j_0^{-\alpha+\frac 1q-\frac 1p}\rho(j_0).
$$
\end{Lem}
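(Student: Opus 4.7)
Since $p>q$, set $\frac{1}{r}=\frac{1}{q}-\frac{1}{p}$. The strategy is to apply H\"older's inequality to reduce to the $p=q$ case already handled in Lemma \ref{p_eq_q_hardy}. Factor $w=w^{(1)}w^{(2)}$ so that $w^{(2)}$ retains the structural form (\ref{ujwj11}) with a modified exponent $\beta_v^{(2)}=\beta_v-\eta$ and a modified slowly-varying factor $\Psi_v^{(2)}$; then $w^{(1)}(\xi)=2^{mj(\eta-d/r)}\Psi_v^{(1)}(2^{-mj})$ for $\xi\in{\bf V}_j^{\mathcal{A}}(\xi_0)$, with $\Psi_v^{(1)}\Psi_v^{(2)}=\Psi_v$. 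By H\"older with exponents $r/q$ and $p/q$ (whose reciprocals sum to one),
$$\|w\cdot Tf\|_{l_q(\mathcal{A}_{\xi_*})}\le \|w^{(1)}\|_{l_r(\mathcal{A}_{\xi_*})}\cdot\|w^{(2)}\cdot Tf\|_{l_p(\mathcal{A}_{\xi_*})},$$
where $Tf(\xi)=\sum_{\xi_*\le\xi'\le\xi}u(\xi')f(\xi')$; hence $\mathfrak{S}^{p,q}_{\mathcal{A}_{\xi_*},u,w}\le \|w^{(1)}\|_{l_r(\mathcal{A}_{\xi_*})}\cdot\mathfrak{S}^{p,p}_{\mathcal{A}_{\xi_*},u,w^{(2)}}$. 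The plan is then to estimate $\|w^{(1)}\|_{l_r}$ using the vertex-count bound (\ref{acvj}) together with Lemma \ref{sum_lem}, bound $\mathfrak{S}^{p,p}_{u,w^{(2)}}$ by Lemma \ref{p_eq_q_hardy} for sufficiently large $m\ge m_*$, and choose $\eta$ together with the split $\Psi_v=\Psi_v^{(1)}\Psi_v^{(2)}$ to match the claimed bounds.

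In case (a) of (\ref{beta}), the hypothesis $\beta<d+(d-\theta)(1/q-1/p)$ permits a choice $\max(0,\beta-d)<\eta<(d-\theta)(1/q-1/p)$; take $\Psi_v^{(1)}=\Psi_v$ and $\Psi_v^{(2)}\equiv 1$. The $l_r$-series then has strictly negative exponential rate $m(\theta-d+r\eta)$, so Lemma \ref{sum_lem} dominates it by its $j=j_0$ term, yielding $\|w^{(1)}\|_{l_r}\lesssim 2^{mj_0(\eta-d/r)}\Psi_v(2^{-mj_0})$. Simultaneously $\beta_g+\beta_v^{(2)}=\beta-\eta<d$, so Lemma \ref{p_eq_q_hardy}(a) gives $\mathfrak{S}^{p,p}_{u,w^{(2)}}\lesssim 2^{mj_0(\beta-\eta-d)}\Psi_g(2^{-mj_0})$. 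Multiplying, the $\eta$-terms cancel and, using $d/r=d(1/q-1/p)$, produce the claimed $2^{mj_0(\beta-d-d/q+d/p)}\Psi(2^{-mj_0})$.

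The main obstacle is case (b), where the only viable choice is the boundary value $\eta=(d-\theta)(1/q-1/p)$: here the $l_r$-series has exactly zero exponential rate, and the reduced $p=p$ problem sits at the critical case (b) of Lemma \ref{p_eq_q_hardy}. Substituting $\Lambda(2^{-mj})\asymp(mj)^{\gamma}\tau(mj)$ and $\Psi_v^{(1)}(t)\asymp|\log t|^{-\alpha_v^{(1)}}\rho_v^{(1)}(|\log t|)$, the series for $\|w^{(1)}\|_{l_r}^r$ reduces (up to benign prefactors) to $2^{-mj_0\theta}\tau(mj_0)j_0^{\gamma}\sum_{j\ge j_0}j^{-\gamma-\alpha_v^{(1)}r}\rho_v^{(1)}(j)^r/\tau(j)$, whose convergence via the third estimate of Lemma \ref{sum_lem} demands $\alpha_v^{(1)}>(1-\gamma)/r=(1-\gamma)(1/q-1/p)$. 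The key observation is that the hypothesis $\alpha>(1-\gamma)(1/q-1/p)$ makes the interval $((1-\gamma)(1/q-1/p),\alpha)$ nonempty; for any $\alpha_v^{(1)}$ in it --- possibly with $\alpha_v^{(1)}>\alpha_v$, hence $\alpha_v^{(2)}:=\alpha_v-\alpha_v^{(1)}<0$, which is harmless since (\ref{ll}) is stable under taking ratios --- I obtain simultaneously $\|w^{(1)}\|_{l_r}\lesssim 2^{-\theta(1/q-1/p)mj_0}j_0^{1/q-1/p-\alpha_v^{(1)}}\rho_v^{(1)}(j_0)$ and, by Lemma \ref{p_eq_q_hardy}(b) applied with $\alpha^{(2)}=\alpha-\alpha_v^{(1)}>0$, $\mathfrak{S}^{p,p}_{u,w^{(2)}}\lesssim j_0^{-(\alpha-\alpha_v^{(1)})}\rho_g(j_0)\rho_v^{(2)}(j_0)$. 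All $\alpha_v^{(1)}$-dependent factors cancel upon multiplication, yielding the desired $2^{-\theta(1/q-1/p)mj_0}j_0^{-\alpha+1/q-1/p}\rho(j_0)$.
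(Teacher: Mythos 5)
Your approach is valid and takes a genuinely different route from the paper. The paper factors both $u=u_1u_2$ and $w=w_1w_2$, applies H\"older twice (once to the inner Hardy sum, once to the outer), and reduces to the $(\infty,q)$ case (handled directly at the start) and the $(q,q)$ case of Lemma \ref{p_eq_q_hardy}. You factor only $w=w^{(1)}w^{(2)}$, apply a single H\"older to the outer $l_q$-sum with exponents $r/q$ and $p/q$, and reduce to the $(p,p)$ case of Lemma \ref{p_eq_q_hardy}; the residual $\|w^{(1)}\|_{l_r(\mathcal{A}_{\xi_*})}$ is then estimated by the vertex-count bound (\ref{acvj}) and the geometric/polynomial sum estimates of Lemma \ref{sum_lem}. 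This avoids a separate treatment of $p=\infty$ and keeps the bookkeeping lighter; both arguments reach the claimed bound and the final cancellations you describe do check out.

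One omission to flag. Applying Lemma \ref{p_eq_q_hardy} with $(u,w^{(2)})$ and $p$ in place of $q$ requires the Muckenhoupt-type hypothesis (\ref{muck}) for the new exponent: $\beta_v^{(2)}=\beta_v-\eta$ must satisfy $\beta_v-\eta<(d-\theta)/p$, equivalently $\eta>\beta_v-(d-\theta)/p$. In case b) this is automatic, since $\eta=(d-\theta)\left(\frac1q-\frac1p\right)$ turns it into exactly the original (\ref{muck}). In case a), however, your stated interval $\max(0,\beta-d)<\eta<(d-\theta)\left(\frac1q-\frac1p\right)$ does not by itself guarantee this; you should enlarge the lower bound to $\max\bigl(0,\;\beta-d,\;\beta_v-(d-\theta)/p\bigr)$. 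Fortunately this threshold is still strictly below $(d-\theta)\left(\frac1q-\frac1p\right)$ precisely because (\ref{muck}) gives $\beta_v<(d-\theta)/q$, so a valid $\eta$ exists and the argument goes through after this small repair.
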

\begin{proof}
First consider the case $p=\infty$. Let $\beta_g>d$. Then
$$
\left(\sum \limits _{\xi\in {\bf V}({\cal A}_{\xi_*})}
w^q(\xi)\left(\sum \limits _{\xi_*\le\xi'\le \xi} u(\xi')f(\xi')
\right)^q\right)^{\frac 1q}\le $$$$\le\left(\sum \limits _{\xi\in
{\bf V}({\cal A}_{\xi_*})}w^q(\xi)\left( \sum \limits _{\xi_*\le
\xi'\le \xi} u(\xi')\right)^q\right) ^{\frac
1q}\|f\|_{l_\infty({\cal A}_{\xi_*})}\stackrel
{(\ref{acvj}),(\ref{pjmt}),(\ref{ujwj11})}{\underset{a,d,c_0,m}{\lesssim}}
$$
$$
\lesssim\left(\sum \limits _{j=j_0}^N 2^{mj\left(\beta_vq-d
\right)}\Psi_v^q(2^{-mj})\cdot 2^{m\theta(j-j_0)}
\frac{\Lambda(2^{-mj_0})}{\Lambda(2^{-mj})} \left(\sum \limits
_{l=j_0}^j 2^{ml\left(\beta_g
-d\right)}\Psi_g(2^{-ml})\right)^q\right)^{1/q}
\|f\|_{l_\infty({\cal A}_{\xi_*})}.
$$
This together with the condition $\beta_g>d$, Lemma \ref{sum_lem}
and Remark \ref{ste} yield
$$
\mathfrak{S}_{{\cal A}_{\xi_*},u,w}^{\infty,q}
\underset{\mathfrak{Z}}{\lesssim} \left(\sum \limits _{j=j_0}^N
2^{mj(\beta q-d-dq+\theta)}\Psi^q(2^{-mj})\cdot 2^{-\theta
mj_0}\frac{\Lambda(2^{-m j_0})} {\Lambda(2^{-mj})}\right)^{1/q}.
$$
In the case (\ref{beta}), a), the right-hand side can be estimated
from above up to a multiplicative constant by
$2^{mj_0\left(\beta-d-\frac dq\right)}\Psi (2^{-mj_0})$; in the
case (\ref{beta}), b) it is estimated by
$$
\left(\sum \limits _{j=j_0}^N (mj)^{-\alpha q}\rho^q(mj) \cdot
2^{-\theta mj_0}\frac{j_0^\gamma \tau(mj_0)}{j^\gamma
\tau(mj)}\right)^{\frac 1q} \underset{\mathfrak{Z},m}{\lesssim}
2^{-\frac{\theta mj_0}{q}}j_0^{-\alpha+\frac 1q}\rho(j_0)
$$
(here we use Lemma \ref{sum_lem} and Remark \ref{ste} again). If
$\beta_g\le d$, then we choose $\beta_{\tilde g}>d$ so that
$\beta_{\tilde g}+\beta_v<d+\frac{d-\theta}{q}$ (it is possible by
(\ref{muck})), and we set $\tilde u(\xi)=u(\xi)\cdot
2^{mj(\beta_{\tilde g}-\beta_g)}$, $\xi\in {\bf V}_j^{{\cal
A}}(\xi_0)$. Then
$$\mathfrak{S}_{{\cal A}_{\xi_*},u,w}^{\infty,q}
\underset{\mathfrak{Z}}{\lesssim} \mathfrak{S}_{{\cal
A}_{\xi_*},\tilde u,w}^{\infty,q}\cdot
2^{mj_0(\beta_g-\beta_{\tilde g})}\underset{\mathfrak{Z}}
{\lesssim} 2^{mj_0\left(\beta-d-\frac dq\right)}\Psi
(2^{-mj_0}).$$

Let, now, $q<p<\infty$. Let $\beta_{g,1}+\beta_{g,2}=\beta_g$,
$\beta_{v,1}+\beta_{v,2}=\beta_v$,
$$
u_1(\xi)=2^{mj\left(\beta_{g,1}-\frac{d(p-q)}{p}\right)}
\Psi_g(2^{-mj}), \;\; u_2(\xi)=2^{mj\left(
\beta_{g,2}-\frac{d(q-1)}{p}\right)},
$$
$$
w_1(\xi)=2^{mj\left(\beta_{v,1}-\frac{d(p-q)}{pq}\right)}
\Psi_v(2^{-mj}), \;\; w_2(\xi)=2^{mj\left(
\beta_{v,2}-\frac{d}{p}\right)}, \;\; \xi\in {\bf V}_j(\xi_0).
$$
Then $u_1(\xi)u_2(\xi)=u(\xi)$, $w_1(\xi)w_2(\xi)=w(\xi)$.
Applying the H\"{o}lder inequality, we get
$$
\left(\sum \limits _{\xi \in {\bf V}({\cal A}_{\xi_*})}
w_1^q(\xi)w_2^q(\xi)\left(\sum \limits _{\xi_*\le\xi'\le \xi}
u_1(\xi')u_2(\xi')f(\xi')\right)^q\right)^{1/q}\le
$$
$$
\le \left(\sum \limits _{\xi \in {\bf V}({\cal A}_{\xi_*})}
w_1^q(\xi)w_2^q(\xi)\left(\sum \limits _{\xi_*\le\xi'\le \xi}
u_1^{\frac{p}{p-q}}(\xi')\right)^{q\left(1-\frac qp\right)}
\left(\sum \limits _{\xi_*\le \xi'\le \xi}u_2^{\frac
pq}(\xi')f^{\frac pq}(\xi') \right) ^{\frac{q^2}{p}}
\right)^{1/q}\le
$$
$$
\le \left(\sum \limits _{\xi \in {\bf V}({\cal A}_{\xi_*})}
w_1^{\frac{pq}{p-q}}(\xi)\left(\sum \limits _{\xi_*\le\xi'\le \xi}
u_1^{\frac{p}{p-q}}(\xi')\right)^q\right)^{\frac 1q-\frac 1p}
\left(\sum \limits _{\xi \in {\bf V}({\cal A}_{\xi_*})}
w_2^p(\xi)\left(\sum \limits _{\xi_*\le\xi'\le \xi} u_2^{\frac pq}
(\xi')f^{\frac pq}(\xi')\right)^q\right)^{\frac 1p}=
$$
$$
=\left(\sum \limits _{\xi \in {\bf V}({\cal A}_{\xi_*})} \tilde
w_1^q(\xi)\left(\sum \limits _{\xi_*\le\xi'\le \xi} \tilde
u_1(\xi')\right)^q\right)^{\frac 1q\left(1-\frac qp\right)}
\left(\sum \limits _{\xi \in {\bf V}({\cal A}_{\xi_*})} \tilde
w_2^q(\xi)\left(\sum \limits _{\xi_*\le\xi'\le \xi} \tilde
u_2(\xi')f_2(\xi')\right)^q\right)^{\frac 1q\cdot \frac qp},
$$
with $\tilde u_1(\xi)=u_1^{\frac{p}{p-q}}(\xi)$, $\tilde
w_1(\xi)=w_1^{\frac{p}{p-q}}(\xi)$, $\tilde u_2(\xi)=u_2^{\frac
pq}(\xi)$, $\tilde w_2(\xi)= w_2^{\frac pq}(\xi)$,
$f_2(\xi)=f^{\frac pq}(\xi)$ (then $f_2\in l_q({\cal
A}_{\xi_*})$).

Check that we can apply to each of multipliers the Hardy-type
inequality with $(p_1, \, q_1)=(\infty, \, q)$ and $(p_2, \,
q_2)=(q, \, q)$. Indeed, for $\xi\in {\bf V}_j(\xi_0)$ we have
$$\tilde u_1(\xi)=2^{mj\left(\beta_{g,1}\frac{p}{p-q}
-d\right)}\Psi_g^{\frac{p}{p-q}}(2^{-mj}), \;\; \tilde
u_2(\xi)=2^{mj\left(\beta_{g,2}\frac{p}{q}-\frac{d}{q'}\right)},$$
$$\tilde w_1(\xi)=2^{mj\left(\beta_{v,1}\frac{p}{p-q}-\frac dq\right)}
\Psi_v^{\frac{p}{p-q}}(2^{-mj}), \;\;\tilde
w_2(\xi)=2^{mj\left(\beta_{v,2}\frac pq -\frac dq\right)}.$$ By
Remark \ref{ste}, since the functions $\Psi_g$ and $\Psi_v$
satisfy (\ref{psi_cond}) ($\rho_g$ and $\rho_v$ satisfy
(\ref{ll}), respectively), we observe that their powers satisfy
the similar conditions. First choose $\beta_{v,1}$ and
$\beta_{v,2}$ so that
$$
\beta_{v,1}<(d-\theta)\left(\frac 1q-\frac 1p\right),
\quad\beta_{v,2}< \frac{d-\theta}{p}, \quad
\beta_{v,1}+\beta_{v,2}=\beta_v
$$
hold (it is possible, since $\beta_v<\frac{d-\theta}{q}$). Then we
choose $\beta_{g,1}$, $\beta_{g,2}$. In the case (\ref{beta}), a)
require
$$
\beta_{g,1}+\beta_{v,1}<\left(1-\frac qp\right)\left(d+ \frac
dq-\frac{\theta}{q}\right),\quad \beta_{g,2}+\beta_{v,2}<\frac
{qd}{p},\quad \beta_{g,1}+\beta_{g,2}=\beta_g.
$$
It is possible, since $\beta_g+\beta_v<d+\frac dq-\frac dp-
\theta\left(\frac 1q-\frac 1p\right)=\left(1-\frac
qp\right)\left(d+ \frac dq-\frac{\theta}{q}\right)+\frac {qd}{p}$.
In the case (\ref{beta}), b) we require
$$
\beta_{g,1}+\beta_{v,1}=\left(1-\frac qp\right)\left(d+ \frac
dq-\frac{\theta}{q}\right),\;\; \beta_{g,2}+\beta_{v,2}=\frac
{qd}{p}.
$$
The condition $\alpha\frac{p}{p-q}>\frac{1-\gamma}{q}$ holds by
(\ref{g0ag}).

Also observe that $\|f_2\|_{l_q({\cal A}_{\xi_*})}^{q/p}
=\|f\|_{l_p({\cal A}_{\xi_*})}$.

Thus, in the case (\ref{beta}), a)
$$
\mathfrak{S}_{{\cal A}_{\xi_*},u,w}^{p,q} \underset
{\mathfrak{Z}}{\lesssim} \left[2^{mj_0\left(
(\beta_{g,1}+\beta_{v,1})\frac{p}{p-q}-d-\frac dq\right)}
\Psi^{\frac{p}{p-q}}(2^{-mj_0})\right]^{1-\frac
qp}\times$$$$\times
\left[2^{mj_0\left((\beta_{g,2}+\beta_{v,2})\frac pq-d\right)}
\right]^{\frac qp}=2^{mj_0\left( \beta -d-\frac dq+\frac
dp\right)}\Psi(2^{-mj_0}),
$$
as well as in the case (\ref{beta}), b)
$$
\mathfrak{S}_{{\cal A}_{\xi_*},u,w}^{p,q}
\underset{\mathfrak{Z}}{\lesssim} \left(2^{-\frac{m\theta
j_0}{q}}j_0^{-\frac{\alpha p}{p-q}+\frac
1q}\rho^{\frac{p}{p-q}}(j_0)\right) ^{1-\frac qp}=
2^{-\theta\left(\frac 1q-\frac 1p\right)mj_0} j_0^{-\alpha+\frac
1q-\frac 1p}\rho(j_0).
$$
This completes the proof.
\end{proof}
\begin{Lem}
\label{plq} Let $p<q$. Then there exists $m_*=m_*(\mathfrak{Z})\in
\N$ such that for any $m\ge m_*$
$$\mathfrak{S}_{{\cal A}_{\xi_*},u,w}^{p,q}
\underset{\mathfrak{Z}}{\lesssim} 2^{mj_0\left(\beta-d- \frac
dq+\frac dp\right)}\Psi(2^{-mj_0})$$ in the case a) of
(\ref{beta}); in the case b), if $\alpha>0$, then
$$\mathfrak{S}_{{\cal A}_{\xi_*},u,w}^{p,q}\underset{\mathfrak{Z}}
{\lesssim} j_0^{-\alpha}\rho(j_0).$$
\end{Lem}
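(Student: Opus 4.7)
My plan is to adapt the H\"{o}lder splitting of Lemma \ref{pgq} to the case $p<q$, but with the inner H\"{o}lder replaced by one with the conjugate exponents $q'$, $q$ (both $>1$ since $q>p>1$). I would write $u=u_1u_2$ and $w=w_1w_2$ with
$$u_i(\xi)=2^{mj(\beta_{g,i}-d/p')}\Psi_{g,i}(2^{-mj}),\qquad w_i(\xi)=2^{mj(\beta_{v,i}-d/q)}\Psi_{v,i}(2^{-mj}),\quad \xi\in{\bf V}_j(\xi_0),$$
where $\beta_{g,1}+\beta_{g,2}=\beta_g$, $\beta_{v,1}+\beta_{v,2}=\beta_v$, $\Psi_{g,1}\Psi_{g,2}=\Psi_g$, $\Psi_{v,1}\Psi_{v,2}=\Psi_v$. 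H\"{o}lder applied to the inner sum, together with the trivial $(\infty,1)$-H\"{o}lder on the outer $\xi$-sum using the splitting $w^q=w_1^qw_2^q$, an interchange of summations, and the embedding $\|\cdot\|_{l_q}\le\|\cdot\|_{l_p}$ (valid since $p\le q$), yields
$$\mathfrak{S}^{p,q}_{{\cal A}_{\xi_*},u,w}\lesssim\sup_\xi\Bigl[w_1(\xi)\Bigl(\sum_{\xi'\le\xi}u_1^{q'}\Bigr)^{1/q'}\Bigr]\cdot\sup_\xi\bigl[W_2(\xi)u_2(\xi)\bigr],\quad W_2(\xi)^q:=\sum_{\xi''\ge\xi}w_2(\xi'')^q.$$

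Next, Lemma \ref{sum_lem} is applied twice. The path sum $\sum_{\xi'\le\xi}u_1(\xi')^{q'}$ behaves as $u_1(\xi)^{q'}$ provided $\beta_{g,1}>d/p'$; and using the tree-cardinality bound (\ref{acvj}) together with Remark \ref{ste}, the tree sum $W_2(\xi)^q$ behaves as $w_2(\xi)^q$ provided the Muckenhoupt-type condition $\beta_{v,2}<(d-\theta)/q$ holds. Under these two auxiliary conditions both suprema collapse to the single-variable suprema $\sup_j 2^{mj(\beta_{g,i}+\beta_{v,i}-d/p'-d/q)}\Psi_{g,i}\Psi_{v,i}(2^{-mj})$ for $i=1,2$. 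I would take $\beta_{v,1}=0$, $\beta_{v,2}=\beta_v$, and $\beta_{g,1}$ just above $d/p'$; in case a) of (\ref{beta}), the hypothesis $\beta<d+d/q-d/p=d/q+d/p'$ leaves room for such a choice so that both $j$-exponents are strictly negative, and by Lemma \ref{sum_lem} each supremum is attained at $j=j_0$; the product yields $2^{mj_0(\beta-2(d/q+d/p'))}\Psi(2^{-mj_0})$, which is dominated by $2^{mj_0(\beta-d-d/q+d/p)}\Psi(2^{-mj_0})$ since $d+d/q-d/p>0$. In case b), $\beta=d/q+d/p'$, so the sum of $j$-exponents equals $-(d/q+d/p')<0$ and the logarithmic prefactor $\Psi(2^{-mj_0})\asymp j_0^{-\alpha}\rho(j_0)$ directly delivers the stated bound.

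The main obstacle is the feasibility of the splitting: if $\beta_g\le d/p'$, the condition $\beta_{g,1}>d/p'$ cannot be achieved with $\beta_{g,1}\le\beta_g$. I would resolve this exactly as at the end of the proof of Lemma \ref{p_eq_q_hardy}, by a weight-shift: choose an artificial $\tilde\beta_g>d/p'$ such that $\tilde\beta_g+\beta_v$ is still in the admissible range of (\ref{beta}) (nonempty because the Muckenhoupt condition (\ref{muck}) ensures $\beta_v<(d-\theta)/q$), set $\tilde u(\xi)=u(\xi)\cdot 2^{mj(\tilde\beta_g-\beta_g)}$, run the preceding argument with $\tilde u$, and absorb the factor $2^{mj_0(\beta_g-\tilde\beta_g)}$ into the final bound. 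Beyond this bookkeeping, the delicate point is matching up the logarithmic exponents $\alpha_{g,i},\alpha_{v,i}$ in case b), which is handled in a routine way using the multiplicativity of the log factors and Remark \ref{ste}.
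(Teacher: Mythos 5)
Your outer strategy — pull a supremum over $\xi$ out of the $q$-sum, interchange summation, and finish with $\|\cdot\|_{l_q}\le\|\cdot\|_{l_p}$ — is a valid scheme and is genuinely different from the paper's proof, which instead sets $\lambda=\frac1p-\frac1q$, uses H\"older to peel off a power of $\max_\xi\sum_{\xi'\le\xi}f^p$, and reduces to the diagonal $(p_1,p_1)$ case of Lemma~\ref{p_eq_q_hardy}. However, there is a concrete error in your factorization: with $u_i(\xi)=2^{mj(\beta_{g,i}-d/p')}\Psi_{g,i}(2^{-mj})$ and $\beta_{g,1}+\beta_{g,2}=\beta_g$, $\Psi_{g,1}\Psi_{g,2}=\Psi_g$, one gets
$$u_1(\xi)u_2(\xi)=2^{mj(\beta_g-2d/p')}\Psi_g(2^{-mj})=u(\xi)\cdot 2^{-mjd/p'}\neq u(\xi),$$
and similarly $w_1w_2=w\cdot 2^{-mjd/q}\neq w$. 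The $d/p'$ (resp.\ $d/q$) must be distributed between the two factors rather than repeated in both; compare the split in Lemma~\ref{pgq}, where the paper explicitly verifies $u_1u_2=u$ and $w_1w_2=w$. Because of this, the quantity you compute, $2^{mj_0(\beta-2(d/q+d/p'))}\Psi(2^{-mj_0})$, bounds $\mathfrak{S}^{p,q}_{{\cal A}_{\xi_*},u_1u_2,w_1w_2}$, not $\mathfrak{S}^{p,q}_{{\cal A}_{\xi_*},u,w}$. The observation that it is dominated by the target $2^{mj_0(\beta-d-d/q+d/p)}\Psi(2^{-mj_0})$ therefore does not close the proof. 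In fact the mistake is visible already on a chain (branching-free tree) with $\theta=0$: there the sharp constant is exactly $\asymp 2^{mj_0(\beta-d/p'-d/q)}$, and your claimed bound is strictly smaller, which is impossible.

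If you correct the split to $u_i(\xi)=2^{mja_i}\Psi_{g,i}(2^{-mj})$, $w_i(\xi)=2^{mjb_i}\Psi_{v,i}(2^{-mj})$ with $a_1+a_2=\beta_g-d/p'$ and $b_1+b_2=\beta_v-d/q$, the scheme survives: the path sum requires $a_1>0$; the tree sum with the cardinality bound (\ref{acvj}) requires $b_2<-\theta/q$ (so the exponent $m(\theta+qb_2)$ is negative, and Remark~\ref{ste} handles the $\Lambda,\Psi$ factors); and the two suprema are attained at $j_0$ if $a_1+b_1<0$ and $a_2+b_2<0$. The product is then $2^{mj_0(a_1+a_2+b_1+b_2)}\Psi(2^{-mj_0})=2^{mj_0(\beta-d/p'-d/q)}\Psi(2^{-mj_0})$, matching the lemma \emph{exactly}, not merely up to a surplus power. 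In case a) these inequalities can all be made strict since $\beta<d/p'+d/q$, and no weight shift is needed (nothing forces $a_2\ge0$). In case b) one is forced into $a_1+b_1=a_2+b_2=0$, the power factors vanish, and each supremum is governed \emph{solely} by the logarithmic part; you then must split $\alpha=\alpha_g+\alpha_v>0$ into two strictly positive pieces so that each supremum is attained at $j_0$. That splitting, while possible since $\alpha>0$, is precisely the delicate point and is not merely bookkeeping; this is where the paper's route through Lemma~\ref{p_eq_q_hardy} carries more of the load.
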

\begin{proof}
Set $\lambda=\frac 1p-\frac 1q$, and define the quantity $p_1$ by
$\frac 1p=\frac{1-\lambda}{p_1}+\lambda$. Then $\frac 1q=
\frac{1-\lambda}{p_1}$. Applying the H\"{o}lder inequality, we get
$$
S:=\left(\sum \limits _{\xi\in {\bf V}({\cal A}_{\xi_*})}
w^q(\xi)\left(\sum\limits_{\xi_*\le \xi'\le
\xi}u(\xi')f(\xi')\right)^q \right)^{1/q}= $$$$=\left(\sum \limits
_{\xi\in {\bf V}({\cal A} _{\xi_*})} w^q(\xi)\left(\sum \limits
_{\xi_*\le\xi'\le \xi} u(\xi') f^{\frac pq}(\xi')f^{1-\frac
pq}(\xi')\right)^q\right)^{1/q}\le
$$
$$
\le \left(\sum \limits _{\xi\in {\bf V}({\cal A}_{\xi_*})}
w^q(\xi)\left(\sum \limits _{\xi_*\le\xi'\le \xi} u
^{\frac{1}{1-\lambda}}(\xi')f^{\frac{p}{q(1-\lambda)}}(\xi')
\right)^{(1-\lambda)q}\left(\sum \limits _{\xi_*\le \xi'\le \xi}
f^{\frac{1-\frac pq}{\lambda}}(\xi')\right)^{\lambda
q}\right)^{\frac 1q}\le
$$
$$
\le \left(\max _{\xi\in {\bf V}({\cal A}_{\xi_*})} \sum \limits
_{\xi'\le \xi} f^p(\xi')\right)^\lambda \left(\sum \limits
_{\xi\in {\bf V}({\cal A}_{\xi_*})} w^{\frac{p_1}{1-\lambda}}(\xi)
\left(\sum \limits _{\xi_*\le \xi'\le \xi} u^{\frac{1}{1-\lambda}}
(\xi') f^{\frac{p}{q(1-\lambda)}}(\xi') \right)^{p_1}\right)
^{\frac{1-\lambda}{p_1}}\le
$$
$$
\le \|f\|_{l_p({\cal A}_{\xi_*})}^{\lambda p} \left(\sum \limits
_{\xi \in {\bf V}({\cal A}_{\xi_*})} \tilde w^{p_1}(\xi)\left(\sum
\limits _{\xi_*\le\xi'\le \xi} \tilde u(\xi')\tilde
f(\xi')\right)^{p_1}\right)^{\frac{1-\lambda}{p_1}},
$$
with $\tilde w(\xi)=w^{\frac{1}{1-\lambda}}(\xi)$, $\tilde u(\xi)
=u^{\frac{1}{1-\lambda}}(\xi)$, $\tilde
f(\xi)=f^{\frac{p}{q(1-\lambda)}}(\xi)$. We have
$$
\|\tilde f\|_{l_{p_1}({\cal A}_{\xi_*})}^{1-\lambda} =\left(\sum
\limits _{\xi\in {\bf V}({\cal A}_{\xi_*})}
f^{\frac{pp_1}{q(1-\lambda)}} \right)^{\frac{1-\lambda}{p_1}}=
\left(\sum \limits _{\xi\in {\bf V}({\cal A}_{\xi_*})}
f^p(\xi)\right)^{\frac{1}{q}}=\|f\|_{l_p({\cal A}_{\xi_*})}^{\frac
pq}.
$$
Hence,
$$
S\le \|f\|_{l_p({\cal A}_{\xi_*})}^{1-\frac pq}\cdot
(\mathfrak{S}^{p_1,p_1}_{{\cal A}_{\xi_*},\tilde u,\tilde
w})^{1-\lambda}\|f\|_{l_p({\cal A}_{\xi_*})}^{\frac pq}=
(\mathfrak{S}^{p_1,p_1}_{{\cal A}_{\xi_*},\tilde u,\tilde
w})^{1-\lambda}\|f\|_{l_p({\cal A}_{\xi_*})}.
$$
If $\xi\in {\bf V}_j(\xi_0)$, then $$\tilde w(\xi)=2^{mj
\left(\frac{\beta_v}{1-\lambda}-\frac{d}{(1-\lambda)q}\right)}
\Psi_v^{\frac{1}{1-\lambda}}(2^{-mj})=2^{mj
\left(\frac{\beta_v}{1-\lambda}-\frac{d}{p_1}\right)}
\Psi_v^{\frac{1}{1-\lambda}}(2^{-mj}),$$ $$\tilde u(\xi)=2^{mj
\left(\frac{\beta_g}{1-\lambda}-\frac{d}{(1-\lambda)p'}\right)}
\Psi_g^{\frac{1}{1-\lambda}}(2^{-mj}).$$  Therefore,
$$
\tilde u(\xi)= 2^{mj\left(\tilde \beta_g-\frac{d}{p_1'}
\right)}\tilde \Psi_g(2^{-mj}), \quad \tilde w(\xi)=
2^{mj\left(\tilde \beta_v-\frac{d}{p_1}\right)}\tilde
\Psi_v(2^{-mj}),
$$
with $\tilde \Psi_g=\Psi_g^{\frac{1}{1-\lambda}}$, $\tilde
\Psi_v=\Psi_v^{\frac{1}{1-\lambda}}$, $\tilde
\beta_g=\frac{\beta_g}{1-\lambda}-\frac{d}{(1-\lambda)p'}+\frac{d}{p'_1}$
and $\tilde \beta_v=\frac{\beta_v}{1-\lambda}$. Then $-\tilde
\beta_vp_1+d-\theta\stackrel{(\ref{muck})}{>}0$.

In the case (\ref{beta}), a), we have $\beta <d+\frac dq-\frac
dp$. Check that $\tilde \beta_g+\tilde \beta_v<d$. It is
equivalent to
$\beta_g+\beta_v-\frac{d}{p'}+\frac{d(1-\lambda)}{p_1'}<d(1-\lambda)$,
i.e., $\beta -d+\frac dp-\frac{d(1-\lambda)}{p_1}<0$. It remains
to observe that $\frac{1-\lambda}{p_1}=\frac 1q$. Hence, by Lemma
\ref{p_eq_q_hardy}, $(\mathfrak{S}^{p_1,p_1}_{{\cal A},\tilde
u,\tilde w})^{1-\lambda}\underset{\mathfrak{Z}}{\lesssim}
2^{mj_0\left(\beta-d-\frac dq+\frac dp\right)} \Psi(2^{-mj_0})$.

Consider the case (\ref{beta}), b). Then $\beta=d+\frac dq-\frac
dp$. Hence, $\tilde \beta_g+\tilde \beta_v=d$, and by Lemma
\ref{p_eq_q_hardy} we get $(\mathfrak{S}^{p_1,p_1}_{{\cal
A},\tilde u,\tilde w})^{1-\lambda}\underset {\mathfrak{Z}}
{\lesssim} j_0^{-\alpha}\rho(j_0)$.
\end{proof}

\section{The proof of the embedding theorem}
In this section we prove the main result of this article. In
particular, we obtain Theorem \ref{main}.

Let $m_*=m_*(\mathfrak{Z})$ (see Lemmas \ref{p_eq_q_hardy},
\ref{pgq} and \ref{plq}), and let $\{({\cal D}_{j,i}, \, \hat
\xi_{j,i})\}_{j\in \Z_+, \, i\in \tilde I_j}$ be the partition of
${\cal T}$ for $m=m_*$ (see the definition on page \pageref{aij}).
\begin{Trm}
Let ${\cal D}\subset {\cal T}_{\hat \xi_{j_0,i_0}}$ be a subtree
with the minimal vertex $\hat \xi_{j_0,i_0}$. Then for any
function $f\in {\rm span}\, W^r_{p,g}(\Omega)$ there exists a
polynomial $Pf$ of degree not exceeding $r-1$ such that
\begin{align}
\label{pol_appr} \|f-Pf\|_{L_{q,v}(\Omega_{{\cal D},F})}
\underset{\mathfrak{Z}}{\lesssim} 2^{m_*j_0\left(\beta
-\delta\right)}\Psi(2^{-m_*j_0})\left\|\frac{\nabla^r
f}{g}\right\| _{L_p(\Omega_{{\cal D},F})}
\end{align}
in the case (\ref{beta}), a),
\begin{align}
\label{pol_appr1} \|f-Pf\|_{L_{q,v}(\Omega_{{\cal D},F})}
\underset{\mathfrak{Z}}{\lesssim} 2^{-m_*\theta\left(\frac
1q-\frac 1p\right)_+j_0}j_0^{-\alpha+\left(\frac 1q-\frac
1p\right)_+}\rho(j_0)\left\|\frac{\nabla^r f}{g}\right\|
_{L_p(\Omega_{{\cal D},F})}
\end{align}
in the case (\ref{beta}), b). Here the mapping $f\mapsto Pf$ can
be extended to a linear continuous operator
$P:L_{q,v}(\Omega)\rightarrow {\cal P}_{r-1}(\Omega)$.
\end{Trm}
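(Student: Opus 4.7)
The approach combines Reshetnyak's integral representation (Theorem \ref{reshteor}) with the discrete Hardy-type inequality on the tree ${\cal A}$ of Section~4 and the estimates of Sections~5--6. I first reduce to the case $r=d$: for $r<d$ the Riesz-type singularity of the Reshetnyak kernel is handled via Theorem \ref{adams_etc}, while for $r>d$ the kernel is bounded on the (bounded) flexible cone so no singularity appears. In both subcases the tree decomposition below governs the final summation, so I concentrate on $r=d$.

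I take $Pf$ to be an averaged Taylor polynomial of $f$ over the ball $B_{R_0/2}(x_*)$, where $x_*$ is the center of $F(\hat\xi_{j_0,i_0})$. This is an explicit linear functional of $f\in L^1_{\rm loc}$ which extends to a bounded operator $P:L_{q,v}(\Omega)\to{\cal P}_{r-1}(\Omega)$, because $B_{R_0/2}(x_*)$ is a fixed set separated from $\Gamma$ on which $v$ is bounded above and below. With this choice Theorem \ref{reshteor} gives
\[
f(x)-Pf(x)=\sum _{|\overline{\beta}|=r}\int _{\Omega _{{\cal D},F}}H_{\overline{\beta}}(x,y)\,\partial ^{\overline{\beta}}f(y)\,dy,
\]
where $H_{\overline{\beta}}(x,\cdot)$ is supported in the flexible cone $\bigcup_t B_{at}(\gamma_x(t))$ joining $x$ to $x_*$.

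Now decompose $\Omega_{{\cal D},F}$ into the pieces $\{\Omega_{{\cal D}_{j,i},F}\}$ from Lemma \ref{razb_der}. By Lemma \ref{cor_omega_t} together with (\ref{dist_x1}) and (\ref{diam_dj}), for $x\in \Omega_{{\cal D}_{j,i},F}$ the cone from $x$ to $x_*$ traverses precisely the pieces $\Omega_{{\cal D}_{j',i'},F}$ along the chain in ${\cal A}$ from $\hat\xi_{j_0,i_0}$ to $\hat\xi_{j,i}$, so the integral splits as a sum over this chain. On each piece $g(y)\asymp \varphi_g(2^{-m_*j'})$; writing $\partial^{\overline{\beta}}f=g\cdot(\nabla^r f/g)_{\overline{\beta}}$, applying H\"older with the volume estimate $|\Omega_{{\cal D}_{j',i'},F}|\lesssim 2^{-m_*j'd}$, and using the kernel bound, the $j'$-th contribution is at most $c\,u_{j'}\,\|\nabla^r f/g\|_{L_p(\Omega_{{\cal D}_{j',i'},F})}$, with $u_{j'}$ as in (\ref{ujwj11}) (the exponent $\beta_g-d/p'$ arising as $-\beta_g$ from $\varphi_g$ combined with $-d/p'$ from the volume H\"older exponent).

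Taking the $L_{q,v}$-norm on $\Omega_{{\cal D}_{j,i},F}$, where $v\asymp \varphi_v(2^{-m_*j})$, and summing over $(j,i)$ with ${\cal D}_{j,i}\subset{\cal D}$, the factor $w_j$ of (\ref{ujwj11}) emerges from combining $\varphi_v$ with the volume $2^{-m_*jd}$. Setting $a_{j,i}:=\|\nabla^r f/g\|_{L_p(\Omega_{{\cal D}_{j,i},F})}$ and identifying $\eta_{j,i}\leftrightarrow(j,i)$ in ${\cal A}$, the resulting bound is
\begin{gather*}
\|(f-Pf)v\|_{L_q(\Omega_{{\cal D},F})}\underset{\mathfrak{Z}}{\lesssim}\left(\sum _{(j,i)}w_j^q\Bigl(\sum _{(j',i')\le(j,i)}u_{j'}\,a_{j',i'}\Bigr)^q\right)^{1/q}\\
\le \mathfrak{S}^{p,q}_{{\cal A}_{\xi_*},u,w}\cdot\|\nabla^r f/g\|_{L_p(\Omega_{{\cal D},F})},
\end{gather*}
the final inequality using that the pieces $\Omega_{{\cal D}_{j,i},F}$ are pairwise non-overlapping. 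Plugging in the estimate for $\mathfrak{S}^{p,q}_{{\cal A}_{\xi_*},u,w}$ from Lemma \ref{p_eq_q_hardy}, \ref{pgq}, or \ref{plq} (depending on whether $p=q$, $p>q$, or $p<q$) yields (\ref{pol_appr}) in case (a) of (\ref{beta}) and (\ref{pol_appr1}) in case (b). The main obstacle is the geometric verification that the flexible cone intersects the decomposition $\{\Omega_{{\cal D}_{j',i'},F}\}$ exactly along the tree chain, so that the Reshetnyak integral decomposes cleanly into a discrete sum matching the form required by the tree Hardy inequality (\ref{hardy_pq}); once this is in hand, the algebra reduces to matching exponents of $2^{-m_*j}$ to the definitions of $u_j$ and $w_j$.
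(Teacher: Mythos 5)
Your $r=d$ argument is essentially the paper's Step~2 and is correct: the geometric fact that $G_x\subset\Omega_{\le\Delta}$ for $x\in\Delta$ (this is (\ref{x_in_del}), a direct consequence of assertion~2 of Lemma~\ref{cor_omega_t}) lets the Reshetnyak integral split along the chain in ${\cal A}$ from $\hat\xi_{j_0,i_0}$ to $\hat\xi_{j,i}$, and the H\"older-plus-volume computation recovers exactly the discrete weights $u_j,w_j$ of (\ref{ujwj11}), after which Lemmas~\ref{p_eq_q_hardy}, \ref{pgq}, \ref{plq} finish. The genuine gap is your claimed reduction of $r\ne d$ to $r=d$. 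The kernel factor $|x-y|^{r-d}$ does not pass cleanly through the tree decomposition: for $x\in\Omega[\eta_{j,i}]$ and $y\in\Omega[\eta_{j',i'}]$ with $j'\le j$, the quantity $|x-y|$ can range from near $0$ (adjacent pieces) up to $\asymp 2^{-m_*j'}$ (remote pieces), so a single treatment cannot simultaneously track the singularity and produce the $j'$-dependent factor you need in the weight. For $r<d$, invoking Theorem~\ref{adams_etc} alone gives only an unweighted $L_p\to L_q$ bound on the whole potential and loses the tree/weight structure. For $r>d$, bounding $|x-y|^{r-d}$ by a constant on the whole cone is not enough: the discrete Hardy lemmas are proved under (\ref{beta}) with $r=d$, i.e.\ $\beta_g+\beta_v\le d+\frac dq-\frac dp-\theta\left(\frac 1q-\frac 1p\right)_+$, while the theorem's hypothesis only guarantees $\beta_g+\beta_v\le r+\frac dq-\frac dp-\theta\left(\frac 1q-\frac 1p\right)_+$, which is strictly weaker when $r>d$; with the original $\beta_g,\beta_v$ the Hardy constants $\mathfrak{S}$ may simply not obey the bounds you are relying on.

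What the paper actually does for $r\ne d$ is to split the cone into a far part $G_x^1=\{y:\,|x-y|\ge 2\,{\rm dist}_{|\cdot|}(x,\Gamma)\}$ and a near part $G_x^2$. On $G_x^1$ one first proves $|x-y|\asymp{\rm dist}_{|\cdot|}(y,\Gamma)$, so $|x-y|^{r-d}$ is absorbed into the weight $g$, replacing $\beta_g$ by $\beta_{\tilde g}=\beta_g+d-r$; since $\beta_{\tilde g}+\beta_v-d-\frac dq+\frac dp=\beta-\delta$, this modified weight satisfies exactly the $r=d$ hypotheses, and the Step~2 estimate applies verbatim for both $r<d$ and $r>d$. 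On $G_x^2$ the domain of integration lies in a bounded number of neighbouring pieces $\tilde\Omega[\xi]$ around $x$; there one uses Theorem~\ref{adams_etc} together with H\"older to get the local Sobolev bound $\lesssim R_\xi^{\delta}\|\varphi\|_{L_p(\tilde\Omega[\xi])}$ with $R_\xi\asymp 2^{-m_*j}$, and the subsequent summation over $\xi$ uses only the bounded-overlap estimate (\ref{sum}) (no tree Hardy inequality at all) -- a sup bound if $p\le q$, a H\"older-plus-Lemma~\ref{sum_lem} computation if $p>q$. Without this near/far decomposition your proof does not cover $r\ne d$, which is the substantive part of Step~3.
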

\begin{proof}
We shall denote $\tilde \Omega=\Omega_{{\cal D},F}$.

{\bf Step 1.} The set $C^\infty(\tilde\Omega)\cap
W^r_{p,g}(\tilde\Omega)$ is dense $W^r_{p,g}(\tilde\Omega)$ (it
can be proved in the same way as for a non-weighted case, see
\cite[p. 16]{mazya1}).\footnote{Here $C^\infty(\Omega)$ is the
space of functions that are smooth on the open set $\Omega$, but
not necessarily extendable to smooth functions on the whole space
$\R^d$.} Therefore, it is sufficient to check (\ref{pol_appr}) and
(\ref{pol_appr1}) for $f\in C^\infty(\tilde\Omega)$.

By Lemma \ref{cor_omega_t}, $\tilde\Omega\in {\bf FC}(b_*)$,
$b_*=b_*(a, \, d)$. Let $x_*\in \tilde \Omega$, $\gamma_x(\cdot)$,
$T(x)$ be such as in Definition \ref{fca}, and let $R_0={\rm
dist}_{\|\cdot\|_{l_2^d}}\, (x_*, \, \partial \tilde\Omega)$. From
assertion 2 of Lemma \ref{cor_omega_t} it follows that we can take
the center of the cube $F(\hat \xi_{j_0,i_0})$ as the point $x_*$.

It is sufficient to show that if $f\in C^\infty(\Omega^{w_0})$,
$f|_{B_{R_0/2}(x_*)}=0$, then (\ref{pol_appr}), (\ref{pol_appr1})
hold with $Pf=0$ (the general case can be proved in the same way
as in \cite{vas_sing}; here we can take as $f\mapsto Pf$ the
Sobolev's projection operators).

Let $\varphi(x)=\frac{|\nabla^r f(x)|}{g(x)}$. By Theorem
\ref{reshteor}, for any $x\in \tilde\Omega$ there exists a set
$G_x\subset \cup _{t\in [0, \, T(x)]}B_{b_*t}(\gamma_x(t))$ such
that
$$
\{(x, \, y)\in \tilde\Omega \times \tilde\Omega:\; x\in \tilde
\Omega, \, y\in G_x\}\quad \text{ is measurable},
$$
$$
|f(x)|\underset{r,d,a}{\lesssim} \int \limits_{G_x}
|x-y|^{r-d}g(y)\varphi(y)\, dy.
$$
By Assertion 2 of Lemma \ref{cor_omega_t},
\begin{align}
\label{x_in_del} \text{ if }x\in \Delta,\;\; \Delta\in
\Theta(\Omega),\text{ then }G_x\subset \Omega_{\le \Delta}.
\end{align}

Thus, it is sufficient to prove that
\begin{align}
\label{tovqgf} \left(\int \limits _{\tilde \Omega}
v^q(x)\left(\int \limits _{G_x}g(y)\varphi(y)|x-y|^{r-d}\,
dy\right)^q\, dx\right)^{1/q}\underset{\mathfrak{Z}}{\lesssim}
C(j_0)\|\varphi\|_{L_p(\tilde \Omega)},
\end{align}
with $C(j_0)=2^{m_*j_0\left(\beta -\delta\right)}
\Psi(2^{-m_*j_0})$ in the case (\ref{beta}), a), and
$$C(j_0)=2^{-m_*\theta \left(\frac 1q-\frac
1p\right)_+j_0}j_0^{-\alpha+\left(\frac 1q-\frac
1p\right)_+}\rho(j_0)$$ in the case (\ref{beta}), b).

Extending the function $\varphi$ by zero to $\Omega_{{\cal
T}_{\hat \xi_{j_0,i_0}},F}\backslash \Omega_{{\cal D},F}$ and
applying the B. Levi's theorem, we may assume that ${\bf V}({\cal
D})=\{\xi\in {\bf V}({\cal T}_{\hat \xi_{j_0,i_0}}): \; \rho
_{{\cal T}}(\hat \xi_{j_0,i_0}, \, \xi)\le N\}$ for some $N\in
\N$.

{\bf Step 2.} Consider the case $r=d$. Let $({\cal A}, \, \xi_0)=
({\cal A}(m_*), \, \xi_0)$ be the tree defined on the page
\pageref{a_def}. If $\xi=\eta_{j,i} \in {\bf V}({\cal A})$, then
we set ${\cal D}[\xi]={\cal D}_{j,i}$, $\Omega[\xi]=\Omega_{{\cal
D}[\xi],F}$,
\begin{align}
\label{gxi} g_\xi=2^{\beta_gm_*j}\Psi_g(2^{-m_*j}),\;\;
v_\xi=2^{\beta_vm_*j} \Psi_v(2^{-m_*j}).
\end{align}
By (\ref{diam_dj}), the property 2 of the partition $\{{\cal
D}_{j,i}\}_{j\in \Z_+,\, i\in \tilde I_j}$ and (\ref{ghi_g0}), we
have
\begin{align}
\label{ddj} {\rm diam}\, \Omega[\xi] \underset{a,d}{\asymp}
2^{-m_*j}, \;\; g(x)\underset{\mathfrak{Z}}{\asymp} g_\xi, \; v(x)
\underset{\mathfrak{Z}}{\asymp} v_\xi, \;\; x\in \Omega_{{\cal
D}[\xi],F}.
\end{align}
Set $\xi_*=\eta_{j_0,i_0}$. Then
$$
\left(\int \limits _{\tilde \Omega}v^q(x)\left(\int \limits
_{G_x}g(y)\varphi(y)\, dy\right)^q\, dx\right)^{1/q}
\stackrel{(\ref{x_in_del})}{\le}
$$
$$
\le\left(\sum \limits _{\xi\in {\bf V}({\cal A}_{\xi_*})}\int
\limits _{\Omega[\xi]} v^q(x)\left(\sum \limits _{\xi_*\le\xi'\le
\xi} \int \limits _{\Omega[\xi']}g(y)\varphi(y)\, dy\right)^q\,
dx\right)^{1/q} \stackrel{(\ref{ddj})}{\underset{\mathfrak{Z}}
{\asymp}}
$$
$$
\asymp \left(\sum \limits _{\xi \in {\bf V}({\cal A} _{\xi_*})}
v_\xi^q \int \limits _{\Omega[\xi]} \left(\sum \limits
_{\xi_*\le\xi'\le \xi} g_{\xi'} \int \limits _{\Omega[\xi']}
\varphi(y)\, dy\right)^q\, dx\right)^{1/q}
\stackrel{(\ref{ujwj11}),(\ref{gxi}), (\ref{ddj})}
{\underset{\mathfrak{Z}}{\lesssim}}
$$
$$
\lesssim\left(\sum \limits _{\xi \in {\bf V}({\cal A} _{\xi_*})}
w^q(\xi)\left(\sum \limits _{\xi_*\le\xi'\le \xi}
u(\xi')\|\varphi\|_{L_p(\Omega[\xi'])}\right)^q\right)^{1/q}
\underset{\mathfrak{Z}}{\lesssim}
$$
$$
\lesssim C(j_0)\left( \sum \limits _{\xi\in {\bf V}({\cal
A}_{\xi_*})}\|\varphi\|^p _{L_p(\Omega[\xi])}\right)^{1/p}
=C(j_0)\|\varphi\| _{L_p(\tilde \Omega)}
$$
(the penultimate relation follows from Lemmas \ref{p_eq_q_hardy},
\ref{pgq} and \ref{plq}).

{\bf Step 3.} Let $r\ne d$. Set
$$
G_x^1=\{y\in G_x:\; |x-y|\ge 2\,{\rm dist}_{|\cdot|}(x, \,
\Gamma)\}, \;\; G_x^2=\{y\in G_x:\; |x-y|<2\,{\rm
dist}_{|\cdot|}(x, \, \Gamma)\}.
$$
Then in order to prove (\ref{tovqgf}) it suffices to check the
inequalities
\begin{align}
\label{tovqgf1} \left(\int \limits _{\tilde \Omega}
v^q(x)\left(\int \limits _{G_x^1}g(y)\varphi(y)|x-y|^{r-d}\,
dy\right)^q\, dx\right)^{1/q}\underset{\mathfrak{Z}}{\lesssim}
C(j_0)\|\varphi\|_{L_p(\tilde \Omega)},
\end{align}
\begin{align}
\label{tovqgf2} \left(\int \limits _{\tilde \Omega}
v^q(x)\left(\int \limits _{G_x^2}g(y)\varphi(y)|x-y|^{r-d}\,
dy\right)^q\, dx\right)^{1/q}\underset{\mathfrak{Z}}{\lesssim}
C(j_0)\|\varphi\|_{L_p(\tilde \Omega)}.
\end{align}

Prove (\ref{tovqgf1}). At first we check that for $y\in G_x^1$
\begin{align}
\label{xmyasdist} |x-y|\underset{a,d}{\asymp} {\rm
dist}_{|\cdot|}(y, \, \Gamma).
\end{align}
Indeed, let $z_x\in \Gamma$, $|x-z_x|={\rm dist}_{|\cdot|}(x, \,
\Gamma)$. Then
$$
{\rm dist}_{|\cdot|}(y, \, \Gamma)\le |y-z_x|\le
|y-x|+|x-z_x|=$$$$=|x-y|+ {\rm dist}_{|\cdot|}(x, \, \Gamma)\le
|x-y|+\frac{|x-y|}{2}=\frac{3|x-y|}{2}.
$$
Prove the inverse inequality. Let $y\in F(\omega)$, $\omega\in
{\bf V}({\cal T})$. From (\ref{x_in_del}) it follows that $x\in
\Omega _{{\cal T}_\omega,F}$. Since $\Omega _{{\cal
T}_\omega,F}\in {\bf FC}(b_*)$, we have ${\rm diam}(\Omega _{{\cal
T}_\omega,F})\underset{a,d}{\lesssim} 2^{-m_\omega}$. From
assertion 2 of Theorem \ref{whitney} it follows that ${\rm
dist}_{|\cdot|}(y, \, \partial \Omega) \underset{d}{\asymp}
2^{-m_\omega}$. Hence,
$$
{\rm dist}_{|\cdot|}(y, \, \Gamma)\ge {\rm dist}_{|\cdot|}(y, \,
\partial \Omega)\underset{d}{\asymp} 2^{-m_\omega}
\stackrel{(\ref{xxiw})}{\underset{a,d}{\gtrsim}} {\rm
diam}\,\Omega _{{\cal T}_\omega,F}\ge |x-y|.
$$
Thus, (\ref{xmyasdist}) is proved, and
$$
\left(\int \limits _{\tilde \Omega} v^q(x)\left(\int \limits
_{G_x^1} g(y)\varphi(y)|x-y|^{r-d}\, dy\right)^q\, dx\right)^{1/q}
\stackrel{(\ref{ghi_g0})}{\underset{\mathfrak{Z}} {\lesssim}}
$$
$$
\lesssim \left(\int \limits _{\tilde \Omega} v^q(x)\left(\int
\limits _{G^1_x} \tilde g(y)\varphi(y)\, dy\right)^q\,
dx\right)^{1/q},
$$
with $\tilde g(y)=\varphi_{\tilde g}({\rm dist}_{|\cdot|}(y, \,
\Gamma))$,
$$
\varphi_{\tilde g}(t)=\varphi_g(t)\cdot t^{r-d}=t^{-\beta_{\tilde
g}} \Psi_g(t), \;\; \beta_{\tilde g}=\beta_g+d-r.
$$
Since $\beta-\delta =\beta_g+\beta_v-r-\frac dq+\frac dp=\beta
_{\tilde g}+\beta_v-d-\frac dq+\frac dp$, it remains to apply the
estimate which was obtained at the previous step.

Prove (\ref{tovqgf2}). If $y\in G_x^2$, then
\begin{align}
\label{100} {\rm dist}_{|\cdot|}(y, \, \Gamma)\le {\rm
dist}_{|\cdot|}(x, \, \Gamma)+ |x-y|\le 3 \,{\rm
dist}_{|\cdot|}(x, \, \Gamma).
\end{align}
Let $x\in \Omega[\eta_{j,i}]$, $y\in \Omega[\eta_{j',i'}]$. From
(\ref{x_in_del}) and property 3 of the partition $\{{\cal
D}_{j,i}\}_{j\in \Z_+,i\in \tilde I_j}$ it follows that $j'\le j$.
By (\ref{dist_x1}), ${\rm dist}_{|\cdot|}(x, \,
\Gamma)\underset{a,d,m_*}{\asymp} 2^{-m_*j}$, ${\rm
dist}_{|\cdot|}(y, \, \Gamma)\underset{a,d,m_*}{\asymp}
2^{-m_*j'}$. This together with (\ref{100}) yield that there
exists $j_*=j_*(a, \, d, \, m_*)$ such that $j-j_*\le j'\le j$.
Notice that
\begin{align}
\label{xy2mj} |x-y|\underset{a,d,m_*}{\lesssim}2^{-m_*j}.
\end{align}

Denote by ${\cal I}_{\eta_{j,i},j_*}$ the maximal subgraph on the
vertex set
$$
{\bf V}({\cal I}_{\eta_{j,i},j_*})=\bigcup _{j'\ge j-j_*}\bigcup
_{\eta_{j',i'}\le \eta_{j,i}} {\bf V}({\cal D} _{\eta_{j',i'}})
$$
and set $\tilde\Omega[\eta_{j,i}]=\Omega_{{\cal
I}_{\eta_{j,i},j_*},F}$. Then for any $x\in \Omega[\eta_{j,i}]$,
the inclusion $G^2_x\subset \tilde \Omega[\eta_{j,i}]$ holds. In
addition, from (\ref{gxi}) and (\ref{ddj}) it follows that
\begin{align}
\label{gygxi} g(y)\underset{\mathfrak{Z}}{\asymp} g_{\eta_{j,i}},
\;\; y\in \tilde\Omega[\eta_{j,i}].
\end{align}
By (\ref{acvj}), for any $\xi'\in {\bf V}({\cal A}_{\xi_*})$
$${\rm card}\, \{\xi\in {\bf V}({\cal A}_{\xi_*}):\; \xi'\in {\cal
I}_{\xi,j_*}\} \underset{\mathfrak{Z}} {\lesssim} 1.$$ Therefore,
\begin{align}
\label{sum} \left(\sum \limits _{\xi\in {\bf V}({\cal A}_{\xi_*})}
\|\varphi\|^p_{L_p(\tilde \Omega[\xi])}\right)^{1/p}
\underset{\mathfrak{Z}}{\lesssim} \|\varphi\| _{L_p(\tilde
\Omega)}.
\end{align}

We have
$$
\left(\int \limits _{\tilde \Omega} v^q(x)\left(\int \limits
_{G_x^2} g(y)\varphi(y)|x-y|^{r-d}\, dy\right)^q\,
dx\right)^{1/q}\le
$$
$$
\le \left(\sum \limits _{\xi\in {\bf V}({\cal A}_{\xi_*})} \int
\limits _{\Omega[\xi]} v^q(x)\left(\int \limits _{\tilde
\Omega[\xi]} g(y)\varphi(y)|x-y|^{r-d}\, dy\right)^q\,
dx\right)^{1/q}\stackrel{(\ref{ddj}),(\ref{gygxi})}{\underset{\mathfrak{Z}}{\asymp}}
$$
$$
\asymp \left(\sum \limits _{\xi\in {\bf V}({\cal A}_{\xi_*})}
g_\xi^qv_\xi^q\int \limits _{\Omega[\xi]} \left(\int \limits
_{\tilde \Omega[\xi]} \varphi(y)|x-y|^{r-d}\, dy\right)^q\,
dx\right)^{1/q}=:S.
$$
Let $\xi=\eta_{j,i}$. By (\ref{ddj}) and (\ref{xy2mj}),
$\Omega[\xi]$ and $\tilde \Omega[\xi]$ are contained in a ball of
radius
\begin{align}
\label{omxidi} R_\xi \underset{a,d,m}{\asymp} 2^{-m_*j}.
\end{align}
Applying Theorem \ref{adams_etc} and the H\"{o}lder inequality, we
get
$$
S\underset{\mathfrak{Z}}{\lesssim} \left(\sum \limits _{\xi\in
{\bf V}({\cal A}_{\xi_*})}g_\xi^qv_\xi^q R_\xi^{\delta q}
\|\varphi\|^q_{L_p(\tilde \Omega[\xi])}\right)^{1/q}=:S_1.
$$
If $p\le q$, then
$$
S_1\le \max _{\xi\in {\bf V}({\cal A}_{\xi_*})} g_\xi v_\xi
R_\xi^\delta \left(\sum \limits _{\xi\in {\bf V}({\cal
A}_{\xi_*})} \|\varphi\|_{L_p(\tilde \Omega[\xi])}^p\right)^{\frac
1p}\stackrel{(\ref{gxi}), (\ref{sum}),
(\ref{omxidi})}{\underset{\mathfrak{Z}}{\lesssim}}
$$
$$
\lesssim 2^{m_*j_0(\beta-\delta)}
\Psi(2^{-m_*j_0})\|\varphi\|_{L_p(\tilde \Omega)} =C(j_0)
\|\varphi\|_{L_p(\tilde \Omega)}.
$$
If $p>q$, then by the H\"{o}lder inequality
$$
S_1\le \left(\sum \limits _{\xi \in {\bf V}({\cal
A}_{\xi_*})}\left(g_\xi v_\xi
R_\xi^\delta\right)^{\frac{pq}{p-q}}\right)^{\frac 1q-\frac 1p}
\left(\sum \limits _{\xi \in {\bf V}({\cal
A}_{\xi_*})}\|\varphi\|^p_{L_p(\tilde \Omega[\xi])}\right)^{\frac
1p} \stackrel{(\ref{gxi}),(\ref{acvj}), (\ref{sum}),
(\ref{omxidi})}{\underset{\mathfrak{Z}}{\lesssim}}
$$
$$
\lesssim \left(\sum \limits _{j=j_0}^\infty \frac{2^{-\theta
m_*j_0}\Lambda(2^{-m_*j_0})}{2^{-\theta m_*j}\Lambda(2^{- m_*j})}
2^{(\beta-\delta)\frac{pq}{p-q}m_*j} \Psi^{\frac{pq}{p-q}}
(2^{-m_*j})\right)^{\frac 1q-\frac 1p}\|\varphi\|_{L_p(\tilde
\Omega)} \stackrel{(\ref{beta})} {\underset{\mathfrak{Z}}
{\lesssim}} C(j_0)\|\varphi\| _{L_p(\tilde \Omega)}
$$
(see Lemma \ref{sum_lem}).
\end{proof}
\vskip 0.5cm Notice that if the condition (\ref{beta}), a) is
replaced by $\beta_g+ \beta_v>\delta-\theta\left(\frac 1q-\frac
1p\right)_+$, or if (\ref{beta}), b) holds and
$\alpha<(1-\gamma)\left(\frac 1q-\frac 1p\right)_+$, then the set
$W^r_{p,g}(\Omega)\cap C_0^\infty(\Omega)$ is unbounded in
$L_{q,v}(\Omega)$. Indeed, let $\varphi\in C_0^\infty([0, \,
1]^d)$, $\varphi\ge 0$, $\int \limits_{[0, \, 1]^d}|\nabla ^r
\varphi(x)|^p\, dx=1$, let $\hat k=\hat k(a, \, d)\in \N$ be such
as in Lemma \ref{nu_st}, and let $\hat \xi$ be the minimal vertex
of the tree ${\cal T}$, $\hat \xi\in \hat{\bf W}_{\nu_0}$. Then
for sufficiently large $\nu\in \N$
\begin{align}
\label{cwj} \sum \limits_{l=\nu}^{\nu+\hat k} {\rm card}\,
\hat{\bf W}_l \stackrel{(\ref{w_nu_d0_as})}
{\underset{\mathfrak{Z}}{\gtrsim}}
\frac{h(2^{-\nu_0})}{h(2^{-\nu})}\underset{\mathfrak{Z},\nu_0}{\gtrsim}
\frac{2^{\nu\theta}}{\Lambda(2^{-\nu})}.
\end{align}
Set $$\{\Delta_j\}_{j\in J_\nu}=\left\{F(\xi):\; \xi\in \cup
_{l=\nu}^{\nu+\hat k}\hat{\bf W}_l\right\}.$$ Then
$\Delta_j=z_j+t_j[0, \, 1]^d$, $t_j\stackrel{(\ref{mwas_kw}),
(\ref{h_w_nu})}{\underset{\mathfrak{Z}}{\asymp}} 2^{-\nu}$. In
addition, from (\ref{ghi_g0}) and (\ref{nw_dg}) it follows that
\begin{align}
\label{ggg} g(x)\underset{\mathfrak{Z}}{\asymp}
2^{\nu\beta_g}\Psi_g(2^{-\nu}), \quad v(x)\underset
{\mathfrak{Z}}{\asymp} 2^{\nu\beta_v}\Psi_v(2^{-\nu}), \quad x\in
\Delta_j, \quad j\in J_\nu.
\end{align}

Let $p\le q$. Take $j\in J_\nu$ and set
$\varphi_\nu(x)=c_\nu\varphi\left(\frac{x-z_j}{t_j}\right)$, with
$c_\nu>0$ such that $\left\|\frac{\nabla^r
\varphi_\nu}{g}\right\|_{L_p(\Omega)}=1$. Then $c_\nu
\stackrel{(\ref{ggg})}{\underset{\mathfrak{Z}}{\asymp}}
2^{\nu\beta_g}\Psi_g(2^{-\nu})2^{\nu\left(\frac dp -r\right)}$.
Hence,
$$
\|\varphi_\nu\| _{L_{q,v}(\Omega)} \stackrel
{(\ref{ggg})}{\underset{\mathfrak{Z}}{\asymp}} c_\nu\cdot
2^{\nu\beta_v} \Psi_v(2^{-\nu})\cdot 2^{-\frac{\nu d}{q}}
\underset{\mathfrak{Z}}{\asymp}
2^{\nu(\beta-\delta)}\Psi(2^{-\nu}).
$$
If $\beta-\delta>0$, then $\|\varphi_\nu\| _{L_{q,v}(\Omega)}
\underset{\nu\to\infty}{\to} \infty$. If $\beta=\delta$ and
$\alpha<0$, then $\|\varphi_\nu\| _{L_{q,v}(\Omega)}
\underset{\mathfrak{Z}}{\asymp} \nu^{-\alpha} \rho(\nu)
\underset{\nu\to\infty}{\to} \infty$.

Let $p>q$. First consider the case $\beta>\delta
-\theta\left(\frac 1q-\frac 1p\right)$. Set $\varphi_\nu(x)= c_\nu
\sum \limits_{j\in J_\nu} \varphi\left( \frac{x-z_j}{t_j}
\right)$, where $c_\nu>0$ is such that $\left\|\frac{\nabla^r
\varphi_\nu}{g}\right\|_{L_p(\Omega)}=1$. Then $c_\nu
\stackrel{(\ref{ggg})}{\underset{\mathfrak{Z}}{\asymp}}
2^{\nu\beta_g}\Psi_g(2^{-\nu})2^{\nu\left(\frac dp -r\right)}\cdot
({\rm card}\, J_\nu)^{-\frac 1p}$,
$$
\|\varphi_\nu\| _{L_{q,v}(\Omega)} \stackrel
{(\ref{ggg})}{\underset{\mathfrak{Z}}{\asymp}} c_\nu\cdot
2^{\nu\beta_v} \Psi_v(2^{-\nu})\cdot 2^{-\frac{\nu d}{q}} ({\rm
card}\, J_\nu)^{\frac 1q} \underset{\mathfrak{Z}}{\asymp}
2^{\nu(\beta-\delta)}\Psi(2^{-\nu}) ({\rm card}\, J_\nu)^{\frac
1q-\frac 1p} \stackrel{(\ref{cwj})}{\underset
{\mathfrak{Z}}{\gtrsim}}$$$$\gtrsim 2^{\nu\left(\beta-\delta+
\theta\left(\frac 1q-\frac 1p\right)\right)}\Psi(2^{-\nu})
\left(\Lambda(2^{-\nu})\right)^{\frac 1p-\frac 1q}.
$$
Therefore, $\|\varphi_\nu\| _{L_{q,v}(\Omega)}
\underset{\nu\to\infty}{\to} \infty$.

Let $\beta=\delta-\theta\left(\frac 1q-\frac 1p\right)$,
$\alpha<(1-\gamma)\left(\frac 1q-\frac 1p\right)$. For $s\in \N$
denote $${\cal N}_s=\{s+l(\hat k+1):\;\; l\in \Z_+, \;\; l(\hat
k+1)\le s\}.$$ Then ${\rm card}\, {\cal N}_s\underset{a,d}{\asymp}
s$. Set $\psi_s(x)=\sum \limits_{\nu\in {\cal N}_s} \sum \limits
_{j\in J_\nu} c_\nu \varphi\left(\frac{x-z_j}{t_j}\right)$, where
$c_\nu>0$ is such that $\left\|\frac{\nabla^r
\psi_s}{g}\right\|_{L_p(\Delta_j)} =({\rm card}\, J_\nu)^{-\frac
1p} ({\rm card}\, {\cal N}_s)^{-\frac 1p}$, $j\in J_\nu$, $\nu\in
{\cal N}_s$. Then $\left\|\frac{\nabla^r
\psi_s}{g}\right\|_{L_p(\Omega)}=1$ and
\begin{align}
\label{cnu} c_\nu
\stackrel{(\ref{ggg})}{\underset{\mathfrak{Z}}{\asymp}}
2^{\nu\beta_g} \Psi_g(2^{-\nu}) \cdot 2^{\nu\left(\frac dp
-r\right)}({\rm card}\, J_\nu)^{-\frac 1p} s^{-\frac 1p}
\end{align}
Hence,
$$
\|\psi_s\|_{L_{q,v}(\Omega)} \stackrel{(\ref{ggg})}
{\underset{\mathfrak{Z}}{\asymp}} \left(\sum \limits _{\nu \in
{\cal N}_s} 2^{\nu\beta_vq}\Psi_v^q(2^{-\nu})c_\nu^q\cdot 2^{-\nu
d}{\rm card}\, J_\nu\right)^{\frac 1q}
\stackrel{(\ref{phi_g}),(\ref{cwj}),(\ref{cnu})}
{\underset{\mathfrak{Z}}{\gtrsim}}
$$
$$
\asymp \left(\sum \limits _{\nu \in {\cal N}_s} 2^{\nu \beta
q}\Psi^q(2^{-\nu}) \cdot 2^{-\nu \delta q}\cdot 2^{\nu
\theta\left(1-\frac qp\right)} \nu^{-\gamma \left( 1-\frac
qp\right)}[\tau(\nu)]^{-1+\frac qp} s^{-\frac qp}\right)^{\frac
1q} \stackrel{(\ref{phi_g})}{\underset{\mathfrak{Z}}{\asymp}}$$$$
\asymp s^{-\alpha}\rho(s) \cdot s^{\left(\frac1q-\frac
1p\right)(1-\gamma)} [\tau(s)]^{-\frac 1q+\frac 1p}
\underset{s\to\infty}{\to} \infty.
$$

\begin{Biblio}
\bibitem{adams} D.R. Adams, ``Traces of potentials. II'', {\it Indiana Univ.
Math. J.}, {\bf 22} (1972/73), 907–918.
\bibitem{adams1} D.R. Adams, ``A trace
inequality for generalized potentials'', {\it Studia Math.} {\bf
48} (1973), 99–105.
\bibitem{and_hein} K.F. Andersen, H.P. Heinig, ``Weighted norm inequalities for
certain integral operators'', {\it SIAM J. Math. Anal.}, {\bf 14}
(1983), 834--844.
\bibitem{antoci} F. Antoci, ``Some necessary and some sufficient conditions for the compactness
of the embedding of weighted Sobolev spaces'', {\it Ricerche Mat.}
{\bf 52}:1 (2003), 55--71.
\bibitem{besov1} O.V. Besov, ``On the compactness of embeddings of weighted Sobolev spaces on a domain with irregular
boundary'', {\it Tr. Mat. Inst. im. V.A. Steklova, Ross. Akad.
Nauk}, {\bf 232} (2001), 72–93  [{\it Proc. Steklov Inst. Math.}
{\bf 232} (2001), 66–87].
\bibitem{besov2} O.V. Besov, ``Sobolev’s embedding theorem for a domain with irregular boundary,''
{\it Mat. Sb.} {\bf 192}:3 (2001), 3–26  [{\it Sb. Math.} {\bf
192} (2001), 323–346].
\bibitem{besov3} O.V. Besov, ``On the compactness of embeddings of weighted Sobolev
spaces on a domain with an irregular boundary,'' {\it Dokl. Akad.
Nauk} {\bf 376}:6 (2001), 727–732 [{\it Dokl. Math.} {\bf 63}:1
(2001), 95–100].
\bibitem{besov4} O.V. Besov, ``Integral estimates for differentiable functions on irregular domains,''
{\it Mat. Sb.} {\bf 201}:12 (2010), 69–82 [{\it Sb. Math.} {\bf
201} (2010), 1777–1790].

\bibitem{besov_il1} O.V. Besov, V.P. Il'in, S.M. Nikol'skii,
{\it Integral representations of functions, and imbedding
theorems}. ``Nauka'', Moscow, 1996. [Winston, Washington DC;
Wiley, New York, 1979].

\bibitem{m_bricchi1} M. Bricchi, ``Existence and properties of
h-sets'', {\it Georgian Mathematical Journal}, {\bf 9}:1 (2002),
13–-32.
\bibitem{m_bricchi} M. Bricchi, ``Compact embeddings between Besov spaces defined on
$h$-sets'', {\it Funct. Approx. Comment. Math.}, {\bf 30} (2002),
7--36.
\bibitem{caet_lop} A.M. Caetano, S. Lopes, ``Spectral theory for the fractal Laplacian
in the context of $h$-sets'', {\it Math. Nachr.}, {\bf 284}:1
(2011), 5--38.

%\bibitem{edm_ev_06} D.E. Edmunds, W.D. Evans, ``Spectral problems on arbitrary open subsets
%of $\R^n$ involving the distance to the boundary'', {\it Journal
%of Computational and Applied Mathematics}, {\bf 194}:1 (2006),
%36–53.

\bibitem{et1} D.E. Edmunds, H. Triebel, ``Spectral theory for isotropic fractal drums'',
{\it C. R. Acad. Sci. Paris S\'{e}r. I Math.}, {\bf 326} (1998),
1269–1274.
\bibitem{et2} D.E. Edmunds, H. Triebel, ``Eigenfrequencies of isotropic fractal drums'',
{\it Operator Theory: Advances and Applications}, {\bf 110}
(1999), 81–102.

\bibitem{edm_trieb_book} D.E. Edmunds, H. Triebel, {\it Function spaces,
entropy numbers, differential operators}. Cambridge Tracts in
Mathematics, {\bf 120} (1996). Cambridge University Press.

\bibitem{el_kolli} A. El Kolli, ``$n$-i\`{e}me \'{e}paisseur dans les espaces de Sobolev'',
{\it J. Approx. Theory}, {\bf 10} (1974), 268--294.

\bibitem{evans_har} W.D. Evans, D.J. Harris, ``Fractals, trees and the Neumann
Laplacian'', {\it Math. Ann.}, {\bf 296}:3 (1993), 493--527.

\bibitem{e_h_l} W.D. Evans, D.J. Harris, J. Lang, ``Two-sided estimates for the approximation
numbers of Hardy-type operators in $L_\infty$ and $L_1$'', {\it
Studia Math.}, {\bf 130}:2 (1998), 171–192.

\bibitem{ev_har_lang} W.D. Evans, D.J. Harris, J. Lang, ``The approximation numbers
of Hardy-type operators on trees'', {\it Proc. London Math. Soc.}
{\bf (3) 83}:2 (2001), 390–418.

\bibitem{ev_har_pick} W.D. Evans, D.J. Harris, L. Pick, ``Weighted Hardy
and Poincar\'{e} inequalities on trees'', {\it J. London Math.
Soc.}, {\bf 52}:2 (1995), 121--136.

\bibitem{gold_ukhl} V. Gol'dshtein, A. Ukhlov, ``Weighted Sobolev spaces and embedding theorems'',
{\it Trans. AMS}, {\bf 361}:7 (2009), 3829–3850.

%\bibitem{gur_kuf} P. Gurka, A. Kufner, ``A note on a two-sided Sobolev
%inequality'', {\it Approximation and function spaces} (Warsaw,
%1986), 169--172, Banach Center Publ., 22.

\bibitem{gur_opic1} P. Gurka, B. Opic, ``Continuous and compact imbeddings of weighted Sobolev
spaces. I'', {\it Czech. Math. J.} {\bf 38(113)}:4 (1988),
730--744.
\bibitem{gur_opic2} P. Gurka, B. Opic, ``Continuous and compact imbeddings of weighted Sobolev
spaces. II'', {\it Czech. Math. J.} {\bf 39(114)}:1 (1989),
78--94.
\bibitem{gur_opic3} P. Gurka, B. Opic, ``Continuous and compact imbeddings of weighted Sobolev
spaces. III'', {\it Czech. Math. J.} {\bf 41(116)}:2 (1991),
317--341.

\bibitem{har_piot} D.D. Haroske, I. Piotrowska, ``Atomic decompositions of function
spaces with Muckenhoupt weights and some relation to fractal
analysis'', {\it Math. Nachr.}, {\bf 281}:10 (2008), 1476--1494.
\bibitem{hein1} H.P. Heinig, ``Weighted norm inequalities for
certain integral operators, II'', {\it Proc. AMS}, {\bf 95}
(1985), 387--395.

\bibitem{jain1} Jain Pankaj, Bansal Bindu, Jain Pawan K., ``Continuous and compact imbeddings of weighted
Sobolev spaces'', {\it Acta Sci. Math. (Szeged)}, {\bf 66}:3--4
(2000), 665--677.
\bibitem{jain2} Jain Pankaj, Bansal Bindu, Jain Pawan K., ``Certain imbeddings of Sobolev
spaces with power type weights'', {\it Indian J. Math.}, {bf 44}:3
(2002), 303--321.
\bibitem{jain3} Jain Pankaj, Bansal Bindu, Jain Pawan K., ``Certain imbeddings of weighted Sobolev
spaces'', {\it Math. Ineq. Appl.}, {\bf 6}:1 (2003), 105--120.

\bibitem{kadl1} J. Kadlec, A. Kufner, ``Characterization of functions with zero traces
by integrals width weight functions, I'', {\it \v{C}asopis.
p\v{e}st. mat.}, {\bf 91} (1966), 463--471.
\bibitem{kadl2} J. Kadlec, A. Kufner, ``Characterization of functions with zero traces
by integrals width weight functions, II'', {\it \v{C}asopis.
p\v{e}st. mat.}, {\bf 92} (1967), 16--28.

\bibitem{kudrjavcev} L.D. Kudryavtsev, ``Direct and inverse imbedding theorems.
Applications to the solution of elliptic equations by variational
methods'', {\it Tr. Mat. Inst. Steklova}, {\bf 55} (1959), 3--182
[Russian].

\bibitem{kudr_nik} L.D. Kudryavtsev and S.M. Nikol’skii, ``Spaces
of differentiable functions of several variables and imbedding
theorems,'' in Analysis–3 (VINITI, Moscow, 1988), Itogi Nauki
Tekh., Ser.: Sovrem. Probl. Mat., Fundam. Napravl. 26, pp. 5–157;
Engl. transl. in Analysis III (Springer, Berlin, 1991), Encycl.
Math. Sci. 26, pp. 1–140.

\bibitem{kuf_cz} A. Kufner, ``Einige Eigenschaften der Sobolevschen R\"{a}ume
mit Belegungsfunktionen'', {\it Czech. Math. J.}, {\bf 15 (90)}
(1965), 597--620.
\bibitem{kufner_69} A. Kufner, ``Imbedding theorems for general Sobolev weight
spaces'', {\it Ann. Scuola Sup. Pisa}, {\bf 23} (1969), 373--386.

\bibitem{kufner} A. Kufner, {\it Weighted Sobolev spaces}. Teubner-Texte Math., 31.
Leipzig: Teubner, 1980.

\bibitem{kuf_op} A. Kufner, B. Opic, ``Remark on compactness of imbeddings in
weighted spaces'', {\it Math. Nachr.}, {\bf 133} (1987), 63--70.

\bibitem{j_lehrback} J. Lehrb\"{a}ck, ``Weighted Hardy
inequalities beyond Lipschitz domains'', arXiv:1209.0588v1.

\bibitem{leoni1} G. Leoni, {\it A first Course in Sobolev Spaces}. Graduate studies
in Mathematics, vol. 105. AMS, Providence, Rhode Island, 2009.

\bibitem{lifs_m} M.A. Lifshits, ``Bounds for entropy numbers for some critical
operators'', {\it Trans. Amer. Math. Soc.}, {\bf 364}:4 (2012),
1797–1813.

\bibitem{l_l} M.A. Lifshits, W. Linde, ``Compactness properties of weighted summation operators
on trees'', {\it Studia Math.}, {\bf 202}:1 (2011), 17--47.

\bibitem{l_l1} M.A. Lifshits, W. Linde, ``Compactness properties of weighted summation operators
on trees --- the critical case'', {\it Studia Math.}, {\bf 206}:1
(2011), 75--96.

\bibitem{liz_otel} P.I. Lizorkin and M. Otelbaev, ``Imbedding and compactness theorems for spaces
of Sobolev type with weights. I, II'', {\it Mat. Sb.} {\bf 108}: 3
(1979), 358–377  [Math. USSR Sb. {\bf 36}:3 (1980), 331–349]; Mat.
Sb. {\bf 112}:1 (1980), 56–85  [Math. USSR Sb. {\bf 40}:1 (1981),
51-–77].

\bibitem{mazya1} V.G. Maz’ja [Maz’ya], {\it Sobolev spaces} (Leningrad. Univ.,
Leningrad, 1985; Springer, Berlin–New York, 1985).

\bibitem{s_moura} S.D. Moura, {\it Function spaces of generalized
smoothness}. Dissertationes Math., 2001. 398:88 pp.

\bibitem{naim_sol} K. Naimark, M. Solomyak, ``Geometry of Sobolev spaces on regular trees and the Hardy
inequality'', {\it Russian J. Math. Phys.}, {\bf 8}:3 (2001),
322--335.

\bibitem{j_necas} J. Ne\v{c}as, ``Sur une m\'{e}thode pour r\'{e}soudre les equations aux
d\'{e}riv\'{e}es partielles dy type elliptique, voisine de la
varitionelle'', {\it Ann. Scuola Sup. Pisa}, {\bf 16}:4 (1962),
305--326.
\bibitem{i_piotr} I. Piotrowska, ``Traces on fractals of function spaces with Muckenhoupt
weights'', {\it Funct. Approx. Comment. Math.}, {\bf 36} (2006),
95--117.
\bibitem{i_piotr1} I. Piotrowska, ``Entropy and approximation numbers of embeddings
between weighted Besov spaces'', {\it Function spaces VIII},
173--185. Banach Center Publ., {\bf 79}, Polish Acad. Sci. Inst.
Math., Warsaw, 2008.

\bibitem{resh1} Yu.G. Reshetnyak, ``Integral representations of
differentiable functions in domains with a nonsmooth boundary'',
{\it Sibirsk. Mat. Zh.}, {\bf 21}:6 (1980), 108--116 (in Russian).
\bibitem{resh2} Yu.G. Reshetnyak, ``A remark on integral representations
of differentiable functions of several variables'', {\it Sibirsk.
Mat. Zh.}, {\bf 25}:5 (1984), 198--200 (in Russian).

\bibitem{sobol38} S.L. Sobolev, ``On a theorem of functional
analysis'', {\it Mat. Sb.}, {\bf 4} ({\bf 46}):3 (1938), 471--497
[{\it Amer. Math. Soc. Transl.}, ({\bf 2}) {\bf 34} (1963),
39--68.]

\bibitem{solomyak} M. Solomyak, ``On approximation of functions from Sobolev spaces on metric
graphs'', {\it J. Approx. Theory}, {\bf 121}:2 (2003), 199--219.

\bibitem{trieb_mat_sb} H. Triebel, ``Interpolation properties of $\varepsilon$-entropy
and widths. Geometric characteristics of function spaces of
Sobolev -- Besov type'', {\it Mat. Sbornik}, {\bf 98} (1975),
27--41.

\bibitem{triebel} H. Triebel, {\it Interpolation theory. Function spaces. Differential operators}
(Dtsch. Verl. Wiss., Berlin, 1978; Mir, Moscow, 1980).

\bibitem{tr_fract} H. Triebel, {\it Fractals and spectra}. Birkh\"{a}user,
Basel, 1997.

\bibitem{triebel1} H. Triebel, {\it Theory of function spaces III}. Birkh\"{a}user Verlag, Basel, 2006.
\bibitem{edm_ev_book} D.E. Edmunds, W.D. Evans, {\it Hardy Operators, Function Spaces and Embeddings}.
Springer-Verlag, Berlin, 2004.

\bibitem{triebel_fractal} H. Triebel, ``Approximation numbers in
function spaces and the distribution of eigenvalues of some
fractal elliptic operators'', {\it J. Approx. Theory}, {\bf 129}:1
(2004), 1--27.

\bibitem{turesson} B.O. Turesson, {\it Nonlinear potential theory and weighted Sobolev spaces}.
Lecture Notes in Mathematics, 1736. Springer, 2000.

\bibitem{vas_john} A.A. Vasil'eva, ``Widths of weighted Sobolev classes
on a John domain'', {\it Proceedings of the Steklov Institute of
Mathematics}, {\bf 280} (2013), 91–119.
\bibitem{vas_sing} A.A. Vasil'eva, ``Widths of weighted Sobolev classes
on a John domain: strong singularity at a point'' (submitted to
Revista Matematica Complutense).

\bibitem{g_yakovlev} G.N. Yakovlev, ``On a density of finite functions in weighted spaces'',
{\it Dokl. Akad. Nauk SSSR}, {\bf 170}:4 (1966), 797--798 [in
Russian].

\end{Biblio}
\end{document}